%
%
%
%
\documentclass{amsart}

\usepackage{amsmath, amssymb, amsthm, mathtools, amsfonts, bbm}
\usepackage[utf8]{inputenc}
\usepackage[all]{xy}
\usepackage{graphicx, epsfig, multirow}
\usepackage{enumerate, xcolor, color, enumitem}
\definecolor{darkviolet}{rgb}{0.58,0,0.83}
\usepackage[colorlinks,allcolors=blue]{hyperref} 
\usepackage{cleveref}

\usepackage{appendix,pdfsync}
\usepackage{float}
\usepackage{setspace}
\usepackage{xpatch}
\usepackage{subcaption}
\usepackage{pst-node}
\usepackage{tikz-cd}
\usetikzlibrary{positioning}
\usetikzlibrary{shapes,arrows}
\usepackage{tikz}   

\newtheorem{theorem}{Theorem}[section]
\newtheorem{lemma}[theorem]{Lemma}

\theoremstyle{definition}
\newtheorem{definition}[theorem]{Definition}

\newtheorem{claim}[theorem]{Claim}
\newtheorem{conj}[theorem]{Conjecture}
\newtheorem{proposition}[theorem]{Proposition}

\newtheorem{ques}[theorem]{Question}

\theoremstyle{remark}
\newtheorem{remark}[theorem]{Remark}

\numberwithin{equation}{section}




\newcommand{\lk}{\mbox{\upshape lk}\,}
\newcommand{\Sp}{\mathbb{S}}
\newcommand{\D}{\Delta}
\newcommand{\f}{\lfloor\frac{r}{2}\rfloor}
\newcommand{\cil}{\lceil\frac{r}{2}\rceil}
\newcommand{\M}{\mathcal{M}}
\newcommand{\G}{\mathcal{G}}
\newcommand{\cP}{\mathcal{P}}
\newcommand{\T}{\mathcal{T}}

\newcommand{\vr}[2]{\mathcal{VR}(#1;#2)}

\def\A{{\mathcal{A}}}

\def\R{{\mathbb{R}}}
\def\Z{{\mathbb{Z}}}

\def\n{{\mathrm{N}}}

\definecolor{blue1}{rgb}{0.10,0.60,0.8}
\definecolor{ss}{rgb}{0.16,0.01,0.40}
\definecolor{s}{rgb}{0.00,0.00,0.00}
\definecolor{p}{rgb}{0.00,0.00,0.79}
\definecolor{r}{rgb}{0.27,0.00,0.53}
\definecolor{t}{rgb}{0.22,0.00,0.44}
\definecolor{a}{rgb}{0.10,0.70,0.80}
\definecolor{an}{rgb}{0.60,0.00,1.00}
\definecolor{darkviolet}{rgb}{0.9, 0.0, 0.9}
\definecolor{yellow}{rgb}{0.7, 0.0, 0.6}

\begin{document}

\title{On the Vietoris-Rips Complexes of Integer Lattices}
\author{Raju Kumar Gupta}
\address{Institute of Mathematics, Slovak Academy of Sciences, Bratislava, Slovakia}
\email{rajukrg3217@gmail.com}
\thanks{The first author is supported by Ministry of Education of Slovak republic grant VEGA 2/0056/25.}

\author{Sourav Sarkar}
\address{Department of Mathematics, Indian Institute of Technology (IIT) Madras, India}
\email{sarkarsourav610@gmail.com}
\thanks{The second author is supported by the CRG Grant (CRG/2023/000239), India.}

\author{Samir Shukla}
\address{School of Mathematical and Statistical Sciences, Indian Institute of Technology (IIT) Mandi, India}
\email{samir@iitmandi.ac.in}
\thanks{ The third author is supported by the seed grant project IITM/SG/SMS/95 by IIT Mandi, India.}
\subjclass[2020]{Primary 55P10, 55N31, 20F65,  51M05; Secondary 57M07, 52C07}



\keywords{Vietoris-Rips complex, contractible, discrete-Morse theory, integer lattice, word metric}

\begin{abstract}
For a metric space $X$ and $r \geq 0$, the Vietoris-Rips complex $\vr{X}{r}$ is a simplicial complex whose simplices are finite subsets of $X$ with diameter at most $r$.  Vietoris-Rips complexes has applications in various places, including data analysis, geometric group theory, sensor networks, etc. Consider the integer lattice $\mathbb{Z}^n$ as a metric space equipped with the $d_1$-metric (the Manhattan metric or standard word metric in the Cayley graph).  Ziga Virk [Contractibility of the Rips complexes of integer lattices via local domination, Trans. Amer. Math. Soc. 378, no. 3, 1755–1770, 2025] proved that if  either $r \geq n^2(2n-1)$, or $n \in \{1, 2,3\}$ and $r \geq n$,   then the complex $\vr{\Z^n}{r}$ is contractible, and posed a question if $\vr{\Z^n}{r}$ is contractible for all $r \geq n$.    Recently, Matthew Zaremsky [Contractible  Vietoris–Rips Complexes  of  $\mathbb{Z}^n$, Proc. Amer. Math. Soc, 2025] improved Ziga's result and proved that  $\vr{\Z^n}{r}$ is contractible if $r \geq n^2+ n-1$.  Further, he  conjectured that $\vr{\Z^n}{r}$ is contractible for all $r \geq n$. We prove Zaremsky's conjecture for  $n \leq 5$, i.e, we prove that  $\vr{\Z^n}{r}$ is contractible if $n \leq 5$ and $r \geq n$. Further, we prove that $\vr{\Z^6}{r}$ is contractible for $r \geq 10$. 

We determine the homotopy type of $\vr{\Z^n}{2}$, and show that these complexes are homotopy equivalent to a wedge of countably infinite copies of $\Sp^3$. We also show that $\vr{\Z^n}{r}$ is simply connected for $r \geq 2$.

\end{abstract}

\maketitle

\section{Introduction}

For a metric space $(X, d)$ and $r\geq 0$, the Vietoris-Rips complex $\vr{X}{r}$ is a simplicial complex on $X$, where a finite set $\sigma \subseteq X$ is a simplex if and only if diameter of $\sigma$ is at most $r$, {\it i.e.,} $d(x, y) \leq r$ for all $x, y \in \sigma$.  The Vietoris-Rips complex was first discovered by Vietoris \cite{Vietoris27} to define a homology theory for metric spaces and independently re-descovered by E. Rips for studying hyperbolic groups, where it has been popularised as Rips-complex \cite{Ghys, Gromov87}.  One of the main motivations behind introducing these complexes was to create a finite simplicial model for metric spaces.  

Vietoris-Rips complexes  have been used  in  topological data analysis to probe the shape of a point cloud data using persistence homology 
\cite{Bauer2021, Carlsson2009,  ZomorodianCarlsson2005,Zomorodian2010}.  These complexes  have  been used heavily in computational topology, as a simplicial
model for point-cloud data \cite{Carlsson06, CarlssonZomorodian2005,CarlssonIshkhanovDeSilvaZomorodian2008,  DesilvaCarlsson2004} and as simplicial completions of
communication links in sensor networks \cite{DesilvaGhrist2006, DesilvaGhrist2007, Muhammad2007}.

In this article, we consider the Vietoris-Rips complexes of the integer lattice $\Z^n$ with the Manhattan metric $d$ (standard word metric in the Cayley graph), {\it i.e.,} for any $x = (x_1, \ldots, x_n)$ and $y = (y_1, \ldots, y_n) \in \Z^n$, $d(x, y) = \sum_{i = 1}^n |x_i - y_i|$.  One of the main motivations for our results in this article has a connection to the world of geometric group theory and topological
finiteness properties of groups. Recall that a group is of type $F_n$ if it admits a geometric (that is, proper and cocompact) action on an $(n-1)$-connected CW complex.  A group is of type $F_{\ast}$ if it admits a geometric action on a
contractible CW complex. Zaremsky \cite{Zaremsky2022} pointed out that an adequate understanding of the Vietoris–Rips complexes of a group $G$ with the word metric can reveal topological finiteness properties of $G$. Using the Brown’s Criterion\footnote{If a group acts properly on an $(n-1)$-connected CW complex $X$ with an invariant
	cocompact filtration $(X_t)_{t \in \R}$, then the group is of type $F_n$ if and only if this filtration is
	essentially $(n-1)$-connected, meaning for all $t$ there exists $s \geq t$ such that the inclusion
	$X_t \to X_s$ induces the trivial map in $\pi_k$ for $k\leq n-1$.} \cite{Brown1985}, he \cite[Lemma 3.6]{Zaremsky2022} proved that,  {\it $G$ (with the word metric corresponding to some finite generating
	set) is of type $F_n$ if and only if the filtration $(\vr{G}{t})_{t \in \R}$ is essentially $(n-1)$-connected. If some $\vr{G}{t}$ is contractible, then $G$ is of type $F_{\ast}$.}
Rips proved that if a hyperbolic group is equipped with a word metric, then for sufficiently large scale $r$, its Vietoris-Rips complex is contractible \cite[Proposition III.$\Gamma$.3.23]{Brid1999}.
Beyond the hyperbolic case, the contractibility of its Vietoris-Rips complexes is quite hard to prove.  It is clear from the definition that the Vietoris–Rips complex of bounded metric spaces is contractible for sufficiently large scale $r$. The contractibility of Vietoris–Rips complexes at large scales is less understood for unbounded metric spaces, even for simple examples such as integer lattices. For the  group $\Z^n$, the question of contractibility of $\vr{\Z^n}{r}$ was first posed by Zaremsky  in 2018 \cite{Zaremsky2022}: {\it  Are the Rips complexes of the
	free finitely generated Abelian groups (integer lattices in word metric)
	contractible for large scales?} This question remains open and had been attracting the attention of researchers for more than seven years. In \cite{Virk2025}, Virk introduced the local domination technique, and using it he proved that $\vr{\Z^n}{r}$ is contractible for $r \geq n^2(2n-1)$. In \cite{Zaremsky2025}, the author applied Bestvina--Brady discrete Morse theory and improved the bound to $r \geq n(n+1)-1$. Using local domination McCarty \cite[Theorem 3.1]{McCarty2025}  showed that $\vr{\Z^n}{r}$ is contractible for $ r \geq n(n+1)$.   In \cite{Zaremsky2025}, Zaremsky made the following conjecture (also posed as a question in \cite[Section 6]{Virk2025}). 

\begin{conj}[Zaremsky]\label{conj:main}
	For any $ r\geq n$, the Vietoris-Rips complex $\vr{\mathbb{Z}^n}{r}$ of $\Z^n$ with the Manhattan metric (standard word metric)  is contractible.  
\end{conj}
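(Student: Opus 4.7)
The approach is Bestvina-Brady discrete Morse theory, following and refining Zaremsky's framework. The plan is to define a $\Z$-valued Morse function on the vertex set $\Z^n$ whose descending links in $\vr{\Z^n}{r}$ are all contractible; combined with the $\Z^n$-translation action on the complex, this implies contractibility of the whole of $\vr{\Z^n}{r}$. The natural choice is $f(v) = \sum_i v_i$, refined by a lexicographic tiebreak so that no two vertices of a simplex share an $f$-value. Under this choice, the descending link at $v$ is (by translation invariance) the Vietoris-Rips complex at scale $r$ on a fixed subset $L \subset \Z^n$ of the Manhattan ball of radius $r$ about the origin, lying on one side of a ``lex-half-hyperplane'', and the whole problem reduces to showing $\vr{L}{r}$ is contractible whenever $r \geq n$.

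The strategy for this reduced problem combines local domination (in the sense of Virk) with an induction on $n$. The first move is to identify a candidate vertex $p \in L$ that lies within Manhattan distance $r$ of every vertex of $L$; then $p$ is a cone point of $\vr{L}{r}$ and contractibility follows immediately. A natural candidate is a coordinate-wise median of the extremal vertices of $L$, rounded to a lattice point by a fixed lex rule, or a suitably chosen ``corner'' of $L$. Zaremsky's argument picks such a pivot but with slack that forces $r \geq n(n+1) - 1$; to tighten the threshold to $r \geq n$ one must choose the pivot by a finer combinatorial rule that distinguishes parities of $r - n$ in order to control the rounding of the median in each coordinate.

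When no single global cone point exists, I would decompose $L$ into slices by the first coordinate (or an extremal linear functional) and induct on $n$, using contractibility of $\vr{\Z^{n-1}}{r}$ (and of Vietoris-Rips complexes on certain subsets thereof) for each slice, and then gluing via a nerve-lemma or Mayer-Vietoris argument over the slicing. The base cases $n \leq 3$ are already covered by Virk, and the inductive step for $4 \leq n \leq 5$ should be tractable via direct case analysis on two or three coordinates at a time.

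The main obstacle is the ``rounding budget'' in the pivot construction: each coordinate of the median pivot can contribute at most $1$ to the excess distance $d(p, u) - \mathrm{diam}(L)$, giving a total excess bounded by $n$, which is exactly the margin afforded by $r \geq n$. For $n \leq 5$ this tightness is verifiable by explicit case analysis, but at $n = 6$ extremal simplices fully saturate the budget in several coordinates at once and defeat the median rule, forcing a weaker bound (namely $r \geq 10$ in the paper's theorem) until a new ingredient is introduced. Proving the conjecture for arbitrary $n$ would presumably demand a Morse function more subtle than linear-plus-lex, one that inherently avoids these extremal rounding conflicts.
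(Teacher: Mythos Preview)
The statement is a conjecture, and neither the paper nor your proposal proves it in full; both reach only $n \leq 5$ (plus $n=6$ with $r \geq 10$). Your proposal correctly anticipates this outcome and the obstruction at $n=6$, but as written it is a plan rather than a proof: none of the case analysis is actually carried out.

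Your route also differs from the paper's. You propose Bestvina--Brady Morse theory with $f(v)=\sum_i v_i$ plus a lex tiebreak, then seek a single cone point (a median-type pivot) in each descending link --- essentially Zaremsky's framework, which yielded only $r \geq n^2+n-1$. The paper instead orders $\Z^n$ anti-lexicographically and removes vertices from the finite box $\{0,\ldots,m\}^n$ in that order; the link $\Gamma_n^{\alpha,r}$ of the vertex being removed is then \emph{not} coned off directly. The key idea (Lemmas~3.2--3.10) is to apply domination \emph{recursively inside the link}: one repeatedly strips dominated vertices from $\Gamma_n^{\alpha,r}$, shrinking it to progressively smaller induced flag subcomplexes satisfying tighter and tighter coordinate constraints ($|x_i| < \lfloor r/2\rfloor$, then $|x_i|+|x_j|\leq \lceil r/2\rceil$, then bounds on sums of three and four coordinates), until what remains \emph{is} a cone on a standard basis vector. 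This recursive reduction is exactly what defeats the rounding-budget obstacle you identify, because the eventual cone point need only lie within distance $r$ of the \emph{reduced} vertex set, not of the full link. Your slice-and-nerve alternative is plausible but unexecuted, and the distance from ``should be tractable via direct case analysis'' to the ten technical lemmas of Section~3 is considerable.
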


The Conjecture \ref{conj:main} is known to be true for $n \leq 3$ (see \cite{Virk2025, Wang2024}). One of the main results of this article is that the Conjecture \ref{conj:main} is true for $n \leq 5$. We also prove that $\vr{\Z^6}{r}$ is contractible for $r \geq 10$.

\begin{theorem} {\rm (Theorem \ref{maintheorem})}\label{thm:n<=5}
	For $n \leq 5$ and $r \geq n$, $\vr{\Z^n}{r}$ is contractible. 
\end{theorem}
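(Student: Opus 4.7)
Since the cases $n \leq 3$ are known from \cite{Virk2025, Wang2024}, the new substance of Theorem~\ref{thm:n<=5} lies in treating $n \in \{4, 5\}$ at the small scales $r = n, n+1, \ldots, n^2+n-2$ not covered by Zaremsky's bound, together with the auxiliary statement for $n = 6$, $r \geq 10$. My plan is to combine the two techniques already present in the literature: Virk's local domination and the Bestvina--Brady style discrete Morse theory developed by Zaremsky, and to push the combination harder than either gives on its own.

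I would begin by fixing a generic linear height function $h: \Z^n \to \Z$, say $h(x) = \sum_{i=1}^n N^{i-1} x_i$ with $N$ large enough that any two lattice points within a common $r$-ball have distinct heights. Treating $h$ as a discrete Morse function on $\vr{\Z^n}{r}$, the sublevel sets $\{h \leq t\}$ exhaust the complex, so the contractibility of $\vr{\Z^n}{r}$ reduces to the contractibility of each descending link $\mathrm{lk}^{\downarrow}(v)$. This descending link is a \emph{finite} subcomplex of the Vietoris--Rips complex on $B(v, r) \cap \{x : h(x) < h(v)\}$, so the problem becomes a finite combinatorial statement depending only on $n$ and $r$, uniform in $v$ by translation invariance.

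The core of the argument is then to verify the contractibility of these finite descending links at $r \geq n$ by iterated local domination: one exhibits an explicit chain of dominated vertices inside $\mathrm{lk}^{\downarrow}(v)$ that can be successively folded away, reducing the link to a cone on a single pivot. For $n \leq 5$ the resulting case analysis is small enough to be carried out by hand, with the pivots chosen as coordinate-shifts $v - e_i$ (or small sums thereof) in the direction where the descending half-ball has maximal spread. The main obstacle is the tightness of this step at the critical scale $r = n$: the descending half-ball essentially reaches its own far boundary, so no single cone point dominates every vertex, and one must chain together several Virk-style folds while checking at each stage that the resulting subcomplex is still dominated within scale $r$. Organising the chain so that it terminates in a manifestly contractible object, uniformly in $v$, is the delicate technical step. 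In dimension six the same procedure is obstructed at scales $r \in \{6, 7, 8, 9\}$ by a configuration that local domination alone cannot resolve, and this is exactly what forces the weaker bound $r \geq 10$ in the second part of the theorem; closing that gap appears to require a genuinely new idea beyond local domination plus Bestvina--Brady Morse theory.
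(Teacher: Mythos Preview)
Your plan is essentially the paper's own approach: the anti-lexicographic order plays exactly the role of your height function $h$ (your choice $h(x)=\sum N^{i-1}x_i$ with $N$ large \emph{is} the anti-lex order), the complexes $\Gamma_n^{\alpha,r}$ are precisely your descending links translated to the origin, and Lemmas~\ref{onevetex}--\ref{sumoffour2} carry out exactly the ``chain of dominated vertices'' you describe, progressively stripping the link down to a cone via iterated applications of Proposition~\ref{flag}. The substantive content, which your sketch leaves unspecified, is the explicit sequence of dominating vertices (the maps $x\mapsto\lambda^{[x;S]}$) together with the case-by-case verification of each inclusion $\n[x,\Delta]\subseteq\n[\lambda^{[x;S]},\Delta]$ at the tight scales---this is not a short check but a sustained chain of ten lemmas, and your intuition that the difficulty concentrates at $r=n$ and that an obstruction appears for $n=6$ below $r=10$ is borne out there.
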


\begin{theorem}{\rm (Theorem \ref{thm:n=6(Inside)})}\label{thm:n=6}
	$\vr{\Z^6}{r}$ is contractible for $r \geq 10$.
\end{theorem}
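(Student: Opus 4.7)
The plan is to push the inductive scheme underlying Theorem \ref{thm:n<=5} into dimension six, accepting the looser bound $r \geq 10$ in place of the conjectural $r \geq 6$. Writing $B_N := [-N, N]^6 \cap \Z^6$, the complex $\vr{\Z^6}{r}$ is the colimit of its finite restrictions $\vr{B_N}{r}$, so by the standard mapping telescope argument it suffices to construct, for each $N$, a deformation retraction $\vr{B_{N+1}}{r} \searrow \vr{B_N}{r}$ and then iterate.

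To produce these retractions I would use Virk's local domination technique. For every vertex $v \in B_{N+1} \setminus B_N$ the goal is to identify a dominator $u = u(v) \in B_N$ such that the closed link of $v$ in $\vr{B_{N+1}}{r}$ lies inside the closed star of $u$. A natural choice is to translate $v$ by one unit toward the origin along every coordinate for which $|v_i| = N+1$. Domination then reduces to the purely combinatorial claim that for every $w \in \Z^6$ with $d(v,w) \leq r$ one also has $d(u,w) \leq r$. A case analysis on the number of saturated coordinates of $v$, together with the direction in which a candidate $w$ is pushed inside the Manhattan $r$-ball around $v$, collapses this verification into a small family of extremal configurations.

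The main obstacle, and the true origin of the bound $r \geq 10$, lies in the worst of these extremal configurations: when many coordinates of $v$ are saturated in a common direction and $w$ is chosen antipodally in the Manhattan ball of radius $r$ around $v$, the slack $r - d(v,w)$ has to absorb the entire coordinate-wise displacement from $v$ to $u$, and this slack degrades rapidly as the number of saturated coordinates grows. Solving the resulting inequalities in $\Z^6$ pins the sharp threshold at $r = 10$; the analogous computation is tight at $r = n$ in dimensions $n \leq 5$ but forces $r \geq 10$ once $n = 6$, which is exactly why the full conjecture escapes this method in dimension six.

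Once the key estimate is established, the remaining work is organizational: one must order the removals of vertices in $B_{N+1} \setminus B_N$ so that each chosen dominator $u(v)$ is still present in the current complex at the moment $v$ is collapsed, and then check that these retractions assemble into a compatible system with respect to the exhaustion, so that contractibility passes to the colimit. Both steps are by now standard in the local domination literature, so the essential new content is the dimension-six combinatorial estimate that pins the scale down to $r = 10$.
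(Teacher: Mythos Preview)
Your proposal has a genuine gap: what you describe is ordinary domination, not local domination, and ordinary domination is known to fail for $n\geq 3$ (this is precisely Virk's observation, cited in the introduction). Concretely, take $n=3$, $r=3$, $N$ large, and $v=(N{+}1,0,0)\in B_{N+1}\setminus B_N$. Your rule gives $u=(N,0,0)$, but the neighbour $w=(N{+}1,3,0)$ of $v$ satisfies $d(u,w)=4>r$, so $u$ does not dominate $v$. You suggest that a clever ordering might save this, but the obstruction is symmetric: removing $w$ first would require the dominator $u'=(N,3,0)$, and now $v$ itself is a neighbour of $w$ with $d(u',v)=4>r$. One can push this further: using the four neighbours $(N{+}1,\pm 3,0)$ and $(N{+}1,0,\pm 3)$ of $v$, a triangle-inequality computation forces any dominator of $v$ inside $B_{N+1}$ to equal $v$ itself. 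So no ordering rescues single-vertex domination at the boundary, already in dimension three.

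The paper's proof is structurally different. It does not dominate boundary vertices at all; instead it removes vertices in anti-lexicographic order and, for each removed vertex, proves that its \emph{link} in the remaining complex is contractible (via Proposition~\ref{Link and Deletion}). Contractibility of the link is where all the work lies: it is established by a cascade of dominations \emph{inside the link} (Lemmas~\ref{onevetex}--\ref{sumoffour2}), progressively shrinking the link to a cone. The specific threshold $r\geq 10$ arises in Lemma~\ref{sumoffour2}, where one must remove from the link all vertices with $|x_i|+|x_j|+|x_k|+|x_l|\geq r-1$ on some four coordinates; the domination there forces inequalities such as $|x_p|+|x_t|\leq \lceil r/2\rceil$ with $|x_t|\geq 2$, which fail precisely when $r\leq 9$. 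Your proposal neither identifies this mechanism nor explains how the number $10$ would emerge from the boundary-domination picture you sketch.
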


Virk proved the Conjecture \ref{conj:main} for $n=1, 2$ using domination (see Definition \ref{def:domination}) and remarked \cite[Remark 3.3]{Virk2025}  that the domination cannot be used for $n \geq 3$. For $n =3$, he used the local domination technique to prove the Conjecture \ref{conj:main}.  
We observed that for $n \geq 3$, the domination cannot be used directly, but it can be used recursively in the links of the vertices chosen carefully in a certain order. To prove Theorems \ref{thm:n<=5} and \ref{thm:n=6}, we prove that the $\vr{\{0, 1, \ldots M\}^n}{r}$ is contractible for all $M$. We establish a series of  lemmas (Lemmas \ref{onevetex} to \ref{sumofthree} and  \ref{fourvertex} to \ref{sumoffour2}), which are true for any  $r \geq n \geq 2$ (except a few obvious lower bound conditions for $n$ and $r$, and the condition $n \geq 5$ and $r \geq 10$ in Lemma \ref{sumoffour2}). The idea is to reduce (without changing the homotopy type) the links of vertices (recursively by choosing an ordering of vertices) to a smaller induced subcomplex on the vertices, such that the sum of the absolute value of its coordinates has a fixed upper bound.  In Lemma \ref{sumoffour2}, we have deduced the condition that the sum of the absolute values of any of the $4$ coordinates is $\leq r-1$. We also believe that our proof strategy should work to fully settle Conjecture \ref{conj:main}. In particular, if  Lemma \ref{sumoffour2} can be generalized to $n-2$ coordinates (see Conjecture \ref{conj:future}), then we can prove the Conjecture \ref{conj:main} (see Section \ref{Conclusion} for the proof assuming that Conjecture \ref{conj:future} is true.).


In \cite{Virk2025}, the author also noted that the bound given in Conjecture \ref{conj:main} is optimal in the sense that $\vr{\Z^n}{r}$ is not contractible if $r < n$. In fact, it is shown in \cite{AdamsVirk2024} that $\vr{\{0, 1\}^n}{r}$ is not contractible for $r < n$, and in \cite{Virk2022Persistance} showed that the inclusion
$\{0, 1\}^n \to \Z^n$ induces an injection on the homology of Vietoris-Rips complexes
at each scale $r$.  The complex $\vr{\{0, 1\}^n}{r}$ has been paid a lot of attention in recent years \cite{Adamaszek2021,AdamsVirk2024,Martin2023,Ziqin2024,Shukla2023}. In \cite{Adamaszek2021}, authors proved that $\vr{\{0, 1\}^n}{2}$  is homotopy equivalent to wedge of 3-dimensional spheres $\Sp^3$'s. Since, there is an injection  $\tilde{H}_{\ast}(\vr{\{0, 1\}^n}{2}) \to \tilde{H}_{\ast}(\vr{\Z^n}{2})$, it is a natural question  to ask whether $ \vr{\Z^n}{2}$ is homotopy equivalent to wedge of $\Sp^3$'s. Motivated by this question, using (Forman's) discrete Morse theory, we prove the following.

\begin{theorem}{\rm (Theorem \ref{thm:(Z^n;2)})}\label{thm:r=2_Intro}
	For $n \geq 3$, $\vr{\Z^n}{2}$ is homotopy equivalent to wedge sum of countably infinite copies of $\Sp^3$'s.
\end{theorem}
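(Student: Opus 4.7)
The plan is to combine the separately established fact that $\vr{\Z^n}{r}$ is simply connected for all $r \geq 2$ with a Forman-type discrete Morse theory computation of the reduced homology. Since a simply connected CW complex whose reduced homology is free abelian and concentrated in a single degree $k$ is homotopy equivalent to a wedge of $k$-spheres (by Hurewicz and Whitehead), it suffices to produce an acyclic matching on the face poset of $\vr{\Z^n}{2}$ whose critical simplices are a single vertex (say $\{0\}$) together with a countably infinite collection of 3-simplices, with no critical cells in any other dimension.

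I would build the matching locally. First fix a well-ordering on $\Z^n$ (for instance the lex order on $(\|x\|_1, x_1, \ldots, x_n)$). For each simplex $\sigma$ with smallest vertex $v = \min \sigma$, the pairing will be determined by examining the translate $\sigma - v$, which lies inside the $d_1$-ball of radius $2$ around the origin. Because this ball contains only finitely many lattice points, only finitely many local configurations of $\sigma - v$ occur. The local rule will extend the discrete Morse matching used in \cite{Adamaszek2021} for $\vr{\{0,1\}^n}{2}$ to the full cross-polytope configuration around $0$, engineered so that unmatched (critical) configurations appear only in dimensions $0$ and $3$. Acyclicity of the global matching will then follow from a Patchwork-type argument applied to the poset map $\sigma \mapsto \min \sigma$ into the well-ordered $\Z^n$. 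The resulting CW model will have one 0-cell, no 1- or 2-cells, and countably many 3-cells (one per translate of each basic critical configuration under $\Z^n$); since all attaching maps are necessarily null-homotopic, the space is a wedge of 3-spheres.

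The hard part will be engineering the local rule so that it exhausts every simplex in dimensions different from $0$ and $3$. Because $\vr{\Z^n}{2}$ supports simplices of dimension as large as $2n$ (for example, the full cross-polytope $\{0, \pm e_1, \ldots, \pm e_n\}$ at any lattice point is a $2n$-simplex), many high-dimensional cells must be cancelled against cells one dimension lower, while the pairing simultaneously leaves exactly the desired collection of critical 3-simplices unmatched. As a sanity check, the output should be compatible with the inclusion-induced injection $\widetilde{H}_\ast(\vr{\{0,1\}^n}{2}) \hookrightarrow \widetilde{H}_\ast(\vr{\Z^n}{2})$: the critical 3-simplices should arise as $\Z^n$-translates of the generators of $H_3(\vr{\{0,1\}^n}{2})$ identified in \cite{Adamaszek2021}, which immediately gives countable rank.
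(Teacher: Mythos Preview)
Your high-level strategy matches the paper's: use discrete Morse theory to show the reduced homology is free and concentrated in degree $3$, combine with simple connectivity, and invoke Moore-space uniqueness. The paper, however, executes this on the finite boxes $\Delta_m^{n,2}=\vr{\{0,\dots,m\}^n}{2}$ rather than on $\vr{\Z^n}{2}$ directly: it first classifies the maximal simplices of $\vr{\Z^n}{2}$ (three types: closed neighborhoods $N[x]$, squares $\{x,x^{i},x^{j},x^{i,j}\}$, and tetrahedra $\{x,x^{i,j},x^{j,k},x^{i,k}\}$), then applies the standard sequential element matching with respect to the anti-lexicographic vertex order and checks by exhaustive case analysis against these three types that no critical cells occur in dimensions $0,1,2$ or $\ge 4$. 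Homology is then read off by passing to the direct limit over $m$.

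Your proposed execution has two genuine gaps. First, the ``local rule'' you defer is the entire technical content of the argument; the paper devotes four lemmas to precisely this case analysis, and it does not follow from the Adamaszek--Adams matching on $\{0,1\}^n$, since the local picture around a lattice point is a cross-polytope rather than a cube face. Second, your Patchwork setup is internally inconsistent: you want the pairing on the fiber over $v=\min\sigma$ to depend only on the translate $\sigma-v$, but with any non-translation-invariant order on $\Z^n$ (your suggested one included) the set of neighbors of $v$ that are $\prec v$---hence the set of vertices you may toggle while keeping $\min\sigma=v$---varies with $v$. A rule reading only $\sigma-v$ cannot see this, so either matched pairs leak across fibers (violating the Patchwork hypothesis) or the rule must also take as input the local order pattern at $v$, at which point you are back to a case analysis of the paper's sort. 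Working on a finite box, as the paper does, sidesteps this: the sequential element matching is automatically acyclic there, and the boundary effects can be handled directly.
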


We also prove that $\vr{\Z^n}{r}$ is simply connected for $r \geq 2$. 

\begin{theorem}{(Theorem \ref{theorem:simplyconnected_inside})}\label{thm:simplyconnected_intro}
	$\vr{\Z^n}{r}$ is simply connected for $r \geq 2$. 
	
\end{theorem}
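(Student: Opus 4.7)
The plan is to reduce the problem to the fundamental group of an explicit simply-connected subcomplex, namely a triangulation of the $2$-skeleton of the standard cubical CW-structure on $\R^n$. Set $X = \vr{\Z^n}{r}$; the case $n = 1$ is immediate since $\vr{\Z}{r}$ is already known to be contractible, so we assume $n \geq 2$.

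Inside $X$ we define a subcomplex $Z$ whose simplices are: all vertices of $\Z^n$; all unit edges $\{v, v + e_i\}$; all ``diagonal'' edges $\{v, v + e_i + e_j\}$ for $1 \leq i < j \leq n$; and, for each $v \in \Z^n$ and each such pair $(i,j)$, the two triangles $\{v, v + e_i, v + e_i + e_j\}$ and $\{v, v + e_j, v + e_i + e_j\}$. All of these simplices have diameter at most $2 \leq r$, so $Z \subseteq X$. Geometrically, $|Z|$ is a simplicial triangulation of the $2$-skeleton of the standard cubical decomposition of $\R^n$: each unit $2$-face, parametrised uniquely by its minimal vertex $v$ and an ordered pair $i < j$, is split along the diagonal $\{v, v + e_i + e_j\}$ into the above two triangles. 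By cellular approximation the inclusion of the $2$-skeleton into $\R^n$ induces an isomorphism on $\pi_1$, hence $\pi_1(|Z|) = \pi_1(\R^n) = 0$ and $Z$ is simply connected.

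Next we claim that any loop in $X$ is homotopic in $X$ to a loop lying in the $1$-skeleton $Z^{(1)}$. By cellular approximation it suffices to consider loops in $X^{(1)}$, and the key observation is an \emph{edge subdivision} fact: every edge $\{x, y\}$ of $X$ is homotopic rel endpoints in $X^{(2)}$ to a path built from unit edges only. This is proved by induction on $k = d(x, y)$. For $k = 1$ nothing is needed; otherwise choose $z \in \Z^n$ with $d(x, z) = 1$ and $d(z, y) = k - 1$ lying on a lattice geodesic from $x$ to $y$. Since $d(x, y) \leq r$, the triple $\{x, z, y\}$ has diameter $\leq r$ and is therefore a $2$-simplex of $X$; this gives a homotopy of $\{x, y\}$ to the concatenation $\{x, z\} \ast \{z, y\}$ rel endpoints. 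The shorter edge $\{z, y\}$ is then inductively replaced by a unit-edge path.

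Putting the two ingredients together: an arbitrary loop $\gamma$ in $X$ can be homotoped, edge by edge, to a loop $\gamma'$ supported on unit edges, i.e.\ a loop in $Z^{(1)}$. Since $\pi_1(Z) = 0$, $\gamma'$ is null-homotopic in $Z$, and therefore in $X$. Hence $\pi_1(X) = 0$. The only real subtlety is verifying that the local diagonalisations used to construct $Z$ assemble into a globally well-defined triangulation of the cubical $2$-skeleton, which is the main potential obstacle; it is handled by observing that each unit $2$-face of $\R^n$ has a unique minimal vertex, so the diagonalisation is canonical and distinct squares share only their common boundary edges. The edge-subdivision induction and the use of cellular approximation are both entirely standard.
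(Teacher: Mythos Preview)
Your proof is correct and genuinely more elementary than the paper's. Both arguments begin identically: cellular approximation reduces an arbitrary loop to an edge path in $X^{(1)}$, and the triangle $\{x,z,y\}$ trick subdivides each edge into a concatenation of unit edges. The divergence is in how one shows that a unit-edge loop is null-homotopic. The paper observes that any finite unit-edge loop lies in some $\Delta_m^{n,2} = \vr{\G_m^n}{2}$ and then invokes Theorem~\ref{theorem:finitecase}, which was established via the full discrete Morse theory analysis (Lemmas~\ref{lemma:critical_1}--\ref{lemma:critical_4 or more}) showing $\Delta_m^{n,2}\simeq\bigvee\Sp^3$. You instead exhibit an explicit simply-connected subcomplex $Z$---a simplicial triangulation of the cubical $2$-skeleton of $\R^n$---containing all unit edges, and appeal to $\pi_1(\R^n)=0$ via cellular approximation.

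Your route is self-contained and avoids the Morse-theoretic machinery entirely, which is a clear gain if one wants simple connectivity on its own. The paper's route is essentially free given the work already done toward Theorem~\ref{thm:(Z^n;2)}, and has the incidental benefit of establishing the stronger fact that $\D_m^{n,2}$ is $2$-connected. Your verification that the diagonalisation of each unit square by its minimal vertex yields a globally consistent simplicial complex is correct: distinct squares intersect only in unit edges or vertices, so no incompatibilities arise.
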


The article is organized as follows. In Section \ref{Preliminaries}, we recall the necessary definitions and basic results used throughout the paper. Section \ref{sec:vr1} is devoted to the study of $\vr{\mathbb{Z}^n}{r}$ for $r \geq n$, where we present several observations, establish key lemmas, and prove our main result on the contractibility Theorems \ref{thm:n<=5} and \ref{thm:n=6}. In Section \ref{sec:vr2}, we first characterize the maximal simplices of $\vr{\mathbb{Z}^n}{2}$ for $n \geq 3$, and then determine its homotopy type using discrete Morse theory. Finally, in Section \ref{Conclusion}, we summarize our contributions and discuss directions for future research. In particular, we prove Conjecture \ref{conj:main} under the assumption of Conjecture \ref{conj:future}. We propose questions and a conjecture that naturally arise from this work.

\section{Preliminaries}\label{Preliminaries}
A {\em graph} $G$ is an ordered pair $(V(G), E(G))$, where $V(G)$ is a finite set called the {\em vertex set}, and $E(G) \subseteq \binom{V(G)}{2}$ is a set of $2$-element subsets of $V(G)$, called the {\em edge set} of $G$. A {\em subgraph} of $G$ is a graph $H$ such that $V(H) \subseteq V(G)$ and $E(H) \subseteq E(G)$. The {\em induced subgraph} of $G$ on a subset $W \subseteq V(G)$, denoted by $G[W]$, is the graph with vertex set $W$ and edge set $\{\{u,v\} \in E(G) : u,v \in W\}$. For more details on graph-related terminologies, we refer to \cite{Douglas}.

An {\em (abstract) simplicial complex} $\Delta$ on a vertex set $V$ is a collection of finite subsets of $V$ such that if $\sigma \in \Delta$ and $\tau \subseteq \sigma$, then $\tau \in \Delta$. If $\sigma \in \Delta$ and $\mathrm{Card}(\sigma) = k+1$, then $\sigma$ is called a simplex of dimension $k$, or a $k$-simplex, here $\mathrm{Card}(S)$ denote the cardinality of the set $S$.  We assume that every simplicial complex contains the empty set as the simplex of dimension $-1$. The $0$-dimensional simplices are referred to as the {\em vertices} of $\Delta$, and the set of all vertices in $\Delta$ is denoted by $V(\Delta)$. 

A {\em subcomplex} of a simplicial complex is a subcollection of simplices that also forms a simplicial complex. For a subset  $S\subseteq V(\D)$, $\mathrm{Ind}_{\D}(S)=\{\sigma  \cap S : \sigma \in \D\}$ is called the {\em induced subcomplex} of $\D$ induced on the vertex set $S$.

If $\Delta$ is a simplicial complex and $\sigma \in \Delta$, then the {\em link}  of $\sigma$ is a subcomplex, defined as follows
$\lk(\sigma,\D) := \{\tau \in \Delta : \sigma \cup \tau \in \Delta \text{ and } \sigma \cap \tau = \emptyset\}$. The {\em deletion} of $\sigma$ is defined as the subcomplex  $\{\tau\in\Delta : \sigma\nsubseteq\tau\}$ and is denoted by $\text{del}(\sigma,\D)$.

In this article,  we consider any simplicial complex as a topological space, namely its geometric realization. For the definition of
geometric realization and details about terminologies related to simplicial complexes, we refer the reader to \cite{Kozlovbook}. For details on homotopy theory and related topological concepts,  we refer to \cite{Hatcher02}.

\begin{proposition}[{\rm Lemma 2.5, \cite{HenryShuklaAnurag2025}}]\label{Link and Deletion} 
    Let $v$ be a vertex of a simplicial complex $K$. If the inclusion $\lk(v,K) \xhookrightarrow {} K \setminus v$ is null-homotopic, then we have
    $$K\simeq (K \setminus v) \vee \Sigma \lk(v,K).$$

    Here, $K \setminus v = \text{del}(v,K)$  is the deletion of $v$, and $\Sigma X$ denote the suspension of the space $X$.
\end{proposition}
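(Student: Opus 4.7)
The plan is to realize $K$ as a pushout built from the closed star and the deletion of $v$, and then exploit the hypothesis of null-homotopy to split off a suspension. First I would observe that $K = \st(v,K) \cup (K \setminus v)$ with intersection exactly $\lk(v,K)$, and that $\st(v,K)$ is the cone over $\lk(v,K)$ with apex $v$, hence contractible. Since the inclusion of any subcomplex is a cofibration, the pair $(K, \st(v,K))$ satisfies the homotopy extension property, and collapsing the contractible subcomplex $\st(v,K)$ yields a homotopy equivalence $K \simeq K / \st(v,K)$.

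Next I would identify this quotient with a mapping cone. Outside the star, $K$ and $K \setminus v$ agree, so collapsing $\st(v,K)$ inside $K$ has the same effect as collapsing $\lk(v,K)$ inside $K \setminus v$. Thus
\[
K \simeq K/\st(v,K) \cong (K\setminus v)/\lk(v,K),
\]
and the latter is (up to homotopy) the mapping cone $C_\iota = (K\setminus v)\cup_\iota C\,\lk(v,K)$ of the inclusion $\iota : \lk(v,K) \hookrightarrow K\setminus v$, because the quotient by a cofibration is homotopy equivalent to the mapping cone.

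Finally I would invoke the standard mapping-cone principle: if $\iota : A \to X$ is null-homotopic and a cofibration, then $C_\iota \simeq X \vee \Sigma A$. Indeed, a null-homotopy of $\iota$ extends via the cofibration property to a homotopy of the attaching map to a constant map, and attaching a cone along a constant map produces the unreduced suspension wedged to $X$ at the basepoint. Applying this with $A = \lk(v,K)$ and $X = K \setminus v$ gives $K \simeq (K \setminus v) \vee \Sigma\,\lk(v,K)$, as desired. The main technical point is the last step, where one must be careful that homotopy invariance of the mapping cone requires the cofibration hypothesis; this is automatic for simplicial pairs, so no further work is needed in our setting.
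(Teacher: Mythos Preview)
Your argument is correct and follows the standard route: decompose $K$ as the pushout of $\st(v,K)$ and $K\setminus v$ along $\lk(v,K)$, collapse the contractible star to identify $K$ with the mapping cone of the inclusion $\iota:\lk(v,K)\hookrightarrow K\setminus v$, and then use that the mapping cone of a null-homotopic cofibration splits as a wedge with a suspension. Each step is justified, and your remark that the cofibration hypothesis is automatic for simplicial pairs is exactly the point that makes the homotopy invariance of the mapping cone go through.

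Note, however, that the paper does not supply its own proof of this proposition: it is quoted verbatim as Lemma~2.5 of \cite{HenryShuklaAnurag2025} and used as a black box. So there is nothing in the present paper to compare your argument against. Your proof is precisely the kind of argument one expects to find in the cited source (and is essentially the proof given there), so in that sense your approach aligns with the intended one.
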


\begin{definition}(Domination)\label{def:domination}
	\rm{   Let $K$ be a simplicial complex and let $a, b \in V(K)$ with $a \neq b$. We say that the vertex $a$ is {\it dominated} by $b$, if for every simplex $\sigma \in K$ containing $a$, the set $\sigma \cup \{b\}$ is also a simplex in $K$. }
    \end{definition}

For a simplicial complex $K$  and a vertex $v$, we define the {\it open neighborhood} of $v$ as $\n(v, K)=\{u\in V(K): \{u,v\}\in K\}$, and the {\it closed neighborhood} as $\n[v, K]=\n(v, K)\cup\{v\}$.
    
A simplicial complex $K$ is called {\it flag} if for any $\sigma \subseteq V(K)$, $\sigma \in K$ if and only if $\{u, v\} \in K$ for any $u, v \in \sigma$.  	Observe that if $K$ is a flag simplicial complex and $\n[a, K]\subseteq \n[b, K]$, then $a$ is dominated by $b$. From \cite[Proposition 3.2]{Prisner1992}, we have following.

\begin{proposition}\label{flag}
	Let $K$ be a simplicial complex and $a,b\in V(K)$ such that $a \neq b$. Let $\n[a, K]\subseteq \n[b, K]$. If $K$ is a flag complex, then $K \simeq K \setminus a$.
\end{proposition}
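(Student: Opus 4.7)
The plan is to apply Proposition \ref{Link and Deletion} to the vertex $a$. That proposition needs two inputs: (i) the inclusion $\lk(a, K) \hookrightarrow K \setminus a$ is null-homotopic, and (ii) the term $\Sigma \lk(a, K)$ is contractible so that the wedge $(K \setminus a) \vee \Sigma \lk(a, K)$ collapses to $K \setminus a$. Both fall out at once if I can show that $\lk(a, K)$ is itself contractible, and in fact I expect the vertex $b$ to be a cone apex of $\lk(a, K)$.

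To verify this, I pick an arbitrary $\sigma \in \lk(a, K)$ and aim to prove $\sigma \cup \{b\} \in \lk(a, K)$, which amounts to $\sigma \cup \{a, b\} \in K$. From $\sigma \in \lk(a, K)$, the set $\sigma \cup \{a\}$ is already a simplex, so every pair of its vertices is an edge of $K$. Applying the hypothesis $\n[a, K] \subseteq \n[b, K]$ to $a \in \n[a, K]$ (and using $a \neq b$) gives $a \in \n(b, K)$, i.e., $\{a, b\} \in K$. For any other $v \in \sigma$ we have $v \in \n(a, K) \subseteq \n(b, K)$, hence $\{v, b\} \in K$. Thus every $2$-element subset of $\sigma \cup \{a, b\}$ is an edge of $K$, and the flag hypothesis promotes this to the full simplex $\sigma \cup \{a, b\} \in K$, as required. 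Hence $b$ is a cone apex of $\lk(a, K)$, so $\lk(a, K)$ is contractible.

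Contractibility of $\lk(a, K)$ immediately gives (ii), and (i) follows because any continuous map out of a contractible space is null-homotopic. Proposition \ref{Link and Deletion} then yields
$$K \simeq (K \setminus a) \vee \Sigma \lk(a, K) \simeq K \setminus a,$$
which is the desired conclusion.

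There is no serious obstacle here: the only subtle point is invoking the flag condition at exactly the right moment, since it is precisely what upgrades pairwise adjacency of $\sigma \cup \{a, b\}$ to a simplex. Without flagness, the containment of closed neighborhoods does not even imply that $a$ is dominated by $b$ in the sense of Definition \ref{def:domination}, and Proposition \ref{flag} would fail; this is why the flag hypothesis is essential.
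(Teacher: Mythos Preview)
Your proof is correct. The paper does not actually prove Proposition~\ref{flag}; it simply cites \cite[Proposition~3.2]{Prisner1992}. Your route via Proposition~\ref{Link and Deletion}---showing that $b$ is a cone apex of $\lk(a,K)$, hence the link is contractible, so both the null-homotopy hypothesis and the vanishing of $\Sigma\lk(a,K)$ hold---is a valid self-contained alternative. (One small slip: you write $v \in \n(a,K) \subseteq \n(b,K)$, but only $\n[a,K] \subseteq \n[b,K]$ is assumed; the conclusion $\{v,b\} \in K$ still follows since $v \in \n[b,K]$ forces $v=b$ or $v \in \n(b,K)$, and either case is fine for the flagness check.) The standard argument behind the cited reference is more direct: the simplicial map $K \to K\setminus a$ sending $a \mapsto b$ and fixing every other vertex is well-defined precisely because $a$ is dominated by $b$ in the sense of Definition~\ref{def:domination}, and this map is a deformation retraction via the straight-line homotopy inside each simplex containing $a$. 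Your approach trades that explicit retraction for the wedge decomposition, which is slightly less elementary but reuses machinery already stated in the paper.
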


Note that Vietoris-Rips complexes are flag simplicial complexes. Hence Proposition \ref{flag} is true for any  Vietoris-Rips complex.

Let $G$ be a finitely generated group with a finite symmetric generating set $S$ (i.e., $S = S^{-1} := \{x^{-1} : x \in S\}$) such that the identity element $e \notin S$. The {\em Cayley graph} $\mathrm{Cay}(G, S)$ is the graph whose vertex set is $G$, with an edge between $g, h \in G$ if and only if $g^{-1}h \in S$.

The {\em word metric}   $d: G \times G \to \mathbb{N} \cup \{0\}$  on a group $G$ associated with the generating set $S$ is defined by
$$
d(g, h) := \min \left\{ n \in \mathbb{N} \cup \{0\} \;\middle|\; g^{-1}h = s_1 s_2 \cdots s_n \text{ for some } s_i \in S \right\}.
$$
This metric coincides with the graph distance (minimum path length distance) between $g$ and $h$ in the Cayley graph $\mathrm{Cay}(G, S)$.

Let $\Z^n$ denote the $n$-dimensional integer lattice (free abelian group  on $n$ generators). For $x\in \mathbb{Z}^n$, we denote by $x_i$ the $i^{th}$ coordinate of $x$. We consider $\Z^n$ as a metric space equipped with Manhattan distance $d$, {\it i.e.,}   for any $x = (x_1, \ldots, x_n)$ and $y = (y_1, \ldots, y_n) \in \Z^n$, $d(x, y) = \sum_{i = 1}^n |x_i - y_i|$. We also consider $\Z^n$ as a graph, where any two vertices $x$ and $y$ are connected by an edge if and only if $d(x, y) = 1$. Note that 
the metric $d$ in $\Z^n$ is the word metric associated with the standard generators $\{\pm e_1, \ldots, \pm e_n\}$, where $e_i$ is the element with $i^{th}$ coordinate $1$, and all other co-ordiantes are $0$.



\section{The complex $\vr{\Z^n}{r}, r \geq n$}\label{sec:vr1}

In this section, we prove Theorems \ref{thm:n<=5} and \ref{thm:n=6}. For a positive integer  $n$, let $[n]= \{1, \ldots, n\}$.  For $m > 0$,  let $\G_m^n$ denote the induced  subgraph $\Z^n[\{0, \ldots, m\}^n]$. Let $\Delta_m^{n, r} = \vr{\G_m^n}{r}$. By Whitehead's theorem \cite{Whitehead}, if all homotopy groups of a CW complex $X$ are trivial, then the unique map $X \to \star$ induces isomorphisms on all homotopy groups and is therefore a homotopy equivalence, here $\star$ denote the one-point space. Hence, $X$ is contractible. Observe that  any homotopy class of $\vr{\mathbb{Z}^n}{r}$ is contained in the $\vr{\G_m^n}{r}$ for some $m\in \mathbb{N}$.
Thus, to prove Theorems \ref{thm:n<=5} and \ref{thm:n=6}, it suffices to show that for each positive integer $m$, the complex $\Delta_m^{n, r} = \vr{\G_m^n}{r}$ is contractible.

We first fix some notations. Let  $\,\mathrm{sgn}:\mathbb{Z}\to\{-1,1\}$ denote the sign function, {\it i.e.,}  $\mathrm{sgn}(x)=1$ if $x>0$ and $\mathrm{sgn}(x)=-1$ if $x<0$. Throughout this article, whenever we use $\mathrm{sgn}(x)$ for some $x\in\mathbb{Z}$, the value of $x$ is always nonzero, i.e., either a positive or a negative integer.

\begin{definition}
	{\rm For $x\in\mathbb{Z}^n$ and $S\subseteq [n]$, we define $\lambda^{[x;S]}$ in $V(\Z^n)$ as follows: 
		$$
		\lambda^{[x;S]}_j = \begin{cases}
			x_j -1, \ \ \text{if} \ \  j\in S\ \text{and }  x_j > 0,\\
			x_j +1, \ \ \text{if} \  \ j\in S \ \text{and }   x_j <  0,\\
			x_j, \ \ \ \ \ \ \   \text{elsewhere} .
		\end{cases}
		$$ Moreover, if $S=\{i_1,\dots,i_r\}\subseteq[n]$, then as a notation, we write $\lambda^{[x;S]}$ as $\lambda^{[x;i_1i_2\dots i_r]}$.}

\end{definition}

Throughout this section, we fix a positive integer $n \geq 2$. Let $\prec$ denote the  anti-lexicographic order on $V(\Z^n)$, {\it i.e.,}   for any two distinct vertices  $x = (x_1, \dots, x_n)$, $y = (y_1, \dots, y_n) \in V(\Z^n)$, we have $x \prec y$ if and only if the largest index $i$ at which $x_i \neq y_i$,  $x_i < y_i$.

Fix a positive integer $m$.  For $1 \leq \alpha \leq \mathrm{Card}(V(\G_m^n))$,  let $\mathcal{H}_m^{n, \alpha}$ denote the subset of $V(\G_m^n)$ after removing the first $\alpha$ elements (with respect to order $\prec$) from 
$V(\G_m^n)$. Let $\delta= (\delta_1 , \ldots, \delta_n)$ be the least element of $\mathcal{H}_m^{n, \alpha}$.  Let $Y_m^{n, \alpha}$ be the induced subgraph of $\Z^n$ on the vertex set 
$ \{x-\delta: x \in \mathcal{H}_m^{n, \alpha}\}$.  Clearly, $(0, \ldots, 0) \in V(Y_m^{n, \alpha})$ and it is the smallest element of $V(Y_m^{n, \alpha})$.

Let $\bf{0}$ denote the vertex  $(0, \ldots, 0) \in V(\Z^n)$. Let $\Gamma_n^{\alpha,r}=\lk({\bf{0}},\vr{Y_m^{n, \alpha}}{r})$. Since the link of a vertex in a flag complex is flag, it follows that, $\Gamma_n^{\alpha,r}$ is flag.

For any  $r \geq 1$, we have 
$$V(\Gamma_{n}^{\alpha,r})\subseteq \underbrace{([-r,r]\cap \Z)\times\cdots\times([-r,r]\cap\Z)}_{(n-1)\text{-times}}\times([0,r]\cap\Z).$$

Observe that, for any  $x \in V(\Z^n)$,  if ${\bf{0}} \prec x$, $x+ \delta \in \mathcal{H}_m^{n, \alpha}$  and $d({\bf{0}}, x) \leq r$, then $x \in  V(\Gamma_n^{\alpha, r})$. Further, if $z \in V(\Gamma_n^{\alpha, r})$, ${\bf{0}} \prec x \prec z$ and $d({\bf{0}}, x) \leq r$, then  $x \in  V(\Gamma_n^{\alpha, r})$.

To show that $\Delta_m^{n,r}$ is contractible, we successively remove vertices from $\G_m^n$, using Proposition \ref{Link and Deletion}, without changing the homotopy type, until only one vertex remains in the complex. For this, it is sufficient to show that $\Gamma_n^{\alpha,r}$ is contractible for all $1 \le \alpha \le \mathrm{Card}(V(\G_m^n)) - 1$. To prove the contractibility of $\Gamma_n^{\alpha,r}$, we first establish a series of lemmas.

\begin{lemma}\label{onevetex}
	Let $r\geq 2$ and let  $\D$ be  a subcomplex of $\Gamma_n^{\alpha, r}$. Let $x,y,z\in V(\D)$ be  such that $z\in V(\lk(x,\D))$ and  $d(x,y)=1$. Let $i \in [n]$ be such that   $|y_i|=|x_i|-1$ and $y_j=x_j$  for all $j\neq i$.

	\begin{itemize}
		\item[(i)] If $|z_i|<|x_i|$, then $d(z,y) \leq r-1$. 
		\item[(ii)] If $|z_i|\geq |x_i|$ and, $sgn(z_ix_i)=-1$, then $d(z,y)\leq r-1$.
	\end{itemize}
\end{lemma}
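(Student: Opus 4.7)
The lemma is a purely metric statement: given that $d(z,x)\le r$ (because $\{x,z\}$ is an edge of $\Delta\subseteq\Gamma_n^{\alpha,r}$), we want to upgrade this by one to $d(z,y)\le r-1$. My plan is to reduce everything to a one-variable inequality and then handle the two cases by a short sign analysis.

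The first step is to observe that $x$ and $y$ agree in every coordinate except the $i$-th, so
\[
d(z,y)-d(z,x) \;=\; |z_i-y_i| - |z_i-x_i|.
\]
Hence it suffices to prove $|z_i-y_i|\le |z_i-x_i|-1$. Since $|y_i|=|x_i|-1\ge 0$, we have $x_i\ne 0$ and $y_i = x_i-\mathrm{sgn}(x_i)$; in particular $y_i$ lies strictly between $0$ and $x_i$ (inclusive of $0$). By replacing $(x_i,y_i,z_i)$ with $(-x_i,-y_i,-z_i)$ if necessary, I would assume without loss of generality that $x_i>0$, so $y_i=x_i-1\ge 0$.

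For case (i), the hypothesis $|z_i|<|x_i|$ becomes $-x_i<z_i<x_i$, and since $z_i\in\mathbb{Z}$, in fact $z_i\le x_i-1=y_i$. Thus $z_i-x_i\le -1$ and $z_i-y_i\le 0$, giving
\[
|z_i-y_i| \;=\; (x_i-1)-z_i \;=\; (x_i-z_i)-1 \;=\; |z_i-x_i|-1.
\]
For case (ii), the sign hypothesis $\mathrm{sgn}(z_ix_i)=-1$ (together with $x_i>0$) forces $z_i<0$, so $z_i<0\le y_i<x_i$, and the same computation
\[
|z_i-y_i| \;=\; y_i-z_i \;=\; (x_i-1)-z_i \;=\; |z_i-x_i|-1
\]
goes through (the assumption $|z_i|\ge|x_i|$ is not even required for the metric inequality itself, but is presumably what the later applications will feed in). In both cases the target bound $d(z,y)\le d(z,x)-1\le r-1$ follows.

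There is no real obstacle here: the lemma is a bookkeeping fact about the Manhattan metric, and the only thing to be careful about is the sign-symmetry step and the fact that $z_i$ is an \emph{integer} (so $|z_i|<|x_i|$ sharpens to $z_i\le x_i-1$ in the positive case). I would present the proof using the sign reduction above and then a single line of arithmetic for each of the two cases.
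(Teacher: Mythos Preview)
Your proof is correct and follows essentially the same approach as the paper: both arguments reduce to showing $|z_i-y_i|=|z_i-x_i|-1$ and then conclude $d(z,y)=d(z,x)-1\le r-1$. The paper simply asserts this coordinate identity in one line, whereas you spell out the sign reduction and the integer sharpening explicitly; your observation that the hypothesis $|z_i|\ge|x_i|$ is not actually needed in case~(ii) is also correct.
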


\begin{proof}
	Given $z\in V(\lk(x,\D))$, it follows that $d(x,z)\leq r$. Observe that each of the hypotheses in $(i)$ and $(ii)$ leads to $|z_i - y_i| = |z_i - x_i| - 1$. Since $|z_j - y_j| = |z_j - x_j|$ for every $j \neq i$, we have  $
	d(z, y) = |z_i - y_i| + \sum_{j \neq i} |z_j - y_j| = (|z_i - x_i| - 1) + \sum_{j \neq i} |z_j - x_j| = d(z, x) - 1 \leq r - 1$. This completes the proof.
\end{proof}

\begin{lemma}\label{r/2}
	Let $r\geq 2$. Then $\Gamma_{n}^{\alpha,r}$ is homotopy equivalent to the induced subcomplex of $\Gamma_{n}^{\alpha,r}$ on the vertex set 
	$\{x\in V(\Gamma_{n}^{\alpha,r}) : |x_i|\leq \f$ for all $1\leq i\leq n\}$.  
	
\end{lemma}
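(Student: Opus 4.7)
The plan is to iteratively remove from $\Gamma_n^{\alpha,r}$ every vertex $x$ having some coordinate with $|x_i|>\f$ by using Proposition \ref{flag}. For each such $x$, I would pick an index $i$ with $|x_i|>\f$ and take as dominator $y := \lambda^{[x;i]}$, so that $|y_i|=|x_i|-1$ and $y_j=x_j$ for $j\neq i$.

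First I would verify $y\in V(\Gamma_n^{\alpha,r})$. Since $|x_i|\geq \f+1\geq 2$, we have $y\neq\mathbf{0}$ and $d(\mathbf{0},y)=d(\mathbf{0},x)-1\leq r$. A short case analysis on the largest index $k$ with $x_k\neq 0$ (which has $x_k>0$ by $\mathbf{0}\prec x$) shows that $y_k>0$ remains the largest nonzero coordinate of $y$, so $\mathbf{0}\prec y$. Finally, moving $x_i$ one step toward $0$ together with $\delta_i\in\{0,\ldots,m\}$ keeps $(y+\delta)_i\in\{0,\ldots,m\}$, so $y\in V(Y_m^{n,\alpha})$.

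Next, I would show that $y$ dominates $x$, i.e.\ $\n[x,\Gamma_n^{\alpha,r}]\subseteq \n[y,\Gamma_n^{\alpha,r}]$. Since $\Gamma_n^{\alpha,r}$ is flag, it suffices to check $d(y,z)\leq r$ for every $z\in \n(x,\Gamma_n^{\alpha,r})$ (the case $z=x$ is immediate from $d(x,y)=1$). Lemma \ref{onevetex} settles the cases $|z_i|<|x_i|$, and $|z_i|\geq|x_i|$ with $\mathrm{sgn}(z_ix_i)=-1$, giving $d(y,z)\leq r-1$ directly. In the remaining case $|z_i|\geq|x_i|$ with $\mathrm{sgn}(z_ix_i)=1$, one has $d(y,z)=d(x,z)+1$; assuming WLOG $x_i>0$ (so $z_i\geq x_i>0$) I would estimate
\begin{align*}
d(x,z)&= (z_i-x_i)+\sum_{j\neq i}|z_j-x_j|\\
&\leq (z_i-x_i)+\sum_{j\neq i}(|z_j|+|x_j|)\\
&= (z_i-x_i)+(d(\mathbf{0},z)-z_i)+(d(\mathbf{0},x)-x_i)\\
&\leq 2r-2x_i\leq r-1,
\end{align*}
where the last step uses $x_i\geq \f+1\geq (r+1)/2$, so that $d(y,z)\leq r$.

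By Proposition \ref{flag} we conclude $\Gamma_n^{\alpha,r}\simeq\Gamma_n^{\alpha,r}\setminus x$. To iterate without the dominator having been deleted, I would process the vertices with $\max_j|x_j|>\f$ in decreasing order of the weight $w(x):=\sum_{j=1}^n\max(0,|x_j|-\f)$. Since $w(y)=w(x)-1$, the dominator $y$ is still present at the step when $x$ is removed, and the domination $\n[x]\subseteq \n[y]$ in the full complex restricts to every intermediate induced subcomplex. The main subtle step is the inequality $d(x,z)\leq 2r-2x_i$ in the last case, where the threshold $\f$ is essential and the bound is sharp; the rest of the argument is routine bookkeeping.
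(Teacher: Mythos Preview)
Your proposal is correct and follows the same overall strategy as the paper: iteratively delete each vertex $x$ with some $|x_i|>\f$ via domination by $y=\lambda^{[x;i]}$, using Lemma~\ref{onevetex} for the two easy cases and a direct distance estimate for the case $|z_i|\geq|x_i|$ with $\mathrm{sgn}(z_ix_i)=1$.

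The one genuine difference is how the third case is handled. The paper argues by downward induction on the level $|x_i|=\f+t$: having already deleted all vertices with $i$th coordinate exceeding $\f+t$ in absolute value, it forces $v_s=u_s$ in the remaining case and then bounds $d(v,\lambda^{[u;s]})\leq 1+2\cil-2t\leq r$. Your argument instead proves domination already in the \emph{full} complex $\Gamma_n^{\alpha,r}$, via the inequality
\[
d(x,z)\leq d(\mathbf{0},x)+d(\mathbf{0},z)-2|x_i|\leq 2r-2|x_i|\leq r-1,
\]
which uses only $|x_i|\geq\f+1\geq(r+1)/2$ and nothing about previously deleted vertices. This is cleaner: the removal order (your weight $w(x)=\sum_j\max(0,|x_j|-\f)$) is needed only to guarantee the dominator has not yet been deleted, not for the domination inequality itself. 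The paper's inductive framing, on the other hand, sets the pattern used in the subsequent, more delicate Lemmas~\ref{xi+xj}--\ref{sumoffour2}, where knowing that larger configurations have already been removed becomes essential.
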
 

\begin{proof}
	Without changing the homotopy type of $\Gamma_{n}^{\alpha,r}$, we remove all the vertices $x$ such that  $|x_i| > \f$ for some $i$. Let $x \in V(\Gamma_n^{\alpha, r})$ be such that $|x_i| = r$ for some $i \in [n]$.  Since  $d({\bf{0}}, z)  = \sum_{i=1}^{n} |z_i| \leq r$ for all $z \in V(\Gamma_n^{\alpha, r})$ and $x \succ {\bf{0}}$, we get that  $x_i = r$ and $x_j = 0$ for all $j \neq i$. Let us consider the vertex $\lambda^{[x;i]}$. Since $r \geq 2$, $\lambda_i^{[x;i]}>0$ and $\lambda_j^{[x;i]}=0$ for all $j\neq i$. This implies that  $\lambda^{[x;i]} \succ {\bf{0}}$ and $\lambda^{[x;i]} \in V(\Gamma_n^{\alpha, r})$. Clearly, $d(\lambda^{[x;i]}, x) = 1$.
	
	We show that $\n[x, \Gamma_{n}^{\alpha,r}] \subseteq \n[\lambda^{[x;i]}, \Gamma_{n}^{\alpha,r}]$. Clearly, $x \in \n[\lambda^{[x;i]}, \Gamma_{n}^{\alpha,r}]$. Let $z \in \n[x, \Gamma_n^{\alpha, r}]$ with $z \neq x$.  Since $z \succ {\bf{0}}$, $d(z, x) \leq r$ and $x_i = r$, we conclude that $|z_i | < r = x_i$. Therefore, from Lemma \ref{onevetex}, $d(z, \lambda^{[x;i]}) \leq r-1$. Thus, $z \in \n[\lambda^{[x;i]}, \Gamma_{n}^{\alpha,r}]$, and hence $\n[x, \Gamma_{n}^{\alpha,r}] \subseteq \n[\lambda^{[x;i]}, \Gamma_{n}^{\alpha,r}]$. From Proposition \ref{flag}, we conclude that $\Gamma_n^{\alpha, r} \simeq \mathrm{Ind}_{\Gamma_n^{\alpha, r}}\left(V(\Gamma_{n}^{\alpha,r}) \setminus \{x\}\right).$
	Let $A= \{y \in  V(\Gamma_{n}^{\alpha,r}) : y_i = r \ \text{for some } \ i \in [n] \ \text{and} \  y_j = 0 \ \text{for all } \ i \neq j \}$.
	Repeating the above argument for each vertex $y \in A$, we get  that 
	$$
	\Gamma_n^{\alpha, r} \simeq \mathrm{Ind}_{\Gamma_n^{\alpha, r}}\left(V(\Gamma_n^{\alpha, r}) \setminus A \right).
	$$
   If $r=2$, then for evrely $z\in \mathrm{Ind}_{\Gamma_n^{\alpha, r}}\left(V(\Gamma_n^{\alpha, r}) \setminus A \right)$, $|z_j|\leq 1$ for all $j\in [n]$ and thus we are done. Now, we assume that $r\geq 3$.	
   
   Inductively, assume that, without changing the homotopy type of  $\Gamma_{n}^{\alpha,r}$, we have removed all the vertices $y$ from $\Gamma_n^{\alpha, r}$ such that $r \geq |y_i| \geq \f + t + 1$ for some $t \geq 1$ and $i \in [n]$.  Let  $\Gamma'$ denote the resulting subcomplex of $\Gamma_{n}^{\alpha,r}$ after removal of all such vertices. Then   $\Gamma'$ is the induced  subcomplex on the vertex set $\{x\in V(\Gamma_{n}^{\alpha,r}): |x_i| \leq \f + t \ \text{for all} \ i\in [n]\}$ and $\Gamma'\simeq \Gamma_{n}^{\alpha,r}$. 
	
	Let $u \in V(\Gamma')$ be such that $|u_s| = \f + t$ for some $s\in [n]$. Since $d({\bf{0}}, u)  = \sum_{j \in [n]} |u_j| \leq r$, we see that $\sum_{j \neq s} |u_j| \leq \cil - t$. 
	Consider the element $\lambda^{[u;s]}$. Clearly, $d({\bf{0}}, \lambda^{[u;s]}) \leq d({\bf{0}}, u)-1 \leq r-1$ and $d(u, \lambda^{[u;s]}) = 1$. Since  $r \geq 2$, and $0\prec \lambda^{[u;s]}\prec u$, we get  $\lambda^{[u;s]} \succ {\bf{0}}$ and  $\lambda^{[u;s]} \in V(\Gamma_n^{\alpha, r})$. Further, since $|\lambda^{[u;s]}_j| < \f +t $ for all $j \in [n]$, we have $\lambda^{[u;s]} \in V(\Gamma')$.

	We now prove that $\n[u, \Gamma'] \subseteq \n[\lambda^{[x;s]}, \Gamma']$. Clearly, $u \in \n[\lambda^{[u;s]}, \Gamma']$. Let $v \in \n[u, \Gamma']$ with $v \neq u$. If $|v_s| < |u_s|$ or, $|v_s| \geq |u_s|$ and $sgn(v_s u_s) = -1$, then from Lemma \ref{onevetex}, $d(v, \lambda^{[u;s]}) \leq r$. Thus, in this case, $v \in \n[\lambda^{[u;s]}, \Gamma']$.
	
	Suppose $|v_s| \geq |u_s|$ and $sgn(v_s u_s) = 1$. Since $|v_s| \leq  \f + t$, it follows that $|v_s| = \f + t$ and $v_s = u_s$. Further, since $v\in V(\Gamma')\subseteq V(\Gamma_{n}^{\alpha,r})$, $d({\bf{0}}, v)=\sum_{j\in [n]}|v_j|\leq r$. Thus, we have $\sum_{j \neq s} |v_j| \leq \cil - t$. Therefore,
	\begin{align*}
		d(v,\lambda^{[u;s]}) = & |v_s - \lambda^{[u;s]}_s| + \sum_{j \neq s} |v_j - \lambda^{[u;s]}_j|\\
		= & |v_s - u_s| + 1 + \sum_{j \neq s} |v_j - u_j| \leq 1 + \sum_{j \neq s} |u_j| + \sum_{j \neq s} |v_j| \\
		& \leq 1 + 2\cil - 2t \leq  r + 2 - 2t \leq r, \ \text{as}\ t \geq 1.
	\end{align*}
	This implies that $\n[u, \Gamma'] \subseteq \n[\lambda^{[u;s]}, \Gamma']$. Therefore, from Proposition \ref{flag},
	$\Gamma'\simeq \mathrm{Ind}_{\Gamma'}(V(\Gamma')\setminus \{u\}).$
	Let $B=\{x\in V(\Gamma'):|x_i|=\f+t \ \text{for some}\ i\in [n] \}$. Repeating the above argument for each vertex $y\in B$, we find that $\Gamma'\simeq \mathrm{Ind}_{\Gamma'}(V(\Gamma')\setminus B)$. Consequently, $\Gamma_{n}^{\alpha,r}\simeq \mathrm{Ind}_{\Gamma'}(V(\Gamma')\setminus B)$.

	By induction, we conclude that $\Gamma_{n}^{\alpha,r}$ is homotopy equivalent to the induced subcomplex of $\Gamma_{n}^{\alpha,r}$ on the vertex set 
	$\{x\in V(\Gamma_{n}^{\alpha,r}) : |x_i|\leq \f \ \forall \ 1\leq i\leq n\}$. 
\end{proof}

\begin{lemma}\label{twovertex}
	Let $r\geq 2$ and let  $\D$ be  a subcomplex of $\Gamma_n^{\alpha, r}$. Let $x,y,z\in V(\D)$ be  such that $z\in V(\lk(x,\D))$ and $d(x,y)=2$. Let $|y_i|=|x_i|-1$ and $|y_j|=|x_j|-1$ for some  distinct $i,j \in [n]$,  and $y_k=x_k$ for all $k\neq i,j$.  
	\begin{itemize}
		\item[(a)] If $|z_i|<|x_i|$ or $|z_j|<|x_j|$,  then $d(z,y)\leq r$. 
		\item[(b)] If  $|z_i|\geq |x_i|$ and $sgn(z_ix_i)=-1$, or  $|z_j|\geq |x_j|$ and $sgn(z_jx_j)=-1$,  then $d(z,y)\leq  r$.
		
	\end{itemize}
\end{lemma}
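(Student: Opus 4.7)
The plan is to extend the single-coordinate distance-drop analysis of Lemma \ref{onevetex} to the two-coordinate situation. The key observation is that whichever of the two coordinates $i,j$ happens to satisfy the hypothesis in (a) or (b), passing from $x$ to $y$ \emph{strictly decreases} the contribution from that coordinate to $d(\cdot,z)$ by exactly $1$, while the contribution from the other coordinate can increase by at most $1$. These two effects then cancel to give $d(z,y) \le d(z,x) \le r$. By the symmetry of the hypotheses in the indices $i$ and $j$, I may assume without loss of generality that the distinguished hypothesis is satisfied at the index $i$.

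Since $|y_i| = |x_i| - 1$ forces $x_i \ne 0$, we necessarily have $y_i = x_i - \mathrm{sgn}(x_i)$, and likewise $y_j = x_j - \mathrm{sgn}(x_j)$. The key identity I would then verify is
\[
|z_i - y_i| \;=\; |z_i - x_i| - 1,
\]
under either the condition $|z_i|<|x_i|$ (the relevant half of (a)) or the condition $|z_i| \ge |x_i|$ with $\mathrm{sgn}(z_i x_i) = -1$ (the relevant half of (b)). This is the same sign-case analysis that underlies Lemma \ref{onevetex}: in both hypothesized situations $z_i$ lies strictly on the side of $x_i$ opposite to $x_i + \mathrm{sgn}(x_i)$, so the step from $x_i$ to $y_i$ (one unit toward $0$) shortens the distance to $z_i$ by exactly $1$.

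For the remaining coordinates I would use only the trivial bounds: since $|x_j - y_j|=1$, the triangle inequality gives $|z_j - y_j| \le |z_j - x_j| + 1$, while $|z_k - y_k| = |z_k - x_k|$ for all $k \ne i,j$. Summing,
\begin{align*}
d(z,y) &= |z_i - y_i| + |z_j - y_j| + \sum_{k \ne i,j} |z_k - y_k| \\
&\le (|z_i - x_i| - 1) + (|z_j - x_j| + 1) + \sum_{k \ne i,j} |z_k - x_k| \\
&= d(z,x) \;\le\; r,
\end{align*}
where the final inequality uses $z \in V(\lk(x,\Delta))$. I do not anticipate any real obstacle here: the argument is a clean two-coordinate extension of Lemma \ref{onevetex}, with the ``$-1$'' saved at the good coordinate absorbing the worst-case ``$+1$'' at the other, and the remaining coordinates contributing equally to $d(z,x)$ and $d(z,y)$.
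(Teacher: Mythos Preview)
Your proposal is correct and follows essentially the same approach as the paper's proof: both establish the key identity $|z_i-y_i|=|z_i-x_i|-1$ at the distinguished coordinate (under either hypothesis), bound the other modified coordinate by $|z_j-y_j|\le |z_j-x_j|+1$, leave the remaining coordinates unchanged, and sum to get $d(z,y)\le d(z,x)\le r$. The only cosmetic difference is that you invoke the triangle inequality explicitly for the $+1$ bound, whereas the paper simply asserts it.
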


\begin{proof}
	Suppose $|z_i| < |x_i|$. Then we have $|z_i - y_i| = |z_i - x_i| - 1$. Since $|y_j|=|x_j|-1$, regardless of whether $|z_j| < |x_j|$ or $|z_j| \geq |x_j|$, we have $|z_j - y_j| \leq |z_j - x_j| + 1$. Therefore,
	$d(z, y) = |z_i - y_i| + |z_j - y_j| + \sum_{k \neq i, j} |z_k - y_k| \leq (|z_i - x_i| - 1) + (|z_j - x_j| + 1) + \sum_{k \neq i, j} |z_k - x_k| \leq r.$
	A similar argument applies if $|z_j| < |x_j|$. This proves part $(a)$.

	For part $(b)$, note that if $|z_i| \geq |x_i|$ and $sgn(z_i x_i) = -1$, then $|z_i - y_i| = |z_i - x_i| - 1$. Thus, similar computations as above show that $d(z, y) \leq r$. A similar argument applies if $|z_j| \geq |x_j|$ and $sgn(z_j x_j) = -1$. This proves part $(b)$.
\end{proof}

\begin{lemma}\label{xi+xj}
	Let $r \geq 3$. Then $\Gamma_n^{\alpha, r}$ is homotopy equivalent to the induced subcomplex of $\Gamma_n^{\alpha, r}$ on the vertex set $\{x \in \Gamma_n^{\alpha, r} : |x_i| \leq \f \ \text{and} \ |x_j| + |x_k| \leq \cil \ \ \text{for all } \ i,j,k\in [n], j\neq k\}$.
	
\end{lemma}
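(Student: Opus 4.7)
The plan is to extend the reduction of Lemma \ref{r/2}. Starting from the induced subcomplex on $\{x : |x_i| \leq \f \text{ for all } i\}$, I iteratively remove every vertex whose largest pair sum exceeds $\cil$. Proceed by downward induction on $s \in \{2\f, \ldots, \cil + 1\}$: assume every vertex with $\max_{a \neq b}(|x_a| + |x_b|) > s$ has already been deleted and the resulting subcomplex $\Gamma'$ satisfies $\Gamma' \simeq \Gamma_n^{\alpha, r}$. Within each stage, order the candidates for removal by decreasing $d(\mathbf{0}, \cdot)$; then any dominator constructed by reducing coordinates of $x$ (necessarily of strictly smaller $\ell^1$-norm) is still present when it is invoked.

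For a vertex $x$ with $|x_j| + |x_k| = s > \cil$, the natural dominator is $y = \lambda^{[x;jk]}$, which lies at distance $2$ from $x$. One checks $y \in V(\Gamma')$: the pair sum $|y_j| + |y_k|$ equals $s - 2$, every other pair sum of $y$ is at most $s$, $d(\mathbf{0}, y) \leq r - 2$, and (generically) $y \succ \mathbf{0}$. For any $z \in \n[x, \Gamma'] \setminus \{x\}$, Lemma \ref{twovertex} gives $d(z, y) \leq r$ unless $|z_j| \geq |x_j|$, $|z_k| \geq |x_k|$ and $\mathrm{sgn}(z_j x_j) = \mathrm{sgn}(z_k x_k) = 1$. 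In that residual case, the pair-sum bound $|z_j| + |z_k| \leq s$ combined with $|z_j| \geq |x_j|, |z_k| \geq |x_k|$ forces $z_j = x_j$ and $z_k = x_k$, so
$$d(z, y) = 2 + \sum_{i \neq j, k}|z_i - x_i| \leq 2 + \sum_{i \neq j, k}(|z_i| + |x_i|) \leq 2 + 2(r - s) \leq r,$$
since $2s \geq 2(\cil + 1) \geq r + 2$. Proposition \ref{flag} then deletes $x$ without changing the homotopy type.

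The principal obstacle is the exceptional configuration $y = \lambda^{[x;jk]} \not\succ \mathbf{0}$. A case analysis, using $|x_i| \leq \f$, pins this down to the following: $r$ is even, $k$ is the largest nonzero coordinate of $x$ with $x_k = 1$, $x_j = -\f$, and $x_i = 0$ for $j < i < k$. If $x$ has any other nonzero coordinate $x_i$ with $i \neq j, k$, then $\{j, i\}$ is itself a bad pair (because $|x_j| = \f$) and $\lambda^{[x; j i]}_k = x_k = 1 > 0$ makes that pair non-exceptional; I switch to it. Otherwise $x$ is supported exactly on $\{j, k\}$ with $x_j = -\f, x_k = 1$, and I replace the dominator by $y' = \lambda^{[x; j]}$, for which $y'_k = 1 > 0$, so $y' \in V(\Gamma')$. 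Lemma \ref{onevetex} handles most $z \in \n[x, \Gamma']$; the remaining residual case $z_j = x_j$ is treated by
$$d(z, y') = 1 + |z_k - 1| + \sum_{i \neq j, k}|z_i| \leq 1 + |z_k - 1| + \f - |z_k| \leq \f + 2 \leq r,$$
using $\sum_{i \neq j}|z_i| \leq r - |z_j| = \f$ and $r \geq 4$ (the case $r = 3$ is vacuous since $|x_i| \leq 1$ forbids any bad pair). Iterating over all stages $s$ concludes the proof.
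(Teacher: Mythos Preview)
Your argument is correct and follows the same strategy as the paper's proof: both start from Lemma~\ref{r/2} and iteratively delete vertices with an excessive two-coordinate sum by exhibiting a dominating vertex obtained from coordinate reduction (Lemma~\ref{twovertex} for the generic case, Lemma~\ref{onevetex} for the boundary case). The only organizational difference is that the paper processes index pairs $(l_1,l_2)$ one at a time in dictionary order, while you run a single downward induction on $s=\max_{a\neq b}(|x_a|+|x_b|)$ and select a witnessing pair on the fly.

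One expository point deserves correction. Your stated characterization of the exceptional configuration is too strong: $\lambda^{[x;jk]}\not\succ\mathbf{0}$ does \emph{not} force $x_j<0$ or $x_i=0$ for $j<i<k$. For instance, with $r=4$, $n=3$, $x=(2,-1,1)$ and $(j,k)=(1,3)$, one has $\lambda^{[x;13]}=(1,-1,0)\prec\mathbf{0}$ while $x_1>0$ and $x_2\neq 0$. What \emph{does} follow from $|x_k|=1$ and $|x_j|+|x_k|=s>\cil$ together with $|x_j|\le\f$ is exactly what your subsequent argument uses: $r$ is even, $k$ is the largest nonzero index, $x_k=1$, and $|x_j|=\f$. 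Since every other nonzero coordinate then satisfies $|x_i|=1$ (because $|x_j|+|x_i|\le s=\f+1$), the switched pair $(j,i)$ again has sum exactly $s$, so your residual-case estimate $d(z,y)\le 2+2(r-s)\le r$ applies verbatim; and if no such $i$ exists, then indeed $x_j=-\f$ and your $\lambda^{[x;j]}$ argument goes through. So the proof is sound once the characterization is corrected to these weaker (true) statements.
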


\begin{proof}
	From Lemma \ref{r/2},  $\Gamma_n^{\alpha, r}$ is homotopy equivalent to the induced subcomplex, say $\D$, of $\Gamma_n^{\alpha, r}$ on the vertex set $\{y \in \Gamma_n^{\alpha, r} : |y_i| \leq \f \text{ for all } 1 \leq i \leq n\}$.
	
	Consider the dictionary order $\prec'$ on the set $\{(i,j): i,j \in [n], i < j\}$. Then we have $(1,2) \prec' (1,3) \prec' \cdots \prec' (1,n) \prec' (2,3) \prec' \cdots \prec' (n-1,n)$.   Without changing the homotopy type of $\Delta$, we remove all the vertices whose sum of the $i$-th and $j$-th entries exceeds $\cil$ for some $i, j \in [n]$. We remove such vertices in a sequence following the order $\prec'$.
	
	We begin with the pair $(1,2)$ and remove all vertices $y$ from $\D$ such that $|y_1| + |y_2| > \cil$, using induction on the value $r - |y_1| - |y_2|$.
	
	For the base case, let $x \in V(\D)$ with $|x_1| + |x_2| = r$. Since $d({\bf{0}}, x) \leq r$ and $|x_1|,|x_2|\leq \f$, we see that $x_1, x_2 \neq 0$ and $x_j = 0$ for all $3 \leq j \leq n$. If $r = 3$, then since  $|x_1|, |x_2| \leq \lfloor{\frac{3}{2}}\rfloor = 1$, we get  $|x_1| + |x_2| \leq 2$, contradicting the assumption. Thus we assume $r \geq 4$, which implies $|x_1|, |x_2| \geq 2$.
	Now, consider $\lambda^{[x;12]}$. Since $r \geq 4$ and ${\bf{0}} \prec x$, it follows that $x_2\geq 2$. Thus, from the definition of $\lambda^{[x;12]}$ we conclude that ${\bf{0}} \prec \lambda^{[x;12]}$ and $\lambda^{[x;12]} \in V(\Delta)$. Clearly, $d(x, \lambda^{[x;12]}) = 2 \leq r$ and therefore  $x \in \n[\lambda^{[x;12]}, \D]$.  We show that $\n[x, \D]  \subseteq \n[\lambda^{[x;12]}, \D]$.
	
	Let $z \in \n[x, \D]$ with $z \neq x$. If $|z_1| < |x_1|$ or $|z_2| < |x_2|$, then by Lemma \ref{twovertex}, $d(z, \lambda^{[x;12]}) \leq r$. Similarly, if $|z_1| \geq |x_1|$ and $sgn(z_1 x_1) = -1$, or $|z_2| \geq |x_2|$ and $sgn(z_2 x_2) = -1$, then again $d(z, \lambda^{[x;12]}) \leq r$.

	Now consider the case $|z_1| \geq |x_1|$, $sgn(z_1 x_1) = 1$ and  $|z_2| \geq |x_2|$, $sgn(z_2 x_2) = 1$. Since $|x_1| + |x_2| = r$, it follows that $z_1 = x_1$ and $z_2 = x_2$, hence $d(z, \lambda^{[x;12]}) = 2 \leq r$. Therefore, $\n[x, \D] \subseteq \n[\lambda^{[x;12]}, \D]$. From Proposition \ref{flag}, $\D \simeq \D \setminus x$. 
	
	By repeating the above argument for all vertices $y$ such that $|y_1| + |y_2| = r$,  we get that 
	$
	\Delta \simeq \mathrm{Ind}_{\Delta}(V(\Delta) \setminus \{y\in V(\D) : |y_1|+ |y_2| = r\}).$
	
	Assume that, without changing the homotopy type of  $\Delta$, we have removed all the vertices $y$ from $\Delta$ such that $r \geq |y_1| + |y_2| \geq \cil + t + 1$ for some $t \geq 1$.  Let  $\Delta_1$ denote the resulting subcomplex of $\Delta$ after removal of all such vertices. Then   $\Delta_1$ is the induced  subcomplex on  $\{a \in V(\D) : |a_1| + |a_2| \leq \cil + t, t\geq 1\}$ and $\Delta \simeq \Delta_1$.

	Let $p \in V(\D_1)$ with $|p_1| + |p_2| = \cil + t$. By the definition of $\D_1$,  we have $p \succ {\bf{0}}$, moreover, since $r \geq 3$ and $|p_1|, |p_2| \leq \f$, we get  $p_1, p_2 \neq 0$.  Now, we have the following cases:

	\noindent\textbf{Case (i):} 
	Suppose that either $p_2 \geq 2$, or $p_2\leq 1$ and there exists $k > 2$ such that $p_k>0$. Without loss of generality, assume that when $p_2 \leq 1 $,  $k$ is the largest such index. Since $p \succ {\bf{0}}$, $p_j =0$ for all $j > k$. 
	
	Consider the element $\lambda^{[p;12]}$. Observe that $d(p, \lambda^{[p;12]}) = 2$, $|\lambda^{[p;12]}_1 - p_1| = 1$, $|\lambda^{[p;12]}_2 - p_2| = 1$, and $\lambda^{[p;12]}_j = p_j$ for all $j \geq 3$.  In case of  $p_2\leq 1$, $\lambda^{[p;12]}_k > 0$ and $\lambda^{[p;12]}_j = 0$ for all $j > k$. Therefore, $\lambda^{[p;12]} \succ {\bf{0}}$. In case of $p_2\geq 2$, there exists $s\geq 2$ such that $p_s>0$ and $s$ is the largest such index. Then from the definition of $\lambda^{[p;12]}$, $\lambda^{[p;12]}_s>0$ and $\lambda^{[p;12]}_j=0$, for all $j\geq s$. Therefore, $\lambda^{[p;12]} \succ {\bf{0}}$. Moreover, since $|\lambda^{[p;12]}_j|=|p_j|-1$ for $j\in\{1,2\}$, we have $|\lambda^{[p;12]}_1|+|\lambda^{[p;12]}_2|\leq \cil + t-2$. Therefore, $\lambda^{[p;12]}\in V(\Delta_1)$.

	We claim that $\n[p, \D_1] \subseteq \n[\lambda^{[p;12]}, \D_1]$. Let $q \in \n[p, \D_1]$ with $q \neq p$. If $|q_1| < |p_1|$ or $|q_2| < |p_2|$, then $d(q, \lambda^{[p;12]}) \leq r$ by Lemma \ref{twovertex}. Similarly, if $|q_1| \geq |p_1|$ and $sgn(q_1 p_1) = -1$, or $|q_2| \geq |p_2|$ and $sgn(q_2 p_2) = -1$, then $d(q, \lambda^{[p;12]}) \leq r$.
	
	Now consider $|q_1| \geq |p_1|$, $sgn(q_1 p_1) = 1$, and $|q_2| \geq |p_2|$, $sgn(q_2 p_2) = 1$. Since $|q_1| + |q_2| \leq \cil + t$ and  $|p_1| + |p_2| = \cil + t$, we get $q_1 = p_1$ and $q_2 = p_2$. Then $\sum_{i \geq 3} |q_i| \leq \f -  t$ and $\sum_{i \geq 3} |p_i| \leq \f - t$. Thus,
	\begin{align*}
		d(q, \lambda^{[p;12]}) &= |q_1 - \lambda^{[p;12]}_1| + |q_2 - \lambda^{[p;12]}_2| + \sum_{i \geq 3} |q_i - \lambda^{[p;12]}_i|\\ &=  |p_1 - \lambda^{[p;12]}_1| + |p_2 - \lambda^{[p;12]}_2| + \sum_{i \geq 3} |q_i - p_i| \\
		& \leq 2 + \sum_{i \geq 3} |p_i| + \sum_{i \geq 3} |q_i| \leq 2 + 2\f - 2t\leq r + 2 - 2t \leq r. 
	\end{align*}
	Therefore, $\n[p, \D_1] \subseteq \n[\lambda^{[p;12]}, \D_1]$. From Proposition \ref{flag}, $\D_1 \simeq \D_1 \setminus p$.

	\noindent\textbf{Case (ii):} $p_2 \leq 1$ and $p_k \leq 0$ for all $k > 2$.
	
	Since $p \succ {\bf{0}}$ and $p_2 \neq 0$, we conclude that $p_2 =1$ and $p_j = 0$ for all $j > 2$.
	
	Using $|p_1| + |p_2| = \cil + t$, we get $|p_1| = \cil + t - 1$. Since $t \geq 1$, it follows that $|p_1| \geq \cil $.  Further, since $|p_1 | \leq \f$, we get that $r$ is even and $|p_1| = \frac{r}{2}$. Consider the element $\lambda^{[p;1]}$.
	
	Note that $|\lambda^{[p;1]}_1|=|p_1|-1$, and $\lambda_j^{[p;1]} = p_j$ for all $j \geq 2$. Since $|p_1|+|p_2|=\cil + t$ we get $|\lambda^{[p;1]}_1|+|\lambda^{[p;1]}_2| < \cil + t$. Using $p\succ {\bf{0}}$, $p_2 = \lambda_2^{[p;1]}=1$ and $p_j = \lambda_j^{[p;1]}=0$ for all $j>2$, we see that $\lambda^{[p;1]}\succ {\bf{0}}$ and $\lambda^{[p;1]} \in V(\Delta_1)$.
	
	Let $u \in \n[p, \D_1]$ with $u \neq p$. If $|u_1| < |p_1|$, then $d(u, \lambda^{[p;1]}) \leq r$ by Lemma \ref{onevetex}. Similarly, if $|u_1| \geq |p_1|$ and $sgn(u_1 p_1) = -1$, then $d(u, \lambda^{[p;1]}) \leq r$.
	
	Now consider $|u_1| \geq |p_1|$, $sgn(u_1 p_1) = 1$. Since $|u_1|\leq \frac{r}{2}$, we must have $u_1 = p_1$. Then $\sum_{i \geq 2} |u_i| \leq \frac{r}{2}$ and $\sum_{i \geq 2} |p_i| =1$. Thus,
	\begin{align*}
		d(u, \lambda^{[p;1]}) &= |u_1 - \lambda^{[p;1]}_1| + \sum_{i \geq 2} |u_i - \lambda^{[p;1]}_i| = |p_1 - \lambda^{[p;1]}_1| +  \sum_{i \geq 2} |u_i - p_i| \\
		& \leq 1 + \sum_{i \geq 2} |p_i| + \sum_{i \geq 2} |u_i| \leq 1 + 1 + \frac{r}{2} \leq r. 
	\end{align*}
	Thus, $\n[p, \D_1] \subseteq \n[\lambda^{[p;1]}, \D_1]$, and from Proposition \ref{flag},  $\D_1 \simeq \D_1 \setminus p$. 
	
	From above Case (i) and Case (ii), and using induction, we conclude that 
	$$\Gamma_n^{\alpha, r} \simeq \D \simeq \mathrm{Ind}_{\Delta}(V(\D) \setminus \{x \in V(\D) : |x_1| + |x_2| > \cil\}).
	$$

	Let $\Delta_2$ denote the induced complex  $\mathrm{Ind}_{\Delta}(V(\D) \setminus \{x \in V(\D) : |x_1| + |x_2| > \cil\})$. Let $l_1, l_2 \in [n]$ such that $(l_1, l_2) \neq (1, 2)$. Assume that  $\Delta_2$ is homotopy equivalent to the induced subcomplex of $\Delta_2$ on the vertex set $V(\Delta_2) \setminus \{u: |u_s| + |u_t| > \cil$ for all ordered pairs $(s, t) \prec (l_1, l_2) \}$.

	Using a similar argument as above, we get that 
	$$
	\Delta_2 \simeq \mathrm{Ind}_{\Delta_2}(V(\D_2) \setminus \{u \in V(\D_2) : |u_s| + |u_t| > \cil \ \forall \ (s, t) \preceq (l_1, l_2)  \}).
	$$
	By induction, we conclude that 
	$$\Gamma_n^{\alpha, r} \simeq \D \simeq \Delta_2 \simeq \mathrm{Ind}_{\Delta_2}(V(\D_2) \setminus \{u \in V(\D_2) : |u_s| + |u_t| > \cil \ \forall \ s, t \in [n] \}).$$
	Hence, the result follows.
\end{proof}


\begin{lemma}\label{lessthanfloor}
	Let $r\geq 4$. Then $\Gamma_n^{\alpha, r}$ is homotopy equivalent to the induced subcomplex of $\Gamma_n^{\alpha, r}$ on  $\{x\in \Gamma_n^{\alpha, r}: |x_i|< \f,  |x_j|+|x_k|\leq \cil \ \text{for all } \ i,j,k\in [n], j\neq k \}$. 
\end{lemma}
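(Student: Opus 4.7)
The plan is to start from the subcomplex $\D$ provided by Lemma \ref{xi+xj} and iteratively delete from $\D$, via Proposition \ref{flag}, every vertex $x$ with $|x_i|=\f$ for some $i\in[n]$, until only vertices satisfying the stated strict inequality remain. Since $r\geq 4$ forces $\f\geq 2$, the pairwise bound $|x_i|+|x_k|\leq\cil\leq\f+1<2\f$ rules out two coordinates of $x$ simultaneously attaining $\f$; thus each $x$ to be removed has a unique \emph{deep coordinate} $i=i(x)$, with $|x_j|\leq\cil-\f\in\{0,1\}$ for all $j\neq i$.

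In the even case ($r=2s$, so $\f=\cil=s\geq 2$), the pairwise constraint forces $x_j=0$ for every $j\neq i$, and $x\succ\bf{0}$ then forces $x=se_i$. I would take $y=(s-1)e_i$, noting $y\succ\bf{0}$ (as $s\geq 2$) and $y\in V(\D)$. For any $z\in\n[x,\D]\setminus\{x\}$, one has $|z_i|<s$ (else the pairwise constraint together with $z\succ\bf{0}$ would force $z=x$), so Lemma \ref{onevetex} yields $d(z,y)\leq r-1$; hence $\n[x,\D]\subseteq\n[y,\D]$, and Proposition \ref{flag} deletes $x$ without altering the homotopy type.

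In the odd case ($r=2s+1$, so $\f=s$, $\cil=s+1$), the vertex $x$ may carry up to $s+1$ satellite coordinates valued in $\{\pm 1\}$. I would process the deep vertices in a suitably chosen well-ordering and, for each such $x$, exhibit a dominator of the form $y=\lambda^{[x;S]}$ for a subset $S\subseteq\{j:x_j\neq 0\}$ selected so as to guarantee $y\succ\bf{0}$ and $y\in V(\D)$ -- for instance $S=\{j:x_j\neq 0\}$ when the largest nonzero index $j_0$ of $x$ satisfies $|x_{j_0}|>1$, and $S=\{j:x_j\neq 0\}\setminus\{j_0\}$ when $|x_{j_0}|=1$. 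To verify $\n[x,\D']\subseteq\n[y,\D']$ in the current subcomplex $\D'$, one expands $d(z,y)-d(z,x)=\sum_{j\in S}s_j$ with each $s_j\in\{-1,+1\}$ (determined by whether $z_j$ lies on the ``far'' or ``near'' side of $x_j$) and bounds the sum coordinatewise using Lemmas \ref{onevetex} and \ref{twovertex}, together with a case split on $\mathrm{sgn}(z_ix_i)$ and $|z_i|$.

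The main technical obstacle lies precisely in this last verification in the odd case: a neighbor $z$ of $x$ with $z_i=x_i$ can produce many $s_j=+1$ and so threaten $d(z,y)>r$. The resolution is that any such obstructing $z$ must itself be a deep vertex (with $|z_i|=\f$), and the well-ordering is arranged so that $z$ has already been removed from $\D'$ by the time $x$ is processed. Consequently the required domination does hold in $\D'$, and iterating this removal across all deep vertices yields precisely the induced subcomplex described in the statement.
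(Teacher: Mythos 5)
Your reduction to vertices with a unique ``deep'' coordinate and your treatment of the even case are correct and agree with Step~I of the paper's proof (the paper uses the dominator $\lambda^{[x;i]}$, which is your $(s-1)e_i$). The odd case, however, contains a genuine gap, and in fact your proposed dominator fails outright. Take $r=5$, $n=5$ and $x=(2,1,1,1,0)$, a deep vertex with $|x_1|=\f=2$ and satellite mass $\cil=3$. Your recipe (largest nonzero index $j_0=4$ with $|x_{j_0}|=1$) gives $S=\{1,2,3\}$ and $y=\lambda^{[x;S]}=(1,0,0,1,0)$. Now consider $z=(-1,1,1,0,1)$: it satisfies $z\succ{\bf{0}}$, $d({\bf{0}},z)=4\le r$, $|z_j|\le 1<\f$ for all $j$, and $d(z,x)=3+0+0+1+1=5\le r$, so $z$ is a neighbour of $x$ that lies in the final target subcomplex and is never removed; yet $d(z,y)=2+1+1+1+1=6>r$. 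Hence $\n[x,\D']\not\subseteq\n[y,\D']$ in every intermediate complex $\D'$, and no well-ordering of the deep vertices can repair this, because the obstructing vertex is \emph{not} deep and is not scheduled for removal. This refutes your claimed resolution that every obstructor is itself a deep vertex removable earlier. (Even for obstructors that are deep, you would additionally have to prove that the obstruction relation is acyclic --- mutual obstruction would make the proposed ordering impossible --- and you give no argument for this.)

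The paper avoids these difficulties by never shrinking more than two coordinates at once, so that the error term $\sum_{j\in S}s_j$ in your expansion is at most $2$: it stratifies the deep vertices by satellite mass $\sum_{k\neq i}|x_k|\in\{\le\cil-2,\ \cil-1,\ \cil\}$ and removes them in three successive steps, using $\lambda^{[x;i]}$ in Steps~I and~III and $\lambda^{[x;ij]}$ (with $j$ a carefully chosen satellite of absolute value $1$) in Step~II; in Step~III the remaining potential obstructors are argued not to be neighbours of $x$ at all, and earlier layers have already been deleted, so no circularity arises. If you want to salvage your approach, you should replace the large set $S$ by one of size at most two and control the obstructors via the total-mass bound $d({\bf{0}},z)\le r$ together with the pairwise constraints, rather than via an ordering of the deletions.
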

\begin{proof}
	It follows from Lemma \ref{xi+xj} that for $r\geq 4$, $\Gamma_n^{\alpha, r}$ is homotopy equivalent to the induced subcomplex of $\Gamma_n^{\alpha, r}$, say $\D$, on the following set of vertices:
	$$
	\{y \in V(\Gamma_n^{\alpha, r}) : |y_i| \leq \f, \ |y_j| + |y_k| \leq \cil \ \text{for all } i,j,k \in [n], j\neq k\}.
	$$
	To complete the proof, it is sufficient to show that $\D$ is homotopy equivalent to the induced subcomplex of $\D$ on the vertex set
	$$
	\{y \in V(\D) : |y_i| < \f \ \text{for all } i \in [n]\}.
	$$
	
	We shall remove the vertices from $\D$ without changing the homotopy types in three steps. In step I, we remove the vertices of type $u\in \D$ such that $|u_i|=\f$, and $\sum_{k\neq i}|u_k|\leq \cil-2$ for some $i\in [n]$. In step II,  we remove the vertices of type $v\in \D$ such that $|v_i|=\f$, and $\sum_{k\neq i}|v_k|= \cil-1$ for some $i\in [n]$. In the last step we remove the vertices of type $w\in \D$ such that $|v_i|=\f$, and $\sum_{k\neq i}|w_k|= \cil$ for some $i\in [n]$

    \smallskip
	\noindent \textbf{Step I:} Let $x$ be a vertex in $\D$ such that $|x_s| = \f$ and $\sum_{j \neq s} |x_j|\leq \cil - 2$. If $|x_k| > 1$ for some $k \neq s$, then  $|x_s| + |x_k| > \cil$, which is a contradiction. Hence, $|x_k| \leq 1$ for all $k \neq s$. Let us consider $\lambda^{[x;s]}\in V(\Z^n)$. Then, $d(x,\lambda^{[x;s]})=1$. Since $x\succ {\bf{0}}$ and $r\geq 4$, we conclude that $\lambda^{[x;s]}\succ {\bf{0}}$ and $\lambda^{[x;s]}\in V(\D)$.

	We show that $\n[x,\D]\subseteq \n[\lambda^{[x;s]},\D]$. For this, let  $z \in \n[x, \D]$,  $z \neq x$. If $|z_s| < \f$, or $|z_s| \geq  \f$ and $sgn(z_s x_s) = -1$, then by Lemma \ref{onevetex}, $d(z, \lambda^{[x;s]}) \leq r - 1$. So, assume that $|z_s| \geq  \f$ and $sgn(z_s x_s) = 1$. Now,  $|z_s| \leq  \f$ implies that $|z_s| = \f$. Then $z_s = x_s$, $|z_k| \leq 1$ for all $k \neq s$, and $\sum_{k \neq s} |z_k| \leq \cil$. Therefore,
	$$
	\begin{aligned}
		d(z, \lambda^{[x;s]}) &= |z_s - \lambda^{[x;s]}_s| + \sum_{k \neq s} |z_k - \lambda^{[x;s]}_k| \\
		&= 1 + \sum_{k \neq s} |z_k - x_k| \leq 1 + \sum_{k \neq s} |z_k| + \sum_{k \neq s} |x_k| \leq 1 + \cil + (\cil - 2) \leq r.
	\end{aligned}
	$$
	Thus, $z \in \n[\lambda^{[x;s]}, \D]$. Hence $\n[x,\D]\subseteq \n[\lambda^{[x;s]},\D]$ and therefore $\D \simeq \D \setminus x$. 
	
	Let $\A=\{a\in \D: |a_i|=\f\ \text{and}\ \sum_{k\neq i}|a_k|\leq \cil-2 \ \text{for some} \ i\in[n]\}$. By repeating the above process for each $a\in \A$, we find that $\D \simeq \D_1$, where $\D_1 = \mathrm{Ind}_{\D}(V(\D) \setminus \A)$. 
	
    \smallskip
	\noindent 
	\textbf{Step II:} Let $p \in V(\D_1)$ such that $|p_s| = \f$ and $\sum_{j \neq s} |p_j| = \cil - 1$. Since $r \geq 4$, there exists $j \neq s$ such that 
	$|p_j| \neq 0$. If $r$ is even, then $|p_s| + |p_j| >  \cil = \frac{r}{2}$, which is a contradiction as $p \in \Delta$. Therefore, $r$ must be odd and $r \geq 5$. 
	Since $|p_s|+|p_k|\leq \cil$ for all $k \neq s$, it follows that $|p_k|\leq 1$ for all $k \neq s$. Since $r\geq 5$, there exists $j,l\in [n] \setminus \{s\}$ such that  $j<l$ and $|p_j|  = |p_l|=1$. 
	
	Let us take $\lambda^{[p;sj]}\in V(\Z^n)$. Clearly, $d(p,\lambda^{[p;sj]})=2$. 
	Let $p_{i_0}$ be the last non-zero entry of $p$. Since $p\succ {\bf{0}}$, we must have $p_{i_0}>0$. Since $|p_l|=1$ and $l>j$, we have $i_0>j$. Now, if $i_0=s$, then, $\lambda^{[p;sj]}_s>0$ and $\lambda^{[p;sj]}_s$ is the last non-zero entry of $\lambda^{[p;sj]}$. If $i_0\neq s$, then $p_{i_0}=\lambda^{[p;sj]}_{i_0}>0$. Therefore, we conclude that $\lambda^{[p;sj]}\succ {\bf{0}}$. Since $|\lambda^{[p;sj]}_s|\leq \f-1$, and $\sum_{k\neq s}|\lambda^{[p;sj]}_s|< \cil-1$, it follows that $\lambda^{[p;sj]}\in V(\D_1)$. We first show that  $\n[p, \D_1] \subseteq \n[\lambda^{[p;sj]}, \D_1]$.

	Let $q \in \n[p, \D_1]$, $q \neq p$. If $|q_s| < |p_s|$ or $|q_j| < |p_j|$, then by Lemma \ref{twovertex}, $d(q, \lambda^{[p;sj]}) \leq r$. If $|q_s| \geq |p_s|$ and $sgn(q_s p_s) = -1$, or $|q_j| \geq |p_j|$ and $sgn(q_j p_j) = -1$, then again by Lemma \ref{twovertex}, $d(q, \lambda^{[p;sj]}) \leq r$.
	
	Now, consider the case when $|q_t| \geq |p_t|$ and $sgn(q_t p_t) = 1$ for $t\in\{s,j\}$. Since $|q_s|+|q_j|\leq \cil=|p_s|+|p_j|$,  we get $|q_s| = |p_s|$ and $|q_j| = |p_j|$. This implies that $q_s= p_s$ and $q_j = p_j$.
    Since  $\sum_{k \neq s,j} |q_k| \leq \f$, we get  
	$$
	\begin{aligned}
		d(q, \lambda^{[p;sj]}) &= |q_s - \lambda^{[p;sj]}_s| + |q_j - \lambda^{[p;sj]}_j| + \sum_{k \neq s,j} |q_k - \lambda^{[p;sj]}_k|\\ 
		&= 1 + 1 + \sum_{k \neq s,j} |q_k| + \sum_{k \neq s,j} |\lambda^{[p;sj]}_k| \leq 2 + \f + (\cil - 2) \leq r  .
	\end{aligned}
	$$
	Hence, $q \in \n[\lambda^{[p;sj]}, \D_1]$. Thus $\n[p, \D_1] \subseteq \n[\lambda^{[p;sj]}, \D_1]$. From Proposition \ref{flag},  $\D_1 \simeq \D_1 \setminus p$. 
	Let $B=\{a\in V(\D_1): |a_s|=\f\ \text{and}\ \sum_{k\neq s}|a_k|=\cil -1 \ \text{some some}\ s\in [n]\}$. By repeating the same argument for every vertex $b\in B $,  we get that $\D_1 \simeq \Delta_2$, where $ \Delta_2 = \mathrm{Ind}_{\D_1}(V(\D_1) \setminus B)$.

    \smallskip
	\noindent 
	\textbf{Step III:}
	Let $w \in V(\D_2)$ with $|w_l| = \f$ and $\sum_{k \neq l} |w_k| = \cil$.  If $r$ is even, then $|p_s| + |p_j| >  \cil = \frac{r}{2}$ for some $j \neq s$,  a contradiction as $p \in \Delta$. Thus, $r$ must be odd and $r\geq 5$.
	
	For each $k \neq l$, $|w_l|+ |w_k| \leq \cil$ implies that $|w_k| \leq 1$.  Let us consider $\lambda^{[w;l]}\in V(\Z^n)$. Let $w_{j_0}$ be the last non-zero entry of $w$. Since $w\succ {\bf{0}}$, we have $w_{j_0}>0$. If $j_0=l$, then clearly $\lambda^{[w;l]}_{j_0}>0$ and $\lambda^{[w;l]}_{j_0}$ is the last non-zero entry. If ${j_0}\neq l$, then 
	$w_{j_0}=\lambda^{[w;l]}_{j_0}$. This implies that $\lambda^{[w;l]}_{j_0}>0$ and $\lambda^{[w;l]}_{j_0}$ is the last non-zero entry.  Thus, we conclude that $\lambda^{[w;l]}\succ {\bf{0}}$. Since $|\lambda^{[w;l]}_l|\leq \f-1$ and $\sum_{ l\neq j} |\lambda^{[w;l]}_j|=\cil$, and $|\lambda^{[w;l]}_k|\leq 1$ for all $k\neq l$, we find that $\lambda^{[w;l]}\in V(\D_2)$. We claim that $\n[w, \D_2] \subseteq \n[\lambda^{[w;l]}, \D_2]$.
	
	Let $v\in \n[w, \D_2]$ with $v \neq w$. Note that if $|v_l| < \f$, then $d(v, \lambda^{[w;l]}) \leq r - 1$, and hence $v\in \n[\lambda^{[w;l]}, \D_2]$. So assume that $|v_l| = \f$.  Now, if $sgn(v_l w_l) = -1$, then $d(v, \lambda^{[w;l]}) \leq r$ and we are done. 
	
	Suppose $sgn(v_l w_l) = 1$. Then $v_l = w_l$. If $\sum_{j \neq l} |v_j| \leq \cil-1$,  then $v \notin \D_2$, so we must have $\sum_{k \neq l} |v_j| = \cil$.

	Let $S = \{j \in [n] \setminus \{l\} : w_j, v_j \neq 0 \ \text{and} \ sgn(v_j w_j) = -1  \}$. Since for each $j \neq l , |v_j| \leq 1$, we have 
	$
	\sum_{j \in S} |v_j - w_j| = 2 \cdot \mathrm{Card}(S),
	$
	and for $j \notin S \cup \{l\}$, either $w_j = 0$ or $v_j = 0$. Therefore,
	$$
	\sum_{j \notin S \cup \{l\}} |w_j| = \sum_{j \notin S \cup \{l\}} |v_j| = \cil - \mathrm{Card}(S).
	$$
	Hence,
	$$
	\begin{aligned}
		d(v, w) &= |v_l - w_l| + \sum_{j \in S} |v_j - w_j| + \sum_{j \notin S \cup \{l\}} |w_j - v_j| \\
		&= 0 + 2\cdot \mathrm{Card}(S) + 2(\cil - \mathrm{Card}(S)) = 2\cil = r + 1,
	\end{aligned}
	$$
	which contradicts $v \in \n[w,\D_2]$. Therefore, $sgn(v_l w_l)$ cannot be 1. Hence, we conclude that $\n[w, \D_2] \subseteq \n[\lambda^{[w;l]}, \D_2]$.
	
	From Proposition \ref{flag}, we have $\D_2 \simeq \D_2 \setminus w.$ Let $C=\{c\in V(\D_2): |c_i|=\f\ \text{and}\ \sum_{j\neq i}|c_j|=\cil\ \text{for some } i\in [n]\}$. By repeating the same above arguments for each $c\in C$, we find that  $\D_2 \simeq \mathrm{Ind}_{\D_2}(V(\D_2) \setminus C).$ 
	
	Therefore, for $r\geq 4$, $\Gamma_n^{\alpha,r}\simeq \D \simeq \D_1 \simeq \D_2\simeq \mathrm{Ind}_{\D_2}(V(\D_2) \setminus C)$, and this completes the proof.
\end{proof}

\begin{lemma}\label{threevertex}
	Let $r\geq 2$, and let  $\D$ be  a subcomplex of $\Gamma_n^{\alpha, r}$ that contains the vertex $x$. Then for any two vertices $y$ and $z$ in $\D$, where $z\in V(\lk(x,\D))$ and  $|y_l|=|x_l|-1$ for $l\in\{i,j,k\}$ and $y_l=x_l$ for $l\in [n]\setminus\{i,j,k\}$, we have the following: 
	\begin{itemize}
		\item[(i)]  If $|z_l|<|x_l|$ for at least two choices of $l\in\{i,j,k\}$,  then $d(z,y)\leq  r-1$.
		\item[(ii)]  Let $|z_k|<|x_k|$. If $|z_i|\geq |x_i|$ and $sgn(z_ix_i)=-1$,  or $|z_j|\geq |x_j|$ and $sgn(z_jx_j)=-1$, then $d(z,y)\leq  r$.
		\item[(iii)] If $|z_l|\geq |x_l|$ for all $l\in\{i,j,k\}$ and $sgn(z_sx_s)=-1$ for at least two choices of $s\in\{i,j,k\}$, then $d(z,y)\leq r$.
	\end{itemize}
	
\end{lemma}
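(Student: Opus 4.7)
The plan is to analyze $d(z,y)$ coordinate-by-coordinate and compare it with $d(z,x)$, which is bounded by $r$ since $z\in V(\lk(x,\D))$. Note that the hypothesis $|y_l|=|x_l|-1$ forces $x_l\neq 0$ for each $l\in\{i,j,k\}$, and consequently $y_l=x_l-\mathrm{sgn}(x_l)$. For every coordinate $l\in[n]\setminus\{i,j,k\}$ we have $|z_l-y_l|=|z_l-x_l|$, so these coordinates contribute equally to $d(z,y)$ and $d(z,x)$; only the three coordinates in $\{i,j,k\}$ matter.

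The first key step is to establish, for each $l\in\{i,j,k\}$, the following three mutually exclusive identities by a short sign-chase that splits on whether $x_l>0$ or $x_l<0$: (a) if $|z_l|<|x_l|$, then $|z_l-y_l|=|z_l-x_l|-1$; (b) if $|z_l|\geq|x_l|$ and $sgn(z_lx_l)=-1$, then $|z_l-y_l|=|z_l-x_l|-1$; (c) if $|z_l|\geq|x_l|$ and $sgn(z_lx_l)=+1$, then $|z_l-y_l|=|z_l-x_l|+1$. In fact, identities (a) and (b) are precisely the computations already carried out in the proofs of Lemmas~\ref{onevetex} and~\ref{twovertex}, so one can cite those; identity (c) is immediate since in that situation $z_l$ and $x_l$ lie on the same side of $0$, and $y_l$ lies one step closer to $0$ than $x_l$. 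Independently of all this, the triangle inequality always yields the crude bound $|z_l-y_l|\leq|z_l-x_l|+1$.

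With these identities in hand, each part of the lemma reduces to bookkeeping. For (i), at least two of the three coordinates $l\in\{i,j,k\}$ satisfy (a) and thus each contributes $-1$, while the third contributes at most $+1$ by the triangle inequality, giving
\[
d(z,y)\;\leq\;d(z,x)+(-1)+(-1)+(+1)\;=\;d(z,x)-1\;\leq\;r-1.
\]
For (ii), coordinate $k$ contributes $-1$ by (a), whichever of $i,j$ satisfies the sign-reversal hypothesis contributes $-1$ by (b), and the remaining coordinate in $\{i,j,k\}$ contributes at most $+1$, so $d(z,y)\leq d(z,x)-1\leq r-1\leq r$. For (iii), two of the three coordinates in $\{i,j,k\}$ contribute $-1$ each by (b), and the third contributes exactly $+1$ by (c), so again $d(z,y)\leq d(z,x)-1\leq r-1\leq r$.

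The only real obstacle is keeping the sign bookkeeping clean when verifying identities (a)--(c); everything else is a routine summation. Since (a) and (b) are essentially restatements of what is proved in Lemmas~\ref{onevetex} and~\ref{twovertex}, the new work is limited to identity (c) and the three short tallies above.
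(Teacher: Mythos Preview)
Your proposal is correct and follows essentially the same coordinate-by-coordinate comparison as the paper's proof: both reduce $d(z,y)-d(z,x)$ to a tally of the three special coordinates via the identities you label (a), (b), (c). One minor slip: in part~(iii) you write that the third coordinate ``contributes exactly $+1$ by (c)'', but the hypothesis says \emph{at least} two coordinates have $sgn(z_sx_s)=-1$, so the third could also fall under (b) and contribute $-1$; this only sharpens the bound, so the conclusion is unaffected.
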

\begin{proof}
	\begin{itemize}
		\item[(i)] 
		Without loss of generality, we assume that $|z_i|<|x_i|$ and $|z_j|<|x_j|$. Then,  $|z_i - y_i| = |z_i - x_i| - 1$ and $|z_j - y_j| = |z_j - x_j| - 1$. 
		Since $|y_k|=|x_k|-1$, it follows that $|z_k-y_k|\leq |z_k-x_k|+1$. Therefore, $d(z,y) = |z_i - y_i| + |z_j - y_j| +|z_k-x_k|+ \sum_{l \neq i,j,k} |z_l - y_l|,$ which gives $d(z,y)\leq (|z_i - x_i| - 1) + (|z_j - x_j| - 1) + (|z_k-x_k|+1)+\sum_{l \neq i,j,k} |z_l - x_l| \leq d(z,x)-1\leq r-1$. 
		
		\item[(ii)] Let $|z_k|<|x_k|$. Then $|z_k-y_k|=|z_k-x_k|-1$. If $|z_i|\geq |x_i|$ and $sgn(z_ix_i)=-1$, then $|z_i-y_i|=|z_i-x_i|-1$. Since $|y_j|=|x_j|-1$, $|z_k-y_k|\leq |z_k-x_k|+1$. Hence, a similar calculation as above shows that $d(z,y)\leq r$. Similarly, the result follows when $|z_j|\geq |x_j|$ and $sgn(z_jx_j)=-1$. 
		
		\item[(iii)] It is given that $|z_l|\geq |x_l|$ for all $l\in\{i,j,k\}$. Now, assume that $sgn(z_ix_i)=sgn(z_jx_j)=-1$. Then $|z_i-y_i|=|z_i-x_i|-1$, and $|z_j-y_j|=|z_j-x_j|-1$. Since $|y_k|=|x_k|-1$, regardless of whether $|z_k|<|x_k$ or $|z_k|\geq |x_k|$, we find that $|z_k-y_k|\leq |z_k-x_k|+1$. 
		Now, a similar computation as in part $(i)$ proves the result.
	\end{itemize}
    \vspace{- 0.4 cm}
\end{proof}

\begin{lemma}\label{sumofthree}
	Let $r\geq n\geq 4$ . Then $\Gamma_n^{\alpha, r}$ is homotopy equivalent to the induced subcomplex $\Delta'$ of $\Gamma_n^{\alpha, r}$, where every $x \in V(\Delta')$ satisfies the following: (i) $|x_i|< \f, |x_j|+|x_k|\leq \cil$ for all $i, j, k \in [n], j \neq k$, and (ii) $|x_i| + |x_j| + |x_k| < r-1$ for all distinct $i, j, k \in [n]$. 
\end{lemma}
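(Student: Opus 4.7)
The plan builds on Lemma \ref{lessthanfloor}, which yields $\Gamma_n^{\alpha,r} \simeq \Delta$, where $\Delta$ is the induced subcomplex of $\Gamma_n^{\alpha,r}$ on vertices satisfying condition (i). I then need to remove from $\Delta$, without changing the homotopy type, every $x$ with $|x_i| + |x_j| + |x_k| \geq r-1$ for some distinct triple $\{i,j,k\}$. Following the pattern of Lemmas \ref{r/2}, \ref{xi+xj}, and \ref{lessthanfloor}, I would fix the lexicographic order on triples $(i,j,k)$ with $i < j < k$, and for each triple perform a reverse induction on the value $|x_i| + |x_j| + |x_k|$, starting from its maximum (at most $r$) and descending to $r-1$. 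Each bad vertex will be discarded by exhibiting a dominating vertex and applying Proposition \ref{flag}.

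For a bad vertex $x$, condition (i) combined with $r \geq 4$ forces $x_i, x_j, x_k$ to all be nonzero: if any were zero, the triple sum would be at most $\cil \leq r - 2$, contradicting badness. The natural candidate dominator is $\lambda^{[x;ijk]}$, which decreases each of $|x_i|, |x_j|, |x_k|$ by exactly one, so $d(x, \lambda^{[x;ijk]}) = 3$ and condition (i) is preserved at $\lambda^{[x;ijk]}$. I would verify $\lambda^{[x;ijk]} \succ {\bf{0}}$ by inspecting the largest index carrying a nonzero coordinate of $x$; the only obstruction is when $x$ is supported entirely on $\{i,j,k\}$ with $|x_i| = |x_j| = |x_k| = 1$, in which case $\lambda^{[x;ijk]} = {\bf{0}}$. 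This edge case only arises when $r - 1 \leq 3$, i.e., $r = 4$, and I would treat it separately by switching to a smaller dominator of the form $\lambda^{[x;ij]}$, mimicking Case (ii) in the proof of Lemma \ref{xi+xj}.

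Next I would show $\n[x, \Delta] \subseteq \n[\lambda^{[x;ijk]}, \Delta]$. Given $z \in \n[x, \Delta] \setminus \{x\}$, the three parts of Lemma \ref{threevertex} already cover all neighbor configurations except the residual one in which $|z_l| \geq |x_l|$ and $\mathrm{sgn}(z_l x_l) = 1$ for at least two of the indices in $\{i,j,k\}$. In this configuration, tightness from condition (i) bounds $|z_a|+|z_b| \leq \cil$ combined with $|x_i|+|x_j|+|x_k| \geq r-1$ forces $z_l = x_l$ on those matching indices (and in fact on all three of $i,j,k$), after which $d(z, \lambda^{[x;ijk]})$ splits as a contribution of $3$ from the controlled coordinates plus a tail of $\sum_{m \notin \{i,j,k\}} |z_m - x_m|$. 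Since both $z$ and $x$ carry $l_1$-mass at most $r - (r-1) = 1$ outside $\{i,j,k\}$, the tail is at most $2$, giving $d(z, \lambda^{[x;ijk]}) \leq 5 \leq r$ whenever $r \geq 5$. Iterating over all bad vertices within each triple, and then over all triples in lex order, gives the desired homotopy equivalence.

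The main obstacle is the boundary case $r = 4$, where the crude tail estimate only yields $d(z, \lambda^{[x;ijk]}) \leq 5$ rather than $\leq 4$. Here I would switch to a lower-arity dominator such as $\lambda^{[x;ij]}$, which only moves two coordinates and hence builds in additional slack, and cross-check that it survives the ${\bf{0}}$-obstruction described above. Coordinating the sign constraints, the tightness from condition (i), and the bad-sum hypothesis $|x_i|+|x_j|+|x_k| \geq r-1$ in this small-$r$ regime is the most delicate piece of the bookkeeping; every other part of the argument is a direct analogue of the proofs of Lemmas \ref{xi+xj} and \ref{lessthanfloor}.
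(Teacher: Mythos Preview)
Your overall scaffolding---start from Lemma~\ref{lessthanfloor} and dominate bad vertices via Proposition~\ref{flag}---matches the paper. But you miss a structural rigidity that makes the paper's proof much shorter and, more importantly, makes your $r=4$ plan unworkable.

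From condition~(i) alone, any bad vertex is forced to satisfy $|x_i|=|x_j|=|x_k|=\f-1$: since $|x_a|+|x_b|\le\cil$ for every pair and $|x_c|<\f$, the triple sum can never reach $r$, and equalling $r-1$ pins each pair-sum to $\cil$ and each coordinate to $\f-1$. The equation $3\f-3=r-1$ then has integer solutions only for $r\in\{4,7\}$. So your reverse induction on the triple sum and your iteration over lexicographic triples are both unnecessary: for $r\notin\{4,7\}$ there is nothing to remove, and for $r\in\{4,7\}$ the bad vertices have a single rigid shape.

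For $r=7$ your dominator $\lambda^{[x;ijk]}$ does work, though your residual-case sketch is incomplete: you only treat the sub-case where all three coordinates match with positive sign, whereas you also need the sub-case with exactly one $|z_l|<|x_l|$ (and the other two matching) and the sub-case with exactly one opposite sign. The paper handles these explicitly in Claim~\ref{claim1}; your tail bound of $5$ does not cover them.

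For $r=4$ your proposed fix fails outright. Here $n=4$ and every bad vertex has $|x_l|=1$ on its triple. Take $x=(-1,1,1,0)$ (bad for the triple $\{1,2,3\}$) and $z=(-1,1,-1,1)\in V(\Delta)$ with $d(z,x)=3$. Then $\lambda^{[x;12]}=(0,0,1,0)$ gives $d(z,\lambda^{[x;12]})=5>4$. One checks similarly that every choice $\lambda^{[x;S]}$ with $S\subsetneq\{1,2,3\}$ either lies below ${\bf 0}$ or fails to dominate (e.g.\ $\lambda^{[x;2]}=(-1,0,1,0)$ is beaten by $(1,1,0,1)$). No local dominator of the $\lambda^{[x;\cdot]}$ type works here. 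The paper instead uses a single \emph{global} vertex---either $(0,0,0,1)$ or $(0,0,1,0)$, according to whether $\Delta$ contains a vertex with positive fourth coordinate---and shows it lies within distance $4$ of every vertex of $\Delta$, hence dominates every bad $x$ simultaneously. That global choice is the idea your plan is missing.
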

\begin{proof}
	From Lemma \ref{lessthanfloor}, $\Gamma_n^{\alpha, r}$ is homotopy equivalent to the induced subcomplex of $\Gamma_n^{\alpha, r}$, say, $\D$, on the vertex set  $\{x\in \Gamma_n^{\alpha, r}: |x_i|< \f,  |x_j|+|x_k|\leq \cil \ \text{for all } \ i,j,k\in [n], j\neq k \}$.
	Let $A=\{x\in \D:|x_i| + |x_j| + |x_k| \geq r-1 \ \text{for some distinct}\  i,j,k \}$.
	We show that $\D\simeq \mathrm{Ind}_{\Delta}(V(\D)\setminus A)$.

	Let $z \in A$. Then there exist distinct $i, j, k$ such that $|z_i|+|z_j|+|z_k| \geq r-1$. Without loss of generality, we assume that $i<j<k$. Since $z \in \Delta$, we have
	$|z_i|+|z_j|\leq \cil$ and $|z_k|<\f$, which implies that  $|z_i|+|z_j|+|z_k| \neq  r$. Hence $|z_i|+|z_j|+|z_k| =  r-1$, and  $\sum_{l \neq i,j,k} |z_l| \leq 1$.
	
	If $|z_i| + |z_j| < \cil$, then  $|z_k| < \f$ implies that $|z_i| + |z_j| + |z_k| < r - 1$, which is a contradiction. Hence $|z_i| + |z_j| = \cil$ and $|z_k| = \f - 1$.
	Similarly, we deduce that $|z_j| + |z_k| = \cil$, $|z_i| + |z_k| = \cil$, and $|z_i| = |z_j| = \f - 1$. This  implies that $3 \f -3 = r-1$ and hence $r \in \{4, 7\}$.
	
	\medskip
	
	\noindent\textbf{Case (i):} $r=4$.

	\noindent Here, $|z_i|= |z_j| = |z_k| = 1$, and since $r\geq n\geq 4$, we have $n=4$. Since $i<j<k$ and $z\succ {\bf{0}}$, it follows that $z_k=1$, and  $3\leq k\leq 4$. 
	Let $\D$ contains a vertex $a$ such that $a_4>0$. In this case, let $\lambda\in V(\Z^n)$ be defined by $\lambda_4 = 1$ and $\lambda_l = 0$ for all $l \neq 4$. Then $\lambda\in \D$.
	Let $y\in \n[z,\D]$. Then from the fact that $|y_{l}|<2$ for any $l\in [4]$,  we have $|y_l| \leq 1$ for all $l\in [4]$. 
	
	Therefore, $d(y, \lambda) \leq 4$.
	Hence, $y\in \n[\lambda,\D]$. Thus, $\n[z,\D]\subseteq \n[\lambda,\D]$. From Proposition \ref{flag}, $\D\simeq \D \setminus z$. By repeating the similar argument as above for each element of $A$, we conclude that  $\D\simeq \mathrm{Ind}_{\D}(V(\D)\setminus A)$.
	
	Assume that there is no element $b\in \D$ such that $b_4>0$. This implies that $k=3$, $|z_1|=|z_2|=1$, $z_3=1$, and $z_4=0$.

	Consider a vertex $\lambda' \in V(\Z^n)$ such that $\lambda'_3 = 1$ and $\lambda'_l=0$ for $l\neq 3$. Then $\lambda'\in \D$, and 
	for any vertex $u\in\n[z,\D]$, $d(u,\lambda')\leq 4$. Thus, $u\in \n[\lambda',\D]$. Hence $\n[z,\D]\subseteq \n[\lambda',\D]$. From Proposition \ref{flag}, $\D\simeq \D\setminus z$. By repeating the same process for each $a\in A$, we get that $\D\simeq \mathrm{Ind}_{\D}(V(\D)\setminus A)$.

	\noindent\textbf{Case (ii):} $r = 7$.
	
	\noindent Here, $|z_i| = |z_j| = |z_k| = 2$.
	Consider the element  $\lambda^{[z;ijk]}\in V(\Z^n)$. Clearly,  $d(z,\lambda^{[z;ijk]})=3$ and  $|z_l-\lambda^{[z;ijk]}_l|=1$ for $l\in \{i,j,k\}$.
	Since $z\succ {\bf{0}}$, there exists $s\geq k$ such that $z_{s}>0$ and $z_{t}=0$ for all $t > s$. Since $|z_k|=2$, $|\lambda^{[z;ijk]}_k|=|z_k|-1$, and $\lambda^{[z;ijk]}_l=z_l$ for all $l>k$, we see that $\lambda^{[z;ijk]}_{s}>0$ and $\lambda^{[z;ijk]}_{t}=0$ for all $t > s$. Thus, $\lambda^{[z;ijk]}\succ {\bf{0}}$. Since $|\lambda^{[z;ijk]}_l|= |z_l|-1$ for all $l\in \{i,j,k\}$, and $\sum_{l\neq i,j,k}|\lambda^{[z;ijk]}_l|\leq 1$, we see that $|\lambda^{[z;ijk]}_{i_0}|+|\lambda^{[z;ijk]}_{j_0}|+|\lambda^{[z;ijk]}_{k_0}|\leq 3$ for any $i_0,j_0,k_0\in [n]$. Hence $\lambda^{[z;ijk]} \in \D$.
	
	\begin{claim}\label{claim1}
		$\n[z,\D] \subseteq  \n[\lambda^{[z;ijk]},\D]$.
	\end{claim}
	
	\noindent{\em Proof of Claim \ref{claim1}:} Clearly, $z\in \n[\lambda^{[z;ijk]},\D]$. Let $w\in \n[z,\D]$ with $z\neq w$. 
	On the contrary, suppose that $w\notin \n[\lambda^{[z;ijk]}, \D]$. Then  $d(w, \lambda^{[z;ijk]}) > 7$.  From Lemma \ref{threevertex} $(i)$, $|w_l| < |z_l|$ for at most one $l \in \{i, j, k\}$.
	
	We first consider the case that there exists $l \in \{i, j, k\}$ such that  $|w_l| < |z_l|$. If  $|w_i| < |z_i|$, then from Lemma \ref{threevertex} $(ii)$, we have $|w_j| \geq |z_j|$, $sgn(w_jz_j)=1$, $|w_k| \geq |z_k|$,  and $sgn(w_kz_k)=1$. Since $|w_j| + |w_k| \leq 4$  and $|z_j|+|z_k|=4$, we get that $w_j = z_j$ and $w_k = z_k$.  Further, $d({\bf{0}}, w) \leq 7$ implies that  $\sum_{l \neq j,k} |w_l| \leq 3$. Therefore,
	\begin{align*}
		d(w, \lambda^{[z;ijk]}) &= |w_i - \lambda^{[z;ijk]}_i| + |w_j - \lambda^{[z;ijk]}_j| + |w_k - \lambda^{[z;ijk]}_k| + \sum_{l \neq i,j,k} |y_l - \lambda^{[z;ijk]}_l| \\
		&= |w_i - z_i| - 1 + |w_j - z_j| + 1 + |w_k - z_k| + 1 + \sum_{l \neq i,j,k} |w_l - z_l| \\
		&\leq |z_i| + 1 + \sum_{l \neq j,k} |w_l| + \sum_{l \neq i,j,k} |z_l|\leq 2 + 1 + 3 + 1 = 7.
	\end{align*}
	This implies that $d(w,\lambda^{[z;ijk]})\leq 7$, which is a contradiction to our assumption. Similarly, if $|w_j| < |z_j|$ or $|w_k| < |z_k|$, then we arrive at a contradiction.
	
	Now, assume that $|w_l| \geq |z_l|$ for all $l \in \{i, j, k\}$. Since $|z_i|=|z_j|=|z_k|=2$ and $w \in \Delta$, we have $|w_l| = 2$ for $l \in \{i, j, k\}$. Consequently, it follows that $sgn(w_lz_l)=-1$ for at most one $m \in \{i, j, k\}$; otherwise, a similar computation as in part $(iii)$ of Lemma \ref{threevertex} yields that $d(w, \lambda^{[z;ijk]}) \leq 7$, which is a contradiction.

	Without loss of generality, we assume that $sgn(w_iz_i)=-1$, and $w_j = z_j$, $w_k = z_k$. Therefore, we have $|w_i - \lambda^{[z;ijk]}_i| = |w_i - z_i| - 1, \quad |w_j - \lambda^{[z;ijk]}_j| = |w_j - z_j| + 1 = 1, \quad |w_k - \lambda^{[z;ijk]}_k| = 1,$ and $\sum_{l \neq i,j,k} |w_l| \leq 1$, implying that $d(w, \lambda^{[z;ijk]}) \leq 7$, which is a contradiction.
	
	Thus, we left with the only case that  $sgn(w_lz_l)=1$, for all $l \in \{i, j, k\}$. This implies that $w_l = z_l$ for all $l \in \{i, j, k\}$.
	Then, clearly,  $\sum_{l \neq i,j,k} |w_l|  \leq 1$, and $\sum_{l \neq i,j,k} |z_l|  \leq 1$ implies that $d(w, \lambda^{[z;ijk]}) \leq 5$.   Hence, $d(w,\lambda^{[z;ijk]})< 7$ and thus $w\in \n[\lambda^{[z;ijk]},\D]$. This is a contradiction.
	
	Hence, $w\in \n[\lambda^{[z;ijk]},\D]$. Thus, $\n[z,\D]\subseteq \n[\lambda^{[z;ijk]},\D]$. This completes the proof of Claim \ref{claim1}.

	Using Claim \ref{claim1} and  Proposition \ref{flag},  we get $\D\simeq \D \setminus z$. By using the similar argument  for each $a\in A$, we get that $\D\simeq \mathrm{Ind}_{\D}(V(\D)\setminus A)$.
\end{proof}

\begin{lemma}\label{fourvertex}
	Let $r\geq 4$, and let  $\D$ be  a subcomplex of $\Gamma_n^{\alpha, r}$ that contains the vertex $x$. Then for any two $y, z \in V(\D)$, where $z\in V(\lk(x,\D))$ and  $|y_t|=|x_t|-1$ for $t\in\{i,j,k,l\}$ and $y_s=x_s$ for $s\in [n]\setminus\{i,j,k,l\}$, we have the following: 
	\begin{enumerate}[label=(\roman*)]
		\item If $|z_s|<|x_s|$ for at least two choices of $s\in\{i,j,k,l\}$,  then $d(z,y)\leq r$. 
		\item Let $|z_s|<|x_s|$ for some $s \in \{i, j, k, l\}$. If there exists $t \in \{i, j, k, l\} \setminus \{s\}$ such that $|z_t|\geq |x_t|$ and $sgn(z_tx_t)=-1$, then $d(z,y)\leq  r$. 
		\item If $|z_t|\geq |x_t|$ for all $t\in\{i,j,k,l\}$, and  $sgn(z_sx_s)=-1$  for at least two indices $s\in\{i,j,k,l\}$, then $d(z,y)\leq r$.
	\end{enumerate}
	
\end{lemma}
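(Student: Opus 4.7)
The plan is to prove all three parts simultaneously via a uniform coordinate-wise bookkeeping argument, exactly in the spirit of Lemmas \ref{onevetex}, \ref{twovertex}, and \ref{threevertex}. For every $s \notin \{i,j,k,l\}$ one has $|z_s - y_s| = |z_s - x_s|$, so the entire difference $d(z,y) - d(z,x)$ is contributed by the four special indices. For each $t \in \{i,j,k,l\}$ the coordinate $y_t$ has the same sign as $x_t$ and satisfies $|y_t| = |x_t| - 1$; a short sign case-analysis (separating $x_t > 0$ from $x_t < 0$) shows that $|z_t - y_t| - |z_t - x_t| \in \{-1, +1\}$. The value is $-1$ precisely when either $|z_t| < |x_t|$, or $|z_t| \geq |x_t|$ with $\mathrm{sgn}(z_t x_t) = -1$; otherwise (i.e.\ when $|z_t| \geq |x_t|$ and $\mathrm{sgn}(z_t x_t) = 1$) it is $+1$. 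Call an index of the first kind \emph{good} and one of the second kind \emph{bad}.

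Let $g$ and $b$ denote the number of good and bad indices among $\{i,j,k,l\}$; then $g + b = 4$ and
\begin{equation*}
    d(z, y) \; = \; d(z, x) \, + \, b - g.
\end{equation*}
Since $z \in V(\lk(x, \D))$ forces $d(z, x) \leq r$, proving $d(z, y) \leq r$ reduces to showing $b \leq g$, which, because $g + b = 4$, is just the inequality $g \geq 2$.

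Verifying $g \geq 2$ in each of the three hypotheses is then direct. In case (i), the at least two indices $s \in \{i,j,k,l\}$ with $|z_s| < |x_s|$ are all good by definition. In case (ii), the index $s$ with $|z_s| < |x_s|$ is good, and the distinct index $t$ with $|z_t| \geq |x_t|$ and $\mathrm{sgn}(z_t x_t) = -1$ is also good, yielding $g \geq 2$. In case (iii), every $t \in \{i,j,k,l\}$ satisfies $|z_t| \geq |x_t|$, and by hypothesis at least two of them have $\mathrm{sgn}(z_t x_t) = -1$, which makes them good. Thus in all three scenarios $g \geq 2$, and the conclusion $d(z,y) \leq r$ follows.

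The only delicate point is the sign case-analysis that establishes the $\{-1, +1\}$ dichotomy and its characterization; once that is in place, the argument collapses to a trivial counting step, so no genuine obstacle arises beyond routine verification.
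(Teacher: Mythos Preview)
Your proof is correct and follows essentially the same coordinate-wise bookkeeping as the paper's own proof of this lemma (and of the analogous Lemmas \ref{onevetex}, \ref{twovertex}, \ref{threevertex}). The only difference is organizational: the paper treats each of (i), (ii), (iii) separately, fixing particular indices and writing out the relevant inequalities $|z_t - y_t| = |z_t - x_t| \pm 1$ explicitly, whereas you abstract this into a uniform good/bad dichotomy and reduce all three parts to the single counting statement $g \geq 2$; the underlying observations are identical.
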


\begin{proof} 
	
	\begin{enumerate}[label=(\roman*)]
		\item Without loss of generality, assume that $|z_i|<|x_i|$ and $|z_j|<|x_j|$. Thus,
		$|z_i - y_i| = |z_i - x_i| - 1$ and  $|z_j - y_j| = |z_j - x_j| - 1$. 
		Since $|y_t|=|x_t|-1$ for $t\in\{i,j,k,l\}$, and $d(y,x)=4$, we see that $|z_k-y_k|\leq |z_k-x_k|+1$, and $|z_l-y_l|\leq |z_l-x_l|+1$. Therefore, $d(z,y)\leq (|z_i - x_i| - 1) + (|z_j - x_j| - 1) + (|z_k-x_k|+1)+ (|z_l-x_l|+1)+\sum_{t \neq i,j,k,l} |z_t - x_t| \leq d(z,x)=r$. This proves $(i).$
		
		\item Without loss of generality, assume that $|z_k| < |x_k|$, and $|z_i| \geq |x_i|$ and $sgn(z_ix_i)=-1$. Then, $|z_k - y_k| = |z_k - x_k| - 1$ and, $|z_i - y_i| = |z_i - x_i| - 1$. Moreover, $|z_j - y_j| \leq |z_j - x_j| + 1$ and $|z_l - y_l| \leq |z_l - x_l| + 1$. Therefore, $d(z, y) \leq (|z_i - x_i| - 1) + (|z_j - x_j| + 1) + (|z_k - x_k| - 1) + (|z_l - x_l| + 1) + \sum_{t \neq i,j,k,l} |z_t - x_t| \leq d(z, x) = r.$
		This proves $(ii)$.
		
		\item Let $|z_t| \geq |x_t|$ for all $t \in \{i,j,k,l\}$. Without loss of generality, we assume that $sgn(z_ix_i)=sgn(z_jx_j)=-1$. This implies that $|z_i - y_i| = |z_i - x_i| - 1$ and $|z_j - y_j| = |z_j - x_j| - 1$. Also, we have $|z_k - y_k| \leq |z_k - x_k| + 1$, and $|z_l - y_l| \leq |z_l - x_l| + 1$. Now, similar computations as above show that $d(z, y) \leq r$. This proves $(iii)$.
	\end{enumerate}
    \vspace{- 0.4 cm}
	\end{proof}

\begin{lemma}\label{sumoffour}
	Let $r\geq n\geq 4$ . Then $\Gamma_n^{\alpha, r}$ is homotopy equivalent to the induced subcomplex $\Delta'$ of $\Gamma_n^{\alpha, r}$, where every $x \in V(\Delta')$ satisfies the following: (i) $|x_i|< \f, |x_j|+|x_k|\leq \cil$ for all $i, j, k \in [n], j \neq k$ (ii) $|x_i| + |x_j| + |x_k| < r-1$ for all $\{i, j, k\}\subseteq [n]$, and (iii) $|x_i| + |x_j| + |x_k| + |x_l|\leq r-1$ for all  $\{i, j, k, l\} \subseteq [n]$.
\end{lemma}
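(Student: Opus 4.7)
The plan is to follow the template of Lemma \ref{sumofthree}. By Lemma \ref{sumofthree}, $\Gamma_n^{\alpha,r}$ is homotopy equivalent to the induced subcomplex $\D$ of $\Gamma_n^{\alpha,r}$ satisfying conditions $(i)$ and $(ii)$. Let $A$ be the set of vertices of $\D$ that fail $(iii)$, i.e., those $x \in V(\D)$ for which there exists $S = \{i,j,k,l\} \subseteq [n]$ with $\sum_{t \in S} |x_t| \geq r$. I will show that $\D \simeq \mathrm{Ind}_{\D}(V(\D) \setminus A)$ by producing, for each $x \in A$, a dominator $\lambda \in V(\D) \setminus A$ with $\n[x, \D] \subseteq \n[\lambda, \D]$, and then iteratively applying Proposition \ref{flag}.

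First, I will pin down the structure of a violator $x \in A$ with witness $S$. Combining $\sum_{t \in S} |x_t| \geq r$ with $d({\bf 0}, x) \leq r$ forces this sum to equal $r$ and $x_{s'} = 0$ for all $s' \notin S$. Condition $(ii)$ (any three coordinates sum to at most $r - 2$) then implies $|x_t| \geq 2$ for every $t \in S$, and combined with $(i)$ this forces $\cil \geq 4$, hence $r \geq 8$; in particular, for $r \leq 7$ the set $A$ is empty and there is nothing to prove.

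The candidate dominator is $\lambda := \lambda^{[x;ijkl]}$, whose coordinates outside $S$ equal those of $x$ (all zero), and whose $S$-coordinates shrink by $1$ in absolute value. Since $|x_t| \geq 2$ for $t \in S$, the last nonzero coordinate of $\lambda$ lies in $S$ and remains positive, so $\lambda \succ {\bf 0}$. The bounds $|\lambda_{s'}| \leq |x_{s'}|$ guarantee that $\lambda$ still satisfies $(i)$ and $(ii)$, so $\lambda \in V(\D)$, and since $\sum_{t \in S} |\lambda_t| = r - 4 < r$ and $\lambda_{s'} = 0$ off $S$, we also have $\lambda \notin A$.

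The heart of the argument is $\n[x,\D] \subseteq \n[\lambda, \D]$. Let $z \in \n[x,\D]$, $z \neq x$. Lemma \ref{fourvertex} covers all $z$ except two residual configurations. The first is $|z_t| \geq |x_t|$ for every $t \in S$ with at most one sign flip; here $d({\bf 0}, z) \leq r$ forces $|z_t| = |x_t|$ on $S$ and $z_{s'} = 0$ off $S$, so if all signs agree one gets $z = x$ (contradiction), while if exactly one flips (say $z_i = -x_i$) a direct computation gives $d(z,\lambda) = 2|x_i| + 2 \leq r$, using $|x_i| \leq \f - 1 \leq (r-2)/2$ from $(i)$. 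The second is exactly one $|z_s| < |x_s|$ with $s \in S$ and $z_t = x_t$ for the other three $t \in S$; comparing total coordinate sums forces $|z_s| + \sum_{s' \notin S} |z_{s'}| \leq |x_s|$, so the triangle bound $|z_s - \lambda_s| \leq |z_s| + |x_s| - 1$ combined with the three unit contributions from $t \in S \setminus \{s\}$ yields $d(z,\lambda) \leq 2|x_s| + 2 \leq r$ by the same inequality. The main obstacle is the case analysis in these two residual configurations, in particular the sign-flip subcase; everything else mirrors the arguments in Lemmas \ref{xi+xj}--\ref{sumofthree}. Once the domination is established, Proposition \ref{flag} gives $\D \simeq \D \setminus x$ for each $x \in A$, and iterating (safely, since every dominator $\lambda$ lies in $V(\D)\setminus A$) produces the desired induced subcomplex $\D'$.
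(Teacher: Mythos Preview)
Your strategy matches the paper's: start from the complex $\D$ of Lemma \ref{sumofthree}, and for each $x$ with $\sum_{t\in S}|x_t|=r$ dominate it by $\lambda=\lambda^{[x;ijkl]}$ using Lemma \ref{fourvertex}. There is, however, a gap in your second residual case. After applying Lemma \ref{fourvertex}(i)--(ii), what remains when exactly one $s\in S$ has $|z_s|<|x_s|$ is only that each $t\in S\setminus\{s\}$ satisfies $|z_t|\ge|x_t|$ with $sgn(z_tx_t)=1$; the equality $z_t=x_t$ you assert does \emph{not} follow. For instance, with $n=4$, $r=13$, $x=(4,3,3,3)$ and $z=(0,4,3,3)$, one has $z\in\n[x,\D]$ but $z_2\neq x_2$; your ``three unit contributions'' become $|z_t-\lambda_t|=|z_t|-|x_t|+1$, not $1$, and the displayed computation no longer yields $2|x_s|+2$.

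The repair is easy and recovers the same bound. Write $a_t:=|z_t|-|x_t|\ge0$ for $t\in S\setminus\{s\}$; then the coordinate-sum comparison you invoke actually gives the stronger inequality
\[
|z_s|+\sum_{t\in S\setminus\{s\}}a_t+\sum_{s'\notin S}|z_{s'}|\le |x_s|,
\]
and combining this with $|z_s-\lambda_s|\le|z_s|+|x_s|-1$ and $|z_t-\lambda_t|=a_t+1$ yields $d(z,\lambda)\le 2|x_s|+2\le r$. This is exactly what the paper does (there the bound is written as $r+2+|x_p|-\sum_{t\in S\setminus\{p\}}|x_t|$ and then estimated more circuitously, but it simplifies to $2|x_p|+2$). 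Once this step is fixed, your argument is complete and essentially identical to the paper's; your remark that each dominator $\lambda$ lies outside $A$ cleanly justifies iterating the removals.
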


\begin{proof} From Lemma \ref{sumofthree}, $\Gamma_n^{\alpha, r}$ is homotopy equivalent to the induced subcomplex $\Delta$, where every $x \in V(\Delta)$ satisfies the following: (i) $|x_i|< \f, |x_j|+|x_k|\leq \cil$ for all $i, j, k \in [n], j \neq k$  and (ii) $|x_i| + |x_j| + |x_k| < r-1$ for all  $\{i, j, k \}\subseteq [n]$. 
	
	Let $x\in V(\D)$ be such that $|x_i| + |x_j| + |x_k| + |x_l| = r$ for some  $i< j<k<l$.  
	For any $t \in \{i, j, k, l\}$, if  $|x_t| \leq 1$, then $\Sigma_{s \in \{i, j, k, l\} \setminus \{t\}} |x_s| \geq r-1$, a contradiction. Hence $|x_t| \geq 2$ for all $t \in \{i, j, k, l\}$ and $x_s=0$ for $s \notin \{i, j, k, l\}$. Hence $r \geq 8$. 
	
	Now, let us consider $\lambda^{[x;ijkl]}\in V(\Z^n)$. We see that $d(x,\lambda^{[x;ijkl]})=4$, and $|x_t-\lambda^{[x;ijkl]}_t|=1$ for all $t\in \{i,j,k,l\}$. Since $x\succ {\bf{0}}$, $i<j<k<l$, and 
	$x_t=0$ for $t>l$, we get that $x_l>0$. Now $x_l \geq 2$ implies that   ${\bf{0}} \prec \lambda^{[x;ijkl]} \prec x$. Since $|\lambda^{[x;ijkl]}_t|<|x_t|$ for all $t\in \{i,j,k,l\}$, and $\lambda^{[x;ijkl]}_t=x_t$ for all $t\not\in \{i,j,k,l\}$ it follows that $\lambda^{[x;ijkl]} \in \D$.

	We show that $\n[x, \D] \subseteq \n[\lambda^{[x;ijkl]}, \D]$. Clearly, $x \in  \n[\lambda^{[x;ijkl]}, \D]$. Let $u \in \n[x, \D]$, $u\neq x$.  Suppose $u \notin \n[\lambda^{[x;ijkl]}, \D]$, {\it i.e.}, $d(u, \lambda^{[x;ijkl]}) > r$. First, assume that $|u_t| \geq |x_t|$ for all $t \in \{i, j, k, l\}$. Then from the fact $|x_i|+|x_j|+|x_k|+|x_l|=r$, we  have $|u_t| = |x_t|$ for all $t\in \{i,j,k,l\}$ and $u_s = 0$ for $s \notin \{i, j, k, l\}$. Here, from Lemma \ref{fourvertex} (iii), $sgn(u_tx_t)=-1$ for at most one $t \in \{i, j, k, l\}$.   Now, $|u_t| = |x_t|$ for all $t \in \{i, j, k, l\}$ and $ u \neq x$ implies that there exists exaclty one $s \in \{i, j, k, l\}$ such that  $sgn(u_sx_s) = -1$.  Then $|u_s - \lambda^{[x;ijkl]}_s| = |u_s - x_s| - 1$ and $|u_t - \lambda^{[x;ijkl]}_t| = 1$ for $t \in \{i, j, k, l\} \setminus \{s\}$. Therefore, $d(u, \lambda^{[x;ijkl]}) = |u_i - \lambda^{[x;ijkl]}_i| + |u_j - \lambda^{[x;ijkl]}_j| + |u_k - \lambda^{[x;ijkl]}_k| + |u_l - \lambda^{[x;ijkl]}_l| = |u_s - x_s| - 1 + 1 + 1 + 1 = 2|x_s| + 2.$
	Since for any $t \in \{i, j, k, l\} \setminus \{s\}$, $|x_s| + |x_t| \leq \cil$ and $|x_t| \geq 2$, we get $|x_s| \leq \cil - 2$. Hence,
	$d(u, \lambda^{[x;ijkl]}) \leq  2(\cil - 2) + 2 = 2\cil - 2 < r,$ which is a contradiction.

	Now, we assume that there exists  $p \in \{i, j, k, l\}$ such that $|u_p| < |x_p|$. For any $t \in \{i, j , k ,l\}\setminus\{p\}$, if $|u_t| < |x_t|$, then by Lemma \ref{fourvertex} (i), $d(u,\lambda^{[x;ijkl]})\leq r$, a contradiction. Hence  $|u_t| \geq |x_t|$ for $t \in \{i, j, k, l\}\setminus\{p\} $. 
	
	From Lemma \ref{fourvertex} (ii), $sgn(u_tx_t)=1$ for $t \in \{i,j, k, l\} \setminus \{p\}$.  Write $|u_t| = |x_t| + a_t$ for some $a_t \geq 0$ and $t \in \{i,j, k, l\} \setminus\{p\}$. Then $\Sigma_{t \in \{i,j, k, l\} \setminus\{p\}} |u_t| = \Sigma_{t \in \{i,j, k, l\} \setminus\{p\}} (|x_t| +a_t)$. Hence $\Sigma_{t \not\in \{i,j, k, l\} \setminus\{p\}} |u_t| \leq r- ( \Sigma_{t \in \{i,j, k, l\} \setminus\{p\}} (|x_t| +a_t))$. Since $|u_p| < |x_p|$, we have $|u_p - \lambda^{[x;ijkl]}_p| = |u_p - x_p| - 1$.  
	Therefore,
	\begin{align*}
		d(u, \lambda^{[x;ijkl]}) &= \sum_{t \in \{i, j, k, l\}}|u_t - \lambda^{[x;ijkl]}_t| 
         + \sum_{t \notin \{i, j, k, l\}} |u_t| \\
		&\leq |u_p - x_p| - 1 + \sum_{t \in \{i,j, k, l\} \setminus\{p\}} (|u_t - x_t| + 1)  + \sum_{t \notin \{i, j, k, l\}} |u_t| \\
		&\leq |u_p| + |x_p| +2+ \sum_{t \in \{i,j, k, l\} \setminus\{p\}} a_t + \sum_{t \notin \{i, j, k, l\}} |u_t| \\
		&\leq  |x_p| + 2+\sum_{t \in \{i,j, k, l\} \setminus\{p\}} a_t + \sum_{t \notin \{i, j, k, l\}\setminus\{p\}} |u_t| \\
		&\leq |x_p|+2+\sum_{t \in \{i,j, k, l\} \setminus\{p\}} a_t + r - ( \Sigma_{t \in \{i,j, k, l\} \setminus\{p\}} (|x_t| +a_t))  \\
		&= r + 2 + |x_p| -  \sum_{t \in \{i,j, k, l\} \setminus\{p\}} |x_t|.
	\end{align*}
	
	Let $\{t_1,t_2,t_3\}=\{i,j,k,l\}\setminus \{p\}$. If $|x_{t_1}| + |x_{t_2}| < \f$, then $|x_p| + |x_{t_3}| \leq \cil$, implies $|x_i| + |x_j| + |x_k| + |x_l| < r$, which is a contradiction. Hence, $|x_{t_1}| + |x_{t_2}| \geq \f$. Also, since $|x_p| \leq \cil - 2$, we get: $
	d(u, \lambda^{[x;ijkl]}) \leq r + 2 + \cil - 2 - |x_{t_3}| - \f$.  Since $|x_{t_3}| \geq 2$, we conclude that $d(u, \lambda^{[x;ijkl]}) \leq r $, again contradicting our assumption that $d(u, \lambda^{[x;ijkl]}) > r$. 
	
	Thus, we conclude that $u\in \n[\lambda^{[x;ijkl]},\D]$. Since $x\in \n[\lambda^{[x;ijkl]}, \D]$, $\n[x,\D]\subseteq \n[\lambda^{[x;ijkl]}, \D]$. From Proposition \ref{flag}, $\D\simeq \D\setminus u$. Let $A=\{y\in \D: |y_i|+|y_j|+|y_k|+|y_l|=r \ \text{for all}\ \{i,j,k,l\}\subseteq [n] \}$. By repeating the above process for each $y\in A$, we find that $\D\simeq \mathrm{Ind}_{\D}(V(\D)\setminus A$. 
	This completes the proof.
\end{proof}

\begin{lemma}\label{sumoffour2}
	Let $r \geq n \geq 5$ and  $r\geq 10$. Then $\Gamma_n^{\alpha, r}$ is homotopy equivalent to the induced subcomplex $\Delta$, where $x \in V(\Delta)$ satisfies the following:
	(i) $|x_i|< \f, |x_j|+|x_k|\leq \cil$ for all $i, j, k \in [n], j \neq k$ and (ii) there exists no $\{i, j, k , l\}\subseteq [n]$ such that $|x_i|+ |x_j|+ |x_k| + |x_l| \geq  r-1$ and $x_s=0$ for all $s\notin\{i,j,k,l\}$.

\end{lemma}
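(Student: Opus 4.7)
The plan is to start from the subcomplex $\Delta'$ of Lemma~\ref{sumoffour}, whose vertices already satisfy the pairwise, triple-sum, and quadruple-sum bounds, and successively excise every vertex $x$ that violates the new condition. Such a vertex must have exactly four nonzero coordinates $x_i, x_j, x_k, x_l$ (with $i<j<k<l$) summing to $r-1$, and $x_s = 0$ for all other $s$: three or fewer nonzero coordinates is already excluded by the triple-sum bound of Lemma~\ref{sumoffour}, and by its quadruple-sum bound the four-sum cannot exceed $r-1$. For each such $x$ I will produce a dominating vertex $\lambda \in V(\Delta')$ with $\n[x,\Delta'] \subseteq \n[\lambda,\Delta']$ and invoke Proposition~\ref{flag}. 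Since every $\lambda$ I construct will have coordinate sum strictly less than $r-1$, it is never itself a removable vertex, so the deletions can be iterated in any order.

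In the generic case $x_l \geq 2$, take $\lambda := \lambda^{[x;ijkl]}$. Then $d(x,\lambda)=4$, $\lambda \succ \mathbf{0}$ because $\lambda_l = x_l - 1 \geq 1$ remains the last positive entry, and the coordinate sum is $r-5$, so $\lambda \in \Delta'$. The containment $\n[x,\Delta']\subseteq\n[\lambda,\Delta']$ is handled exactly as in Lemma~\ref{sumoffour}: Lemma~\ref{fourvertex} yields $d(z,\lambda) \leq r$ whenever $z \in \n[x,\Delta']$ differs from $x$ in at least two coordinates of $\{i,j,k,l\}$, either by a strict decrease in absolute value or by a sign reversal in a coordinate of equal absolute value. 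In the complementary cases $z$ and $x$ agree on at least three of these four coordinates, and a direct computation using the pairwise bound $|z_s|+|z_t| \leq \cil$ together with the slack gained from the total sum being $r-1$ rather than $r$ yields $d(z,\lambda) \leq r$.

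The main obstacle is the degenerate subcase $x_l = 1$: then $\lambda^{[x;ijkl]}_l = 0$, and the anti-lex order condition $\lambda^{[x;ijkl]} \succ \mathbf{0}$ can fail when the largest index $t \in \{i,j,k\}$ with $|x_t|\geq 2$ has $x_t < 0$. My plan is to replace $\lambda^{[x;ijkl]}$ by $\lambda^{[x;ijk]}$ in this subcase, which leaves the entry $x_l = 1$ intact, hence satisfies $\lambda \succ \mathbf{0}$, and has coordinate sum $r-4$. The hypothesis $r \geq 10$ enters precisely here: when $x_l = 1$, the remaining three coordinates satisfy $|x_i|+|x_j|+|x_k| = r-2 \geq 8$ while each pairwise sum is bounded by $\cil$, which forces a reasonably large maximum among $|x_i|,|x_j|,|x_k|$ and leaves enough distance headroom for the three-coordinate bounds of Lemma~\ref{threevertex} to imply $d(z,\lambda^{[x;ijk]}) \leq r$ through the standard case split on the signs and magnitudes of $z_i,z_j,z_k$. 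If even this substitution fails in an extremal configuration, the hypothesis $n \geq 5$ provides a zero index $m \notin \{i,j,k,l\}$, and I can further modify $\lambda$ by assigning a unit to $\lambda_m$ while decrementing one of the larger $|x_t|$; the resulting neighborhood inclusion is verified by computations of the same flavor already present in Lemmas~\ref{xi+xj} and~\ref{lessthanfloor}. Once all such $x$ have been removed, the resulting induced subcomplex is the desired $\Delta$.
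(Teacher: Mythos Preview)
Your outline matches the paper's proof: split into the cases $x_l\ge 2$ and $x_l=1$, and dominate $x$ by $\lambda^{[x;ijkl]}$ in the first and by $\lambda^{[x;ijk]}$ in the second. The identification of the removable vertices, the choice of dominating vertices, and the reliance on Lemmas~\ref{fourvertex} and~\ref{threevertex} are all exactly right.

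Two points where your account diverges from the paper deserve attention. First, your assertion that the generic case $x_l\ge 2$ is ``handled exactly as in Lemma~\ref{sumoffour}'' and that $r\ge 10$ ``enters precisely'' in the $x_l=1$ case is not accurate. Lemma~\ref{sumoffour} treats the four-sum equal to $r$, where one always has $|x_t|\ge 2$ for every $t$; here the four-sum is $r-1$, so in the residual subcases (after Lemma~\ref{fourvertex} has been applied) one is left with configurations like $|x_p|=\tfrac{r-2}{2}$, $r$ even, and one needs $r\ge 10$ to force some other $|x_t|\ge 2$ and reach the pairwise contradiction $|x_p|+|x_t|>\tfrac{r}{2}$. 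The paper uses $r\ge 10$ in both Step~I and Step~II; without it the domination inequality can fail by one unit in the $x_l\ge 2$ case as well.

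Second, your proposed fallback (placing a unit at a spare index $m\notin\{i,j,k,l\}$ using $n\ge 5$) is unnecessary; the paper never needs it, and it is not clear that such a modified $\lambda$ would lie in $V(\Gamma_n^{\alpha,r})$ anyway, since membership requires $\lambda+\delta\in\mathcal{H}_m^{n,\alpha}$, which is guaranteed for $\mathbf{0}\prec\lambda\prec x$ but not for an arbitrary perturbation at a new coordinate. Drop the fallback and instead carry out the explicit case analysis for both steps, invoking $r\ge 10$ wherever the estimate $d(y,\lambda)\le r+2+|x_p|-\sum_{s\neq p}|x_s|$ (or its three-index analogue) leaves a residue of $+1$ or $+2$.
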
 

\begin{proof}
	From Lemma \ref{sumoffour},  $\Gamma_n^{\alpha, r}$ is homotopy equivalent to the induced subcomplex $\Delta_1$ of $\Gamma_n^{\alpha, r}$, where $x \in V(\Delta_1)$ satisfies the three conditions $(i), (ii)$ and $(iii)$, as given in Lemma \ref{sumoffour}. 
	
	Let $A = \{x \in V(\Delta_1) : \exists \ \{i, j, k, l\} \subseteq [n] \ \text{such that} \  |x_i|+ |x_j|+ |x_k| + |x_l| = r-1 \ \text{and }  x_s = 0  \text{ for all }  s \notin\{i,j,k,l\} \}$. It is sufficient to show that $\D_1\simeq \mathrm{Ind}_{\D_1}(V(\D_1)\setminus \{A\})$. 
	
 Let $B=\{z\in V(\D_1): \sum_{s\in\{i,j,k,l\}}|z_s|=r-1, \ \text{and} \ i<j<k<l, z_l\geq2\ \text{for some} \ \{i,j,k,l\}\subseteq[n] 
	\ \text{and} \  z_s=0 \ \text{for}\ s\notin\{i,j,k,l\} \}$. 
	
	Let $C=\{z\in V(\D_1)\setminus B: \sum_{s\in\{i,j,k,l\}}|z_s|=r-1 \ \text{and}\  i<j<k<l, z_l=1\ \text{for some} \ \{i,j,k,l\}\subseteq[n] 
	\ \text{and} \  z_s=0 \ \text{for}\ s\notin\{i,j,k,l\} \}$.
	Observe that $A=B\cup C$.
	
	We shall remove all the elements of $A$ from $\D_1$ without changing the homotopy type of $\D_1$ in two steps: in Step I, we shall remove the elements of $B$, and then in Step II, we shall remove the elements of $C$.
	
    \smallskip
	\noindent 
	\textbf{Step I:}
	Let $x\in B$. Then, $|x_i|+|x_j|+|x_k|+|x_l|=r-1$, where $\{i,j,k,l\}\subseteq [n]$, $x_l\geq 2$, $i<j<k<l$, and $x_s=0$ for all $s\notin \{i,j,k,l\}$. If $|x_t|=0$ for some $t\in \{i,j,k,l\}$, then $\sum_{s\in\{i,j,k,l\}\setminus\{t\}}|x_s|=r-1$, which contradicts Lemma \ref{sumofthree} $(ii)$. Therefore, $|x_t|\geq 1$ for all $t\in \{i,j,k,l\}$.  
	
	Consider the vertex $\lambda^{[x;ijkl]}\in V(\Z^n)$. Clearly,  $d(x,\lambda^{[x;ijkl]})=4$, and $|x_s-\lambda_s|=1$ for all $s\in\{i,j,k,l\}$. Since $i<j<k<l$ and $\lambda^{[x;ijkl]}_l>0$, and $\lambda^{[x;ijkl]}_t=0$ for $t>l$, it follows that $\lambda^{[x;ijkl]}\succ {\bf{0}}$. On the other hand $|\lambda^{[x;ijkl]}_s|<|x_s|$ for all $s\in \{i,j,k,l\}$ implies that $\lambda^{[x;ijkl]}\in \D_1$. 
	
	\begin{claim}\label{claim:fourvertex}
		$\n[x, \D_1] \subseteq \n[\lambda^{[x;ijkl]}, \D_1]$.
	\end{claim}
	\noindent {\em Proof of Claim \ref{claim:fourvertex}:}
	Since $r \geq 5$, $x\in\n[\lambda^{[x;ijkl]}, \D_1]$. Let $y\in\n[x,\D_1]$ with $x\neq y$. Suppose $y\notin \n[\lambda^{[x;ijkl]},\D_1]$.
	Then $d(y,\lambda^{[x;ijkl]})>r$. From Lemma \ref{fourvertex} $(i)$, $|y_s|<|x_s|$ for at most one value of $s$ in $\{i,j,k,l\}$, otherwise $d(y,\lambda^{[x;ijkl]})\leq r$, which is a contadiction. We have the following cases:
	
	\smallskip 
	
	\noindent\textbf{Case 1.1:} There exists $p \in \{i, j, k, l\}$ such that $|y_p| < |x_p|$.

	Then  $|y_s|\geq |x_s|$  for all $s\in \{ i, j,k,l\} \setminus \{p\}$. If  $sgn(y_sx_s)=-1$ for some $s \in \{i, j, k, l\} \setminus \{p\}$, then from Lemma \ref{fourvertex} $(ii)$, we get $d(y,\lambda^{[x;ijkl]})\leq r$,  which is a contradiction. Hence $sgn(y_sx_s)=1$ for all $s \in \{i, j, k, l\} \setminus \{p\}$. Let $|y_s|=|x_s|+a_s$, where $a_s\geq 0$ for $s \in \{i, j, k, l\} \setminus \{p\}$.  Since $\sum_{s}|y_s|\leq r$, we see that $\sum_{s\notin \{i, j,k,l\} \setminus \{p\}}|y_s|\leq r-(\sum_{s \in \{i, j, k, l \} \setminus \{p\}}|x_s|+a_s)$. Therefore,
	\begin{align*}
		d(y,\lambda^{[x;ijkl]})&=|y_p-\lambda^{[x;ijkl]}_p|+ \sum_{s \in \{i, j, k, l\}\setminus\{p\}}|y_s-\lambda^{[x;ijkl]}_s|+\sum_{s\notin\{i,j,k,l\}}|y_s-\lambda^{[x;ijkl]}_s| \\
		&\leq  |y_p-x_p|-1+ \sum_{s \in \{i, j, k, l\}\setminus\{p\}}(|y_s-x_s|+1 )+\sum_{s\notin\{i,j,k,l\}}|y_s| \\
		&\leq|x_p|+2+\sum_{s \in \{i, j, k, l\}\setminus \{p\}} a_s+\sum_{s\notin\{i, j,k,l\} \setminus \{p\}}|y_s| \\
		&\leq |x_p|+2 + \sum_{s \in \{i, j, k, l\}\setminus \{p\}} a_s+r- \sum_{s \in \{i, j, k, l\}\setminus \{p\}} (|x_s|+a_s) \\
		&= r+2+|x_p|-\sum_{s \in \{i, j, k, l\}\setminus \{p\}} |x_s|.
	\end{align*}

	If $|x_p|-\sum_{s \in \{i, j, k, l\}\setminus \{p\}} |x_s|\leq -2$, then $d(y,\lambda^{[x;ijkl]})\leq r$, thereby implying that $y \in \n[\lambda^{[x;ijkl]}, \D_1]$. So, assume that $|x_p|-\sum_{s \in \{i, j, k, l\}\setminus \{p\}} |x_s|\geq -1$. On the other hand if $|x_p|-\sum_{s \in \{i, j, k, l\}\setminus \{p\}} |x_s|\geq 1$, then using the fact that $|x_p|+\sum_{s \in \{i, j, k, l\}\setminus \{p\}} |x_s| =r-1$, we get that $|x_p|\geq \f$, which is not possible. Thus, we have only two possibilities, either $|x_p|=\sum_{s \in \{i, j, k, l\}\setminus \{p\}} |x_s|$ or $|x_p|+1=\sum_{s \in \{i, j, k, l\}\setminus \{p\}} |x_s|$.
	
	Now, if $|x_p|=\sum_{s \in \{i, j, k, l\}\setminus \{p\}} |x_s|$, then using equation $|x_p|+\sum_{s \in \{i, j, k, l\}\setminus \{p\}} |x_s| =r-1$, we get that $|x_p|=\frac{r-1}{2}$. This implies that $r$ must be odd and $|x_p|=\f$, which is not possible. 
	Therefore, we have $|x_p|+1=\sum_{s \in \{i, j, k, l\}\setminus \{p\}}$. In this case, $|x_p|=\frac{r-2}{2}$, which implies that $r$ is even. Since $r\geq 10$, there exists a $t\in \{i, j, k, l\}\setminus \{p\}$ such that $|x_t|\geq 2$. This implies that $|x_p|+|x_t|>\frac{r}{2}$. This is a contradiction.
	
	Thus we conclude that $d(y, \lambda^{[x;ijkl]}) \leq r$, and therefore $y \in \n[\lambda^{[x;ijkl]}, \D_1]$.

    \smallskip
	\noindent 
	\textbf{Case 1.2} $|y_s|\geq |x_s|$ for all $s\in\{i,j,k,l\}$.

	From Lemma \ref{fourvertex} $(iii)$, $sgn(y_sx_s)=-1$ is possible for at most one value of $s$ in $\{i,j,k,l\}$. 
	If $|y_s|\geq |x_s|$, and $sgn(y_sx_s)=1$ for all $s\in \{i,j,k, l\}$, then from $|y_i|+|y_j|+|y_k|+|y_l|\leq r-1$, it follows that $x_s=y_s$ for all $s\in \{i,j,k,l\}$. Consequently, $\sum_{s\notin\{i,j,k,l\}}|y_s|\leq 1$ implies that $d(y,\lambda^{[x;ijkl]})\leq 5<r$, which is a contradiction.  
	
	Thus, there exists a $q\in\{i,j,k,l\}$ such that $sgn(x_qy_q)=-1$. Then $sgn(x_sy_s)=1$ for $s\in\{i,j,k,l\}\setminus\{q\}$. Since $|y_i|+|y_j|+|y_k|+|y_l|\leq r-1$, it follows that $y_q=-x_q$ and $y_s=x_s$ for $s\in\{i,j,k,l\}\setminus\{q\}$. Then, we have $\sum_{s\notin\{i,j,k,l\}}|y_s|\leq 1$. Therefore, 
	\begin{align}
		d(y,\lambda^{[x;ijkl]})&=|y_q-x_q|-1+\sum_{s\in\{i,j,k,l\}\setminus\{q\}}(|y_s-x_s|+1)+\sum_{s\notin\{i,j,k,l\}}|y_s| \notag
		\\
		&\leq |y_q|+|x_q|+3. \label{eq2}
	\end{align}
	
	If $r$ is odd then $|x_q|=|y_q|\leq \f-1$. Thus, from Equation (\ref{eq2}),
	$d(y,\lambda^{[x;ijkl]})\leq \f-1+\f-1+3=r$, which is a contradiction.  Therefore, $r$ must be even. 
	Further, if $|x_q|\leq \frac{r}{2}-2$, then  $d(y,\lambda^{[x;ijkl]})\leq \frac{r}{2}-1+\frac{r}{2}-2+3=r$, which is a contradiction. From the bound  $|x_q|<\frac{r}{2}$, we must have $|x_q|=\frac{r}{2}-1$. Since $r\geq 10$, there exists a $t\in \{i, j, k, l\}\setminus \{q\}$ such that $|x_t|\geq 2$. This implies that $|x_q|+|x_t|>\frac{r}{2}$, which is a contradiction.
	
	Thus we conclude that $d(y, \lambda^{[x;ijkl]}) \leq r$, and therefore $y \in \n[\lambda^{[x;ijkl]}, \D_1]$.  Hence, $\n[x,\D_1]\subseteq \n[\lambda^{[x;ijkl]},\D_1]$. This proves Claim \ref{claim:fourvertex}.

	From Proposition \ref{flag}, $\D_1\simeq \mathrm{Ind}_{\D_1}(V(\D_1)\setminus \{x\})$. Now, applying the same arguments for each $b\in B$, we find that $\D_1\simeq \mathrm{Ind}_{\D_1}(V(\D_1)\setminus B)$. 
	Let $\D_2=\mathrm{Ind}_{\D_1}(V(\D_1)\setminus B)$. Then $\D_1\simeq \D_2$.

    \smallskip
	\noindent \textbf{Step II:}
	Let $x\in C\subseteq \D_2$ be such that $|x_i|+|x_j|+|x_k|+|x_l|=r-1$  where $\{i,j,k,l\}\subseteq [n]$, $i<j<k<l$, $x_l=1$, and $x_s=0$ for all $s\notin \{i,j,k,l\}$. If $|x_t|=0$ for some $t\in \{i,j,k,l\}$, then $\sum_{s\in\{i,j,k,l\}\setminus\{t\}}|x_s|=r-1$, which contradicts Lemma \ref{sumoffour} $(ii)$. Therefore, $|x_t|\geq 1$ for all $t\in \{i,j,k,l\}$.
	
	Consider $\lambda^{[x;ijk]}\in V(\Z^n)$. Clearly, $d(x,\lambda^{[x;ijk]})=3$, and $|x_s-\lambda^{[x;ijk]}_s|=1$ for $s\in\{i,j,k\}$. Since  $x_l=1$, and $x_s=0$ for $s>l$, it follows that $\lambda^{[x;ijk]}_l=1$ and $\lambda^{[x;ijk]}_s=0$ for $s>l$. Thus, $\lambda\succ{\bf{0}}$. Moreover, since $|\lambda^{[x;ijk]}_s|<|x_s|$ for all $s\in\{i,j,k\}$, and $\lambda^{[x;ijk]}_s=p_s$ for $s\notin\{i,j,k\}$, we have $\lambda^{[x;ijk]}\in\D_2$.
    
	\vspace{-0.5 cm}
    
	\begin{claim}\label{claim:fourvertex2}
		$\n[x, \D_2] \subseteq \n[\lambda^{[x;ijk]}, \D_2]$.
	\end{claim}
    
    \vspace{-0.5 cm}
    
	\noindent {\em Proof of Claim \ref{claim:fourvertex2}:}
	Since $r > 3$, $x\in\n[\lambda^{[x;ijk]}, \D_2]$. Let $y\in\n[x,\D_2]$ with $x\neq y$. Suppose $y\notin \n[\lambda^{[x;ijk]},\D_2]$. Then $d(y,\lambda^{[x;ijk]})>r$. From Lemma \ref{threevertex} $(i)$, $|y_s|<|x_s|$ for at most one value of $s$ in  $\{i,j,k\}$, otherwise $d(y,\lambda^{[x;ijk]})\leq r$, which is a contadiction. We have the following cases:

    \smallskip
	\noindent \textbf{Case 2.1:} There exists $p \in \{i, j, k\}$ such that $|y_p| < |x_p|$.

	Then  $|y_s|\geq |x_s|$  for all $s\in \{i ,j,k\}\setminus \{p\}$. If  $sgn(y_sx_s)=-1$ for some $s \in \{i, j, k\} \setminus \{p\}$, then from Lemma \ref{threevertex} (ii), we get $d(y,\lambda^{[x;ijk]})\leq r$,  which is a contradiction. Hence $sgn(y_sx_s)=1$ for all $s \in \{i, j, k\} \setminus \{p\}$. Let $|y_s|=|x_s|+b_s$, where $b_s\geq 0$ for $s \in \{i, j, k\} \setminus \{p\}$.  Since $\sum_{s}|y_s|\leq r$, we see that $\sum_{s\notin \{i, j,k\} \setminus \{p\}}|y_s|\leq r-(\sum_{s \in \{i, j, k \} \setminus \{p\}}|x_s|+b_s)$.
	
	\begin{align*}
    \begin{split}
		d(y,\lambda^{[x;ijk]})&=|y_p-\lambda^{[x;ijk]}_p|+ \sum_{s \in \{i, j, k\}\setminus\{p\}}|y_s-\lambda^{[x;ijk]}_s|+\sum_{s\notin\{i,j,k\}}|y_s-\lambda^{[x;ijk]}_s| \\
		&\leq  |y_p-x_p|-1+ \sum_{s \in \{i, j, k\}\setminus\{p\}}(|y_s-x_s|+1 )+\sum_{s\notin\{i,j,k\}}|y_s| +1 \\
		&\leq  |x_p|+2+ \sum_{s \in \{i, j, k\}\setminus \{p\}} b_s+\sum_{s\notin\{i,j,k\}}|y_s| +|y_p| \\
		&\leq|x_p|+2+\sum_{s \in \{i, j, k\}\setminus \{p\}} b_s+\sum_{s\notin\{i, j,k\} \setminus \{p\}}|y_s| \\
		&\leq |x_p|+2 + \sum_{s \in \{i, j, k\}\setminus \{p\}} b_s+r- \sum_{s \in \{i, j, k\}\setminus \{p\}} (|x_s|+b_s) \\
		&= r+2+|x_p|-\sum_{s \in \{i, j, k\}\setminus \{p\}} |x_s|.
         	\end{split}
	\end{align*}
	
	If $|x_p|-\sum_{s \in \{i, j, k\}\setminus \{p\}} |x_s|\leq -2$, then $d(y,\lambda^{[x;ijk]})\leq r$, which is a contradiction. If $|x_p|-\sum_{s \in \{i, j, k\}\setminus \{p\}} |x_s|\geq 1$, then using the fact that $|x_p|+\sum_{s \in \{i, j, k\}\setminus \{p\}} |x_s|=r-2$, we find that $|x_p|\geq \f$, which is not possible. Thus, we have only two possibilities: either $|x_p|=\sum_{s \in \{i, j, k\}\setminus \{p\}} |x_s|$ or $|x_p|+1=\sum_{s \in \{i, j, k\}\setminus \{p\}} |x_s|$. If $|x_p|=\sum_{s \in \{i, j, k\}\setminus \{p\}} |x_s|$, then from the equation, $|x_p|+\sum_{s \in \{i, j, k\}\setminus \{p\}} |x_s|=r-2$, we get $|x_p|=\frac{r-2}{2}$. This implies that $r$ is even. Since $r\geq 10$, there exists $t\in \{i,j,k\}\setminus \{p\}$ such that $|x_t|\geq 2$. Then $|x_p|+|x_t|>\frac{r}{2}$, which is a contradiction. 
	
	Now, we assume that $|x_p|+1=\sum_{s \in \{i, j, k\}\setminus \{p\}} |x_s|$. Then $|x_p|=\frac{r-3}{2}$. This implies that $r$ is odd and $|x_p|=\f-1$. Since $|x_l|=1$, we get $\sum_{s \in \{i, j, k\}\setminus \{p\}} |x_s|=\f$. If $|x_t|\geq 3$ for some $t\in \{i, j, k\}\setminus \{p\}$, then $|x_p|+|x_t|>\cil$, which is a contradiction. Therefore,  $\sum_{s \in \{i, j, k\}\setminus \{p\}} |x_s|\leq 4$. This implies that $\f\leq 4$ and thus, $r\leq 9$, which is a contradiction. Thus we conclude that $d(y, \lambda^{[x;ijk]}) \leq r$, and therefore $y \in \n[\lambda^{[x;ijk]}, \D_2]$.

    \smallskip
	\noindent \textbf{Case 2.2:} $|y_s|\geq |x_s|$ for all $s\in\{i,j,k\}$.

	From Lemma \ref{threevertex} $(iii)$, $sgn(y_sx_s)=-1$ is possible for at most one value of $s$ in $\{i,j,k\}$. 
	If $|y_s|\geq |x_s|$, and $sgn(y_sx_s)=1$ for all $s\in \{i,j,k\}$, then since from Lemma \ref{sumoffour} $(iii)$,   $|y_i|+|y_j|+|y_k|\leq r-2$, it follows that $x_s=y_s$ for all $s\in \{i,j,k\}$. Consequently, $\sum_{s\notin\{i,j,k\}}|y_s|\leq 2$ implying that $d(y,\lambda^{[x;ijk]})\leq 6$, which contradicts our assumption that $y\notin \n[\lambda^{[x;ijk]},\D_2]$.  
	
	Thus, there exists a $q\in\{i,j,k\}$ such that $sgn(x_qy_q)=-1$. Then $sgn(x_sy_s)=1$ for $s\in\{i,j,k\}\setminus\{q\}$. Since $|y_i|+|y_j|+|y_k|\leq r-2$, it follows that $y_q=-x_q$, $y_s=x_s$, for $s\in\{i,j,k\}\setminus\{q\}$. Then, we have $\sum_{s\notin\{i,j,k\}}|y_s|\leq 2$. Therefore, 
	\begin{align}
		d(y,\lambda^{[x;ijk]})&=|y_q-x_q|-1+\sum_{s\in\{i,j,k\}\setminus\{q\}}(|y_s-x_s|+1)+\sum_{s\notin\{i,j,k\}}|y_s-x_s| \notag
		\\
		&\leq |y_q|+|x_q|+1+\sum_{s\notin\{i,j,k\}}|y_s|+\sum_{s\notin\{i,j,k\}}|x_s| \notag
		\\
		&\leq |y_q|+|x_q|+4 = 2|x_q|+4. \label{eq3}
	\end{align}
	
	If $|x_q|  \leq \f-2 $, then  $d(y,\lambda^{[x;ijk]}) \leq r$, a contradiction to our assumption. So   $|x_q|=\f-1$ (because $|x_q|< \f$). 
	
	Suppose $r$ is odd. Since $|x_l|=1$,  $\sum_{s \in \{i, j, k\}\setminus \{q\}} |x_s|=\f$. If $|x_t|\geq 3$ for any $t\in \{i, j, k\}\setminus \{q\}$, then $|x_q|+|x_t|>\cil$, which is a contradiction. Therefore,  $\sum_{s \in \{i, j, k\}\setminus \{q\}} |x_s|\leq 4$. This implies that $\f\leq 4$ and thus, $r\leq 9$, a contradiction. 
	
	Suppose $r$ is even. Since $r\geq 10$, there exists a $t\in\{i,j,k\}\setminus \{q\}$ such that $|x_t|\geq 2$. This implies that $|x_q|+|x_t|>\frac{r}{2}$. This is a contradiction.
		
	Thus we conclude that $d(y, \lambda^{[x;ijk]}) \leq r$, and therefore $y \in \n[\lambda^{[x;ijk]}, \D_2]$. 
	Hence, $\n[x,\D_1]\subseteq \n[\lambda^{[x;ijk]},\D_1]$. This proves Claim \ref{claim:fourvertex2}.

	From Proposition \ref{flag}, $\D_2\simeq \D_2 \setminus x$. Now, applying the same arguments for each $c\in C$, we find that $\D_2\simeq \mathrm{Ind}_{\D_2}(V(\D_2)\setminus C)$. Moreover, since $A=B\cup C$ and $\D\simeq \D_1\simeq \mathrm{Ind}_{\D_1}(V(\D_1)\setminus B)= \D_2\simeq \mathrm{Ind}_{\D_2}(V(\D_2)\setminus C)$, it follows that $\D\simeq \mathrm{Ind}_{\D}(V(\D)\setminus A)$. This completes the proof.
\end{proof}

\begin{theorem}\label{maintheorem}
	For $2\leq n\leq 5$ and $r \geq n$,  $\vr{\mathbb{Z}^n}{r}$ is contractible. 
\end{theorem}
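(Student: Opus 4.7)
The plan is to reduce to showing, for each positive integer $m$, that the finite subcomplex $\Delta_m^{n,r} = \vr{\G_m^n}{r}$ is contractible; by the Whitehead-theorem argument given at the start of Section~\ref{sec:vr1}, this implies contractibility of $\vr{\Z^n}{r}$. To handle $\Delta_m^{n,r}$, I would list the vertices of $\G_m^n$ in the anti-lexicographic order $\prec$ and delete them one at a time, applying Proposition~\ref{Link and Deletion} at each stage. If the link of the vertex being removed is contractible in the current complex, its inclusion into the deletion is null-homotopic and the suspension summand vanishes, so the homotopy type is preserved; iterating leaves a single vertex and proves contractibility.

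After translating the vertex being removed at step $\alpha$ to the origin, its link is exactly $\Gamma_n^{\alpha,r}$, so the whole theorem reduces to proving that $\Gamma_n^{\alpha,r}$ is contractible for every valid $\alpha$, every $n \in \{2,3,4,5\}$, and every $r \geq n$. The plan is to apply Lemmas~\ref{r/2}, \ref{xi+xj}, \ref{lessthanfloor}, \ref{sumofthree}, and \ref{sumoffour} in sequence (each one invoked whenever its hypotheses on $n$ and $r$ are satisfied) to replace $\Gamma_n^{\alpha,r}$ by a homotopy-equivalent induced subcomplex $\Delta'$ whose vertices satisfy the strong coordinate bounds $|x_i| \leq \lfloor r/2 \rfloor$, $|x_j|+|x_k| \leq \lceil r/2 \rceil$, and, when $n \geq 4$, $|x_i|+|x_j|+|x_k| < r-1$ together with $|x_i|+|x_j|+|x_k|+|x_l| \leq r-1$.

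The main step is then to exhibit $e_n := (0, \dots, 0, 1)$ as a cone point of $\Delta'$. In the generic case $\delta_n < m$, the vertex $e_n$ survives into $V(\Delta')$ because it satisfies every coordinate bound listed above and $e_n + \delta \succ \delta$ lies in $\mathcal{H}_m^{n,\alpha}$. For any other $x \in V(\Delta')$, the inclusion $V(\Gamma_n^{\alpha,r}) \subseteq \Z^{n-1} \times \Z_{\geq 0}$ forces $x_n \geq 0$, and I would split into two cases: if $x_n \geq 1$, then $d(x,e_n) = d({\bf{0}},x) - 1 \leq r-1$; if $x_n = 0$, then $d(x,e_n) = \sum_{i=1}^{n-1}|x_i| + 1$, and the required bound $\sum_{i=1}^{n-1}|x_i| \leq r-1$ is supplied by Lemma~\ref{r/2} for $n = 2$, Lemma~\ref{xi+xj} for $n=3$, Lemma~\ref{sumofthree} for $n=4$, and Lemma~\ref{sumoffour} applied to $\{1,2,3,4\} \subseteq [5]$ for $n=5$. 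Since $\Delta'$ is a flag complex and $e_n$ is adjacent to every vertex, $\Delta'$ is a cone with apex $e_n$, hence contractible.

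The hard part is the boundary case $\delta_n = m$, where $e_n + \delta \notin \G_m^n$ and so $e_n$ is not a vertex of $\Gamma_n^{\alpha,r}$. Here $V(Y_m^{n,\alpha}) \subseteq \Z^{n-1} \times \Z_{\leq 0}$, which forces $y_n = 0$ for every $y \in V(\Gamma_n^{\alpha,r})$, since $y_n < 0$ would contradict $y \succ {\bf{0}}$. Thus $\Delta'$ lies in the hyperplane $\{y_n = 0\}$ and the same cone-point argument applies with $e_{n-1}$ in place of $e_n$, the required bound on the first $n-2$ coordinates again coming from the appropriate earlier lemma. One descends recursively, trying $e_{n-1}, e_{n-2}, \dots$ until either some coordinate $\delta_j < m$ produces a valid cone point or $\delta = (m, \dots, m)$, in which case the remaining complex has only a single vertex and is trivially contractible.
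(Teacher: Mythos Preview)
Your proposal is correct and follows essentially the same route as the paper: reduce to contractibility of $\Gamma_n^{\alpha,r}$, invoke the chain of Lemmas~\ref{r/2}--\ref{sumoffour} to prune to a subcomplex $\Delta'$, and then cone off with $e_n$. Your treatment of the boundary case via the condition $\delta_j = m$ is equivalent to the paper's phrasing (``no vertex of $\Delta'$ has positive $j$-th coordinate''), since $\delta_j = m$ forces every $y \in V(Y_m^{n,\alpha})$ to have $y_j \leq 0$, and combining this with $y \succ \mathbf{0}$ and $y_{j+1} = \cdots = y_n = 0$ gives $y_j = 0$; conversely $\delta_j < m$ guarantees $e_j + \delta \in \mathcal{H}_m^{n,\alpha}$, so $e_j \in V(\Delta')$.
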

\begin{proof}
	%
	%
	%
	%
	Recalling the discussion at the beginning of this section, to prove that  $\vr{\mathbb{Z}^n}{r}$ is contractible, it is sufficient to show that $\Gamma_{n}^{\alpha,r}$ is contractible for all $ 1 \leq \alpha \leq \mathrm{Card}(V(\G_{m}^n))-1$. We provide a proof for each particular value of $n$ and considering $\alpha$ arbitrary.
	
	\smallskip 
	
	\noindent\textbf{Case (i):} $n=2.$
	
	From Lemma \ref{r/2}, $\Gamma_{2}^{\alpha,r}$ is homotopy equivalent to the induced subcomplex, say $\D$, of $\Gamma_{2}^{\alpha,r}$ on the vertex set $\{x\in V(\Gamma_{2}^{\alpha,r}): |x_1|, |x_2| \leq \f \}$. First,  suppose $\D$ contains the vertex $e = (0,1)$. Let $y \in V(\Delta)$. Since $y\succ (0, 0)$, $y_2\geq 0$. Moreover, if $y_2>0$, then
	$d(y,e)=|y_1|+|y_2-1|\leq |y_1|+|y_2|-1\leq \f+\f-1\leq r-1$, and if $y_2=0$, then $d(y,e)\leq |y_1|+1\leq \f+1\leq r$. Therefore, $\D$ is a cone with apex $e$, and hence contractible. Therefore, $\Gamma_{2}^{\alpha,r}$ is contractible.
	
	
	Now, consider the case when $(0,1)\notin\D$.  Then for every vertex $v\in \D$, $v_2=0$.  If $V(\Delta)= \{(0, 0)\}$, then clearly $\Delta$ is contractible. So assume that $\mathrm{Card}(V(\Delta)) \geq 2$. Clearly $(1, 0) \in \D$.  Since $d(y,(1, 0))\leq r$ for all $y\in V(\D)$. Therefore, $\D$ is a cone with apex at $(1, 0)$, and hence contractible. This completes the proof for $n = 2$.
	

	\noindent\textbf{Case (ii):} $n = 3$.  
	
	From Lemma \ref{xi+xj}, $\Gamma_3^{\alpha,r}$ is homotopy equivalent to the induced subcomplex, say, $X$ on the vertex set $\{x \in \Gamma_3^{\alpha, r} : |x_i| \leq \f \ \text{and} \ |x_j| + |x_k| \leq \cil \ \ \text{for all } \ i,j,k\in [3], j\neq k\}$.
	
	If for every $y \in V(X)$, $y_3 = 0$, then $X$ is is an induced subcomplex on the vertex set $\{x \in \Gamma_3^{\alpha, r} : |x_1|, |x_2| \leq \f, |x_1| + |x_2| \leq \cil \ \text{and} \ x_3 = 0\}$. Now, by proceeding in the similar way as in Case $(i)$ above, we conclude that $X$ is contractible.


        So, we assume that there exists an element in $X$ whose third coordinate is non-zero. Then clearly $e':= (0, 0, 1)\in X$. Let $x \in V(X)$. If $x_3 \geq 1$, then 
	$d(x,e') = |x_1| + |x_2| + |x_3 - 1| = |x_1| + |x_2| + |x_3| - 1 \leq r - 1.$
	If $x_3 = 0$, then $|x_1| + |x_2| \leq \cil \leq r - 1$, and hence $d(x, e') = |x_1| + |x_2| + 1 \leq r.$ Thus, $X$ is a cone with apex $e'$, hence contractible. Therefore, $\Gamma_3^{\alpha,r}$ is contractible. This completes the proof for $n = 3$. 
	
	\smallskip
	
	\noindent\textbf{Case (iii):}  $n = 4$.

	From Lemma \ref{sumofthree},  $\Gamma_4^{\alpha,r}$ is homotopy equivalent to the induced subcomplex, say, $\mathcal{K}$ of $\Gamma_4^{\alpha,r}$,  where every vertex  $x \in V(\mathcal{K})$ satisfies the following: (i) $|x_i|< \f, |x_j|+|x_k|\leq \cil$ for all $i, j, k \in [4], j \neq k$  and (ii) $|x_i| + |x_j| + |x_k| < r-1$ for all  $\{i, j, k \}\subseteq [4]$.
	
	If for every $y \in V(\mathcal{K})$, $y_4 = 0$, then by prooceding in the similar way as in Case $(ii)$ above, we conclude that $\mathcal{K}$ is contractible.
    
    Suppose that an element whose fourth coordinate is non-zero exists in $\mathcal{K}$. Then we consider the element $\gamma = (0, 0, 0, 1)$. Clearly, $\gamma\succ (0,0,0,0)$, and $\gamma\in V(\mathcal{K})$. 
	Let $z \in V(\mathcal{K})$. If $z_4 \geq 1$, then $d(z, \gamma) = |z_1| + |z_2| + |z_3| + |z_4| - 1 \leq r - 1.$ If $z_4 = 0$, then $|z_1| + |z_2| + |z_3| \leq r - 2$, and hence $d(z, \gamma) = |z_1| + |z_2| + |z_3| + 1 \leq r - 1.$ Thus, $\mathcal{K}$ is a cone with apex $\gamma$, and hence contractible. Therefore, $\Gamma_4^{\alpha,r}$ is contractible. 
	
	\smallskip
	
	\noindent\textbf{Case (iv):}  $n = 5$.
	
	From Lemma \ref{sumoffour}, $\Gamma_5^{\alpha,r}$ is homotopy equivalent to a subcomplex, say, $\mathcal{L}$ of $\Gamma_5^{\alpha,r}$, where every vertex $x\in V(\mathcal{L})$ satisfies (i) $|x_i|< \f, |x_j|+|x_k|\leq \cil$ for all $i, j, k \in [5], j \neq k$ (ii) $|x_i| + |x_j| + |x_k| < r-1$ for all $\{i, j, k\}\subseteq [5]$, and (iii) $|x_i| + |x_j| + |x_k| + |x_l|\leq r-1$ for all  $\{i, j, k, l\} \subseteq [5]$. 
	
	If for every $w \in V(\mathcal{L})$, $w_5 = 0$, then by prooceding in the similar way as in Case $(iii)$ above, we conclude that $\mathcal{L}$ is contractible.  
       
    Suppose that an element whose fifth coordinate is non-zero exists in $\mathcal{L}$. Then we consider the element $\beta = (0, 0, 0, 0,1)$. Clearly, $\beta\succ (0,0,0,0,0)$, and $\beta\in V(\mathcal{L})$. 
	Let $v \in V(\mathcal{L})$. If $v_5 \geq 1$, then $d(v, \beta) = |v_1| + |v_2| + |v_3| + |v_4| +|v_5| - 1 \leq r - 1.$ If $v_5 = 0$, then $|v_1| + |v_2| + |v_3| +|v_4| \leq r - 1$, and hence $d(v, \beta) = |v_1| + |v_2| + |v_3| + |v_4|+ 1 \leq r.$ Thus, $\mathcal{L}$ is a cone with apex $\beta$, and hence contractible. Therefore, $\Gamma_5^{\alpha,r}$ is contractible. 
\end{proof}

\begin{theorem}\label{thm:n=6(Inside)}
	$\vr{\mathbb{Z}^6}{r}$ is contractible for $r\geq 10$.
\end{theorem}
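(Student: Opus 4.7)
The plan is to imitate the scheme of Theorem~\ref{maintheorem}, with Lemma~\ref{sumoffour2} playing the role that Lemma~\ref{sumoffour} played for $n = 5$. The hypothesis $r \ge 10$ appears precisely because Lemma~\ref{sumoffour2} requires $n \ge 5$ and $r \ge 10$; both are met for $n = 6$ and $r \ge 10$.

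By the Whitehead-theorem and filtration argument opening Section~\ref{sec:vr1}, it suffices to prove that the link $\Gamma_6^{\alpha, r}$ is contractible for every $1 \le \alpha \le \mathrm{Card}(V(\G_m^6)) - 1$. Invoking Lemma~\ref{sumoffour2}, I would reduce $\Gamma_6^{\alpha, r}$, up to homotopy equivalence, to the induced subcomplex $\Delta$ whose vertices $x$ satisfy $|x_i| < \f$ and $|x_j| + |x_k| \le \cil$ for all appropriate indices, together with the condition that no 4-subset $\{i, j, k, l\} \subseteq [6]$ on which $x$ is supported has $\sum_{s \in \{i,j,k,l\}} |x_s| \ge r - 1$.

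I would then branch into cases based on the coordinates of the translation vertex $\delta$ (the minimum of $\mathcal{H}_m^{6, \alpha}$). If $\delta_6 = m$, then every vertex of $\Delta$ satisfies $v_6 = 0$, and the problem reduces to the $n = 5$ setting treated in Cases~(i)--(iv) of Theorem~\ref{maintheorem}, using Lemma~\ref{sumoffour2} in place of Lemma~\ref{sumoffour}. Otherwise $\delta_6 \le m - 1$, and I would aim to realize $\Delta$ as a cone. When $\delta_5 \le m - 1$, the apex is $\beta = (0, 0, 0, 0, 1, 1) \in V(\Delta)$; when $\delta_5 = m$, every vertex of $\Delta$ satisfies $v_5 \le 0$, and the apex to use is instead $\beta' = (0, 0, 0, 0, -1, 1) \in V(\Delta)$.

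To verify the cone property, I would split by the signs of $v_5$ and $v_6$, recalling $v_6 \ge 0$ because $v \succ \mathbf{0}$. In every case except $v_5 = v_6 = 0$, the computation of $d(v, \beta)$ collapses either to $d(\mathbf{0}, v) \le r$ or to $d(\mathbf{0}, v) - 2$, both of which are acceptable. In the single remaining case $v_5 = v_6 = 0$, the vertex $v$ is supported on a subset of $\{1, 2, 3, 4\}$, and condition~(ii) of Lemma~\ref{sumoffour2} supplies the inequality $|v_1| + |v_2| + |v_3| + |v_4| \le r - 2$, which yields $d(v, \beta) \le r$; an entirely analogous case split handles $\beta'$. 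The main conceptual point — and the anticipated main obstacle — is that the one-coordinate apex $(0, \dots, 0, 1)$ used for $n \le 5$ does \emph{not} suffice when $n = 6$: it fails on high-support vertices such as $(2, 2, 2, 2, 2, 0)$, which lie in $\Delta$ when $r = 10$ and have distance $r + 1$ to $(0, \ldots, 0, 1)$. Replacing the single apex by the two-coordinate apex $(0, 0, 0, 0, \pm 1, 1)$ restores the cone bound precisely because the second nonzero coordinate matches the structure of the $v_5 = v_6 = 0$ case through Lemma~\ref{sumoffour2}(ii).
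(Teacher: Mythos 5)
Your proposal is correct and follows essentially the same route as the paper: reduce to contractibility of $\Gamma_6^{\alpha,r}$, apply Lemma \ref{sumoffour2}, and cone off the resulting subcomplex with apex $(0,0,0,0,1,1)$, using condition (ii) of that lemma exactly where the paper does (the $v_5=v_6=0$ case); your observation that the one-coordinate apex fails on vertices like $(2,2,2,2,2,0)$ is precisely the reason the paper switches to the two-coordinate apex. The only cosmetic deviation is in the subcase with no positively-fifth-coordinate vertex, where you use the apex $(0,0,0,0,-1,1)$ while the paper uses $(0,0,0,0,0,1)$; both choices satisfy the cone bound.
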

\begin{proof}
	It is sufficient to show that $\Gamma_{n}^{\alpha,r}$ is contractible for all $ 1 \leq \alpha \leq \mathrm{Card}(V(\G_{m}^n))-1$. From Lemma \ref{sumoffour2}, $\Gamma_6^{\alpha,r}$ is homotopy equivalent to an induced subcomplex $\D$ such that there exists no vertex  $u\in V(\D)$ with $|u_i|+|u_j|+|u_k|+|u_l|\geq r-1$ and $u_s=0$ for some  $\{i,j,k,l\}\subset [6]$ and $s\notin \{i,j,k,l\}$. 
	
	If for every $y \in V(\D)$, $y_6 = 0$, 
    then by prooceding in the similar way as in Case $(iv)$ of Theorem \ref{maintheorem} above, we conclude that $\mathcal{D}$ is contractible.
   
	Suppose there exists an element in $V(\D)$ with a non-zero sixth coordinate. Let $p = (0, 0, 0, 0, 0, 1)$. Clearly $p\succ (0,0,0,0,0,0)$ and $p\in  V(\D)$.
	
	First, suppose there exists an element in $V(\D)$ with a positive fifth coordinate. Then $q = (0, 0, 0, 0, 1, 0) \in  V(\D)$. Let $w = (0, 0, 0, 0, 1, 1)$. We see that $w\succ (0,0,0,0,0,0)$ and from Lemma \ref{sumoffour2}, $w\in V(\D)$.
	Let $x \in V(\D)$.  
	If $x_6 \geq 1$, then $d(x, w) = |x_1| + |x_2| + |x_3| + |x_4| + |x_5 - 1| + |x_6 - 1| \leq |x_1| + \cdots + |x_5| +1+ |x_6|-1 \leq r.$ If $x_6 = 0$ and $x_5 \neq 0$, then since $x\succ (0,0,0,0,0,0)$, $x_5 \geq 1$, so $d(x, w) = |x_1| + \cdots + |x_4| + |x_5 - 1| + 1 = |x_1| + \cdots + |x_4| + |x_5| \leq r.$ If $x_5 = x_6 = 0$, then since $|x_1| + \cdots + |x_4| \leq r - 2$ for $r\geq 10$, hence
	$d(x, w) = |x_1| + \cdots + |x_4| + 1 +1 \leq r$ for $r\geq 10$.  Thus, $\Delta$ is a cone with apex vertex $w$, and hence it is contractible. Therefore,  $\Gamma_6^{\alpha,r}$ is contractible.

	If there is no element in $V(\D)$ with a positive fifth coordinate, then for any $y\in V(\D)$, either $y_6\geq 1$ or $y_5=y_6=0$. Since we have $|y_1|+|y_2|+|y_3|+|y_4|\leq r-2$, we conclude that $d(y,p)\leq r$. Therefore, $N[x, \D] \subseteq N(p, \D)$.
	Thus, using Proposition \ref{flag}, we remove all the vertices in $\D$ except $p$. Hence, $\D$ is contractible, and thus $\Gamma_6^{\alpha,r}$ is contractible in $\vr{\Z^6}{r}$.
\end{proof}

\section{The complex  $\vr{\Z^n}{2}$} \label{sec:vr2}
In this section, we prove Theorems \ref{thm:r=2_Intro} and \ref{thm:simplyconnected_intro}. We first characterize the maximal simplices of the complex $\vr{\Z^n}{2}$, and then use discrete Morse theory to determine the homotopy type of these complexes.  We begin by defining a few notations that we will use throughout this section. 

  Recall that for a positive integer $n$, $[n] = \{1, 2, \ldots, n\}$.  Let  $[-n] = \{-1, \ldots, -n\}$ and $[n]^{\pm} = [n] \cup [-n]$.  For $ \{i_1, i_2, \ldots, i_k\}\subseteq [n]^{\pm}$ such that $|i_s| \neq |i_t|$ for all $1 \leq s \neq t \leq k$, we define 
$x^{i_1, \ldots, i_k} \in V(\Z^n)$ by 

$$
x^{i_1, \ldots, i_k}(j)  = \begin{cases}
	\ x(j) & \text{if} \  j \notin \{i_1, \ldots, i_k\},\\
	\ x(j)+1  & \text{if} \   j \in \{i_1, \ldots, i_k\},  \\
	\ x(j)-1  & \text{if} \   -j \in \{i_1, \ldots, i_k\}.\\
\end{cases}
$$

For $i \in [n]$ and $k \in \Z$, we define $x^{[i;k^+]} \in V(\Z^n)$ by 
$$
x^{[i;k^+]}(j) = \begin{cases} 
	x(j) \ \ \ \  \ \ \  \ \ \text{if} \ j \neq i \\
	x(i)+k  \ \ \ \  \text{if} \ j =i. 
\end{cases}
$$



Recall that $\mathbb{Z}^n$ is a graph, where any two elements $x$ and $y$ are connected by an edge if and only if $d(x, y) = \sum_{i = 1}^n|x_i-y_i| = 1$. Define the \textit{open neighborhood} of a vertex $x$ in $\mathbb{Z}^n$ by 
$N(x, \mathbb{Z}^n) = \{y\in \mathbb{Z}^n : d(x,y)=1\}$, 
and the \textit{closed neighborhood} of $x$ by 
$N[x, \mathbb{Z}^n] = N(x, \mathbb{Z}^n) \cup \{x\}$.
For the simplicity of notation, we write $N(x)$ and $N[x]$ for the sets $N(x, \mathbb{Z}^n)$ and $N[x,\mathbb{Z}^n]$, respectively.

We first characterize the maximal simplices of $\vr{\Z^n}{2}$.  The idea of the proof of the following lemma is similar to \cite[Lemma 3.1]{Shukla2023}. 

\begin{lemma} \label{thm:maximal2}
	Let  $n \geq 3$  and $\tau $ be a maximal simplex of $\vr{\Z^n}{2}$. Then one of the following is true:
	\begin{itemize}
		\item[(i)] $\tau = N[x]$ for some $x \in V(\Z^n)$.
		\item[(ii)] $\tau = \{x, x^{i_0}, x^{j_0}, x^{i_0, j_0}\}$ for some $x \in V(\Z^n)$ and $i_0, j_0 \in [n]^{\pm}$.
		\item[(iii)] $\tau = \{x,x^{i_0,j_0},x^{j_0,k_0},x^{i_0,k_0}\}$ for some $x \in V(\Z^n)$ and $i_0, j_0, k_0 \in [n]^{\pm}$.
		%
		
	\end{itemize}
\end{lemma}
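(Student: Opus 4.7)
The plan is to fix a maximal simplex $\tau$ and a base vertex $x \in \tau$, then classify the other vertices of $\tau$ by their shape relative to $x$. Since every $y \in \tau$ satisfies $d(x, y) \leq 2$, the only possibilities for $y \neq x$ are $y = x^i$ for some $i \in [n]^{\pm}$ (distance $1$), an \emph{axial} distance-$2$ vertex of the form $x^{[i;\pm 2^+]}$ with $i \in [n]$, or a \emph{diagonal} distance-$2$ vertex $x^{i,j}$ with $i, j \in [n]^{\pm}$ and $|i| \neq |j|$. I would split the argument according to whether an axial distance-$2$ vertex appears in $\tau$.

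The first step handles the case where some axial vertex $y_0 = x^{[i_0; 2^+]}$ lies in $\tau$ (the $-2$ case is entirely symmetric, producing $N[x^{-i_0}]$). I claim $\tau \subseteq N[x^{i_0}]$. For any $z \in \tau$, set $v = z - x$; then $\|v\|_1 \leq 2$ and $\|v - 2 e_{i_0}\|_1 \leq 2$. Summing these and using the elementary bound $|v_{i_0}| + |v_{i_0} - 2| \geq 2$ (with equality iff $v_{i_0} \in \{0,1,2\}$) yields $|v_{i_0} - 1| + \sum_{j \neq i_0} |v_j| \leq 1$, i.e., $d(z, x^{i_0}) \leq 1$. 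Since $N[x^{i_0}]$ is itself a simplex of $\vr{\Z^n}{2}$, maximality of $\tau$ forces $\tau = N[x^{i_0}]$, yielding type (i). Henceforth I may assume no axial distance-$2$ vertex appears in $\tau$.

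In this residual case every distance-$2$ vertex from $x$ is diagonal. If $\tau$ contains no distance-$2$ vertex at all, then $\tau \subseteq N[x]$ and maximality again gives type (i); otherwise fix $y_0 = x^{i_0, j_0} \in \tau$. Define $A = \{k : x^k \in \tau\}$ and $B = \{\{k, l\} : x^{k, l} \in \tau\}$. Elementary distance calculations yield two key facts: (a) $d(x^k, y_0) \leq 2$ forces $k \in \{i_0, j_0\}$, so $A \subseteq \{i_0, j_0\}$; and (b) for distinct pairs $\{k, l\}, \{p, q\}$, $d(x^{k, l}, x^{p, q}) \leq 2$ iff $\{k, l\} \cap \{p, q\} \neq \emptyset$ as signed sets (disjoint absolute-value supports and antipodal pairs both give distance $4$). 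Hence $B$ is an intersecting family of $2$-subsets of $[n]^{\pm}$ containing $\{i_0, j_0\}$.

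The final step invokes the classical dichotomy for intersecting families of $2$-sets: $B$ is either a \emph{star} (all pairs share a common element) or a \emph{triangle} $\{\{a, b\}, \{b, c\}, \{a, c\}\}$. In the triangle case, necessarily $B = \{\{i_0, j_0\}, \{j_0, k_0\}, \{i_0, k_0\}\}$ for some $k_0$ with $|k_0| \notin \{|i_0|, |j_0|\}$; the distances $d(x^{i_0}, x^{j_0, k_0}) = 3$ and the symmetric analogue force $A = \emptyset$, yielding type (iii). The main obstacle is the star case, where $|B| \geq 2$ must be excluded. If $|B| \geq 2$ with common center $i_0 \in [n]^{\pm}$, the axial distance-$2$ vertex in the direction of $i_0$ has distance at most $2$ to every element of $\tau$—distance $2$ to each $x^{i_0, w}$, distance $2$ to $x$, and distance $1$ to $x^{i_0}$—so maximality of $\tau$ would force this axial vertex into $\tau$, contradicting the standing assumption. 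Hence $|B| = 1$, and maximality then forces $A = \{i_0, j_0\}$, delivering type (ii).
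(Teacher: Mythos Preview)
Your proof is correct and takes a genuinely different route from the paper's. The paper organizes its case analysis around whether some vertex $y \in \tau$ has a distance-$1$ neighbor inside $\tau$: if so, it argues directly that $\tau$ is either $N[x]$ or the $4$-element square of type~(ii); if not (so all pairs in $\tau$ are at distance exactly $2$), it rules out axial vertices and extracts the tetrahedral type~(iii) by hand. Your argument instead first disposes of the axial case uniformly via the inequality $\|v\|_1 + \|v - 2e_{i_0}\|_1 \leq 4 \Rightarrow d(z, x^{i_0}) \leq 1$, and then reduces the remaining analysis to a purely combinatorial statement: the set $B$ of diagonal directions forms an intersecting family of $2$-subsets of $[n]^{\pm}$, hence is either a star or a triangle. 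The triangle gives type~(iii) immediately, and a star with $|B| \geq 2$ is killed because the axial vertex along the star center could be appended, contradicting maximality. This is cleaner and more conceptual than the paper's argument; the paper's ad~hoc case-checking (especially subcases~(1.a), (1.b), (2) in its Case~2) is replaced by a single structural dichotomy. The price you pay is needing the small observations that $A \subseteq \{i_0, j_0\}$ and that $|B| \geq 2$ forces $A \subseteq \{\text{center}\}$, but these are immediate from your distance computation~(a). One cosmetic point: when you write ``with common center $i_0$'' you are silently relabeling so that the center is the first index of the fixed pair $\{i_0, j_0\}$; this is harmless by symmetry but could be said explicitly.
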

\begin{proof}
	We consider the following cases. 
	
	{\it \bf Case 1.} There exists a $y \in \tau$ such that $N(y) \cap \tau \neq  \emptyset$. 
	
	\begin{itemize}
		\item Suppose $|N(y) \cap \tau|=1$, and let $N(y) \cap \tau = \{x\}$. Since $x \in N(y)$, there exists  $l\in [n]^{\pm}$ such that 
		$y = x^{l}$.  We first show that  $N[x] \subseteq \tau$. If possible, let $s \in [n]^{\pm}$ such that $x^{s} \notin \tau$. Then 
        there exists  $v\in\tau$ such that $d(v, x^{s}) \geq 3$. Since $x \in \tau$, we have $d(v, x) \leq 2$. Furthermore, $d(x^{s}, x^{t}) = 2$ for every $t \in [n]^{\pm} \setminus \{s\}$ implies that $v \neq x^t$. Therefore, $d(x, v) = 2$, and hence $v = x^{i, j}$ for some  $i, j \in [n]^{\pm}$, or there exists $k \in [n]$ such that  $v \in \{x^{[k; 2^+]}, x^{[k; -2^+]}\}$.
        
        Since $d(y, v) \leq 2$, for $v=x^{[k; 2^+]}$ we would have $l=k$ and for  $v=x^{[k; -2^+]}$ we would have $l=-k$. However, in both of these situations $v \in N(y)$, which is a contradiction as $N(y) \cap \tau = \{x\}$. On the other hand, if  $v = x^{i, j}$ for some $i, j \in [n]^{\pm}$, then $N(y) \cap \tau = \{x\}$ implies that $l \notin \{i, j\}$. Therefore, $d(v, y) = 3$, which contradicts the fact that $v,y\in\tau$.  Hence,  $N[x]\subseteq \tau$. 
		
		We now show that $\tau \subseteq N[x]$. Suppose, $w\in \tau\setminus N[x]$. Since $x, w \in \tau$, we have $d(x, w) = 2$. Therefore, either $w = x^{p, q}$ for some  $p, q \in [n]^{\pm}$, or there exists $k \in [n]$ such that $w \in \{x^{[k; 2^+]}, x^{[k; -2^+]}\}$.	 
		If  $w = x^{p, q}$, 
		then $d(x^{\alpha}, w) = 3$ for  $\alpha \in [n]^{\pm} \setminus \{p, q\}$, a contradiction as $x^{\alpha} \in \tau$. 
		If $w \in \{x^{[k; 2^+]}, x^{[k; -2^+]}\}$, then $d(x^{\beta}, w) =3$ for any $\beta\in [n]\setminus \{k\}$,  which is again a contradiction. Thus, we conclude that  $\tau = N[x]$ and it is of the type $(i)$.

		\item Let  $|N(y) \cap \tau| \geq 2$.

  Then there exists $i_0, j_0 \in [n]^{\pm}$ such that $y^{i_0}, y^{j_0} \in \tau.$ Thus $\{y, y^{i_0}, y^{j_0}\} \subseteq \tau$. Note that $|N(y^{i_0}) \cap \tau | \geq 1$, as $y\in N(y^{i_0}) \cap \tau$. If $|N(y^{i_0}) \cap \tau | =1 $, then it follows from the previous part that $\tau = N[y]$.  

Let $|N(y^{i_0}) \cap \tau | \geq 2$. Then there exists  $ w\in \tau \setminus \{y\}$ such that $w \in N( y^{i_0})$.  Then
		$w = (y^{i_0})^{ k}$ for some $k \in [n]^{\pm}$. If $k = -i_0$, then $w= y$. Hence $k \neq -i_0$. If $j_0 = -i_0$, then $d(y^{j_0}, w) = 3$, a contradiction. Hence $j_0 \neq \pm i_0$. Since  $d(y^{j_0}, w) \leq  2$, we get that  $k = j_0$.  Thus $w = y^{i_0, j_0}$ and therefore $\{y, y^{i_0}, y^{j_0}, y^{i_0, j_0}\} \subseteq \tau$.

      Suppose  there exists $u \in \tau \setminus \{y, y^{i_0}, y^{j_0}, y^{i_0, j_0}\}$. If $u \in N(y)$, then $u = y^{i}$ for some $i \in [n]^{\pm} \setminus \{i_0, j_0\}$. Here $d(u, y^{i_0, j_0}) = 3$, a contradiction. Hence $u \notin N( y)$, {\it i.e.}, $d(y, u) = 2$. If   $u \in \{y^{[l; 2^+]}, y^{[l; -2^+]}\}$ for some $l \in [n]$, then  $d(u, v) = 3$ for some $v \in \{y, y^{i_0}, y^{j_0}, y^{i_0, j_0}\}$. Hence $u = y^{j, k}$ for some $j, k \in [n]^{\pm}$. If $\{i_0, j_0\} \cap \{j, k\} = \emptyset$, then $d(y^{i_0,j_0}, u) = 4$, a contradiction. Hence $\{i_0, j_0\} \cap \{j, k\} \neq \emptyset$. Without loss of generality, we assume that $i_0 \in \{j, k\}$. In this case $d(u, y^{j_0}) = 3$, a contradiction. Thus $\tau = \{y, y^{i_0}, y^{j_0}, y^{i_0, j_0}\} $. Hence $\tau$ is of the type $(ii)$.
	\end{itemize}
    
	\noindent {\it \bf Case 2.} $N(y) \cap \tau = \emptyset$ for all $y \in \tau$.
    
    Let $y \in \tau$. Choose  $v \in \tau$ such that $v \neq y$. Since $N(y) \cap \tau = \emptyset$ and $d(y, v) \leq 2$, we have $d(y,v ) = 2$.

	\begin{itemize}
		\item[(1)] Let $v= y^{i_0, j_0}$ for some $i_0, j_0 \in [n]^{\pm}$, where   $|i_0| \neq |j_0|$. 

Then, $\{y, y^{i_0, j_0}\} \subseteq \tau$. Since  $d(y^{i_0,t }, y) = 2 = d(y^{i_0, t}, y^{i_0, j_0})$ for every $t \in [n]^{\pm} \setminus \{\pm i_0, \pm j_0\}$, we see that $\{y, y^{i_0, j_0}, y^{i_0, t}\} \in\vr{\mathbb{Z}^n}{2} $. Thus $\{y, y^{i_0, j_0}\}$ is  not a maximal simplex. Let $u \in \tau \setminus \{y, y^{i_0, j_0}\}$. By the assumption of $N(u) \cap \tau = \emptyset$, we have $d(y, u) = 2$. Therefore, either $u \in \{y^{[l; 2^+]}, y^{[l; -2^+]}\}$ for some $l \in [n]$ or $u = y^{i, j}$ for some $ i, j \in [n]^{\pm}$. 
		
		\vspace{.25cm}
		
		\begin{itemize}
			\item[(1.a)] Let $u \in \{y^{[l; 2^+]}, y^{[l; -2^+]}\}$ for some $l \in [n]$. Without loss of generality, let  $u = y^{[l; 2^+]}$.  Then,  $\{y, y^{i_0, j_0}, y^{[l; 2^+]}\} \subseteq \tau$. If $l \notin \{i_0, j_0\}$, then $d(u, y^{i_0, j_0}) > 2$, and hence $l \in \{i_0, j_0\}$.  Without loss of generality, assume that $l = i_0$.  We show that $\tau =  \{y, y^{[i_0; 2^+]}\} \cup  \{y^{i_0, j} : j \in [n]^{\pm} \setminus \{\pm i_0\}\}$.  
            
            For every $j \in  [n]^{\pm} \setminus \{\pm i_0,\pm j_0\}$, each of $d(y, y^{i_0, j}), d(y^{[i_0; 2^+]}, y^{i_0, j} )$,  and $d(y^{i_0, j_0}, y^{i_0, j})$ is $2$. Therefore,  $\{y, y^{i_0, j_0}, y^{[i_0; 2^+]}\}$ is not a maximal simplex. Let $x \in \tau\setminus \{y, y^{i_0, j_0}, y^{[i_0; 2^+]}\} $. Since  $d(x, y) =  2=  d(x, y^{i_0, j_0} ) = d(x, y^{[i_0; 2^+]})$, we have $x = y^{i_0, j}$ for some  $j \in  [n]^{\pm} \setminus \{\pm i_0, j_0\}$. Furthermore, for two distinct integers $j_1, j_2 \in [n]^{\pm} \setminus \{\pm i_0, j_0\}$, we have $d(y^{i_0, j_1}, y^{i_0, j_2}) = 2$. Since $\tau$ is maximal,  we conlclude that $\{y, y^{[i_0; 2^+]}\} \cup  \{y^{i_0, j} : j \in [n]^{\pm} \setminus \{\pm i_0\}\} \subseteq \tau $. Now, for a vertex $z \in \{y , y^{[i_0; 2^+]}\} \cup  \{y^{i_0, j} : j \in [n]^{\pm} \setminus \{\pm i_0\}\} $, we have $d(y^{i_0}, z) = 1$, and hence there exists some $\tilde{z} \in \tau \setminus\{y,y^{[i_0; 2^+]}\} \cup  \{y^{i_0, j} : j \in [n]^{\pm} \setminus \{\pm i_0\}\}$. But the only choice for  $\tilde{z}$ is $y^{i_0}$. Since  $y^{i_0} \in N(y)$ and $N(y) \cap \tau = \emptyset$, we conclude that $y^{i_0} \notin \tau$. 
			This contradicts the fact that $\tau$ is a maximal simplex.

			Using a similar argument, if $u= y^{[l; -2^+]}$, then we get a contradiction.  Hence, this case is not possible.
			\item[(1.b)] Let $u= y^{i, j}$ for some  $ i, j \in [n]^{\pm}$.  If $\{i, j\} \cap \{i_0, j_0\} = \emptyset $, then $d(u, y^{i_0, j_0}) \geq 3$, a contradiction. Hence $\{i, j\} \cap \{i_0, j_0\} \neq \emptyset$. Without loss of generality, let $i = i_0$. Then $\{y, y^{i_0, j_0}, y^{i_0, j}\} \subseteq \tau.$  Since $N(y) \cap \tau = \emptyset$, we have $y^{i_0} \notin \tau$. Further, since $\tau$ is maximal, there exists  $z \in \tau$ such that $d(z, y^{i_0}) \geq 3$. Clearly $d(y, z) = 2$. Observe that   $z = y^{k,l}$ for some $k, l \in [n]^{\pm}$.

			Since $d(z, y^{i_0}) \geq 3, i_0 \notin \{k, l\}$. Using the fact that $d(z, y^{i_0, j_0}) = 2 = d(z, y^{i_0, j})$, we conclude that $\{k, l\} = \{j_0, j\}$. Thus $\{y, y^{i_0, j_0}, y^{i_0, j}, y^{j_0, j}\} \subseteq \tau.$  Suppose there exists a vertex $w \in \tau \setminus \{y, y^{i_0, j_0}, y^{i_0, j}, y^{j_0, j}\}$.  Then $N(y) \cap \tau = \emptyset $ implies that $d(y, w) = 2$ and therefore  $w = y^{s, t}$ for some $s, t \in [n]^{\pm}$. Since $d(w, y^{i_0, j_0}) = 2$, $\{i_0, j_0\} \cap \{s, t\} \neq \emptyset$. Further, $d(w, y^{i_0, j}) = 2$ implies  that $\{i_0, j\} \cap \{s, t\} \neq \emptyset$ and $d(w, y^{j_0, j}) = 2$ implies  that  $\{ j_0, j\} \cap \{s, t\} \neq \emptyset$, which is not possible. Hence $\tau = \{y, y^{i_0, j_0}, y^{i_0, j}, y^{j_0, j}\}$. Thus  $\tau$ is of the type $(iii)$.
			
		\end{itemize}
		
		\vspace{.35cm}
		
		\item[(2)] Let $v \in \{y^{[l; 2^+]}, y^{[l; -2^+]}\}$ for some $l \in [n]$. 
        
        Without loss of generality, let $v = y^{[l; 2^+]}$. Since $d(y^{l, i}, y^{[l; 2^+]}) = 2 = d(y^{l, i}, y)$ for every $i \in [n] \setminus \{l\}$,  $\tau$ is not a maximal simplex. Let $x \in \tau \setminus \{y, y^{[l; 2^+]}\}$. Since $d(x, y) = 2 = d(x, y^{[l; 2^+]})$, we  get $x = y^{l, j_0}$ for some $j_0\in [n]^{\pm} \setminus \{l, -l\}$.  Hence $\{y, y^{[l; 2^+]}, y^{l, j_0}\} \subset \tau$. Using the same argument as in $(1.a)$, we get a contradiction. Hence this case is not possible. 
	\end{itemize}
    \vspace{-0.4 cm}
\end{proof}

Fix an $m > 0$. Recall from Section \ref{sec:vr1}, that  $\mathcal{G}_m^n$ denote the induced  subgraph $\Z^n[\{0, \ldots, m\}^n]$ and   $\Delta_m^{n,2} = \vr{\mathcal{G}_m^n}{2}$. 
The following Lemma is a consequence of Lemma \ref{thm:maximal2}.

\begin{lemma} \label{thm:maximalsimplex_m_case} 
	Let  $n \geq 2$, and let $\tau $ be a maximal simplex of $\Delta_m^{n,2}$. Then one of the following is true:
	\begin{itemize}
		\item[(i)] $\tau = N[x] \cap V(\G_m^n)$ for some $x \in V(\Z^n)$.
		\item[(ii)] $\tau = \{x, x^{i_0}, x^{j_0}, x^{i_0, j_0}\} \cap V(\G_m^n)$ for some $x \in V(\Z^n)$ and $i_0, j_0 \in [n]^{\pm}$.
		\item[(ii)] $\tau = \{x,x^{i_0,j_0},x^{j_0,k_0},x^{i_0,k_0}\}\cap V(\G_m^n)$ for some $x \in V(\Z^n)$ and $i_0, j_0, k_0 \in [n]^{\pm}$.
		%
		
	\end{itemize}
\end{lemma}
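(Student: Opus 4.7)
The plan is to reduce the classification to \Cref{thm:maximal2} by embedding $\Delta_m^{n,2}$ into $\vr{\Z^n}{2}$. The key observation is that since Vietoris--Rips complexes are flag and the Manhattan distance between two vertices of $\G_m^n$ coincides with their graph distance in the induced subgraph $\G_m^n$ (shortest paths between points of $\{0,\ldots,m\}^n$ never leave the box), the complex $\Delta_m^{n,2}$ is precisely the induced (full) subcomplex of $\vr{\Z^n}{2}$ on the vertex set $V(\G_m^n)$. In particular, a finite subset of $V(\G_m^n)$ is a simplex of $\Delta_m^{n,2}$ if and only if its pairwise Manhattan distances are all at most $2$, which is the defining condition for a simplex of $\vr{\Z^n}{2}$.

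Given a maximal simplex $\tau$ of $\Delta_m^{n,2}$, I regard it as a simplex of $\vr{\Z^n}{2}$. Every simplex of $\vr{\Z^n}{2}$ sits inside a closed ball of radius $2$ around any one of its vertices, and such a ball in $\Z^n$ is finite; hence simplex cardinalities are bounded and $\tau$ extends to a maximal simplex $\tilde\tau$ of $\vr{\Z^n}{2}$. For $n \geq 3$, \Cref{thm:maximal2} asserts that $\tilde\tau$ is of one of the three listed forms. For $n = 2$, type (iii) is vacuous and the same case analysis as in \Cref{thm:maximal2} still applies: the only place where $n \geq 3$ was needed was to extend $\{y, y^{i_0,j_0}\}$ using a third direction, but when $n=2$ any candidate Case~2 configuration can still be enlarged by adjoining $y^{i_0}$, contradicting maximality of $\tilde\tau$.

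Next I will show $\tau = \tilde\tau \cap V(\G_m^n)$. The inclusion $\tau \subseteq \tilde\tau \cap V(\G_m^n)$ is immediate from $\tau \subseteq \tilde\tau$ and $\tau \subseteq V(\G_m^n)$. Conversely, $\tilde\tau \cap V(\G_m^n)$ is a finite subset of $V(\G_m^n)$ whose pairwise Manhattan distances are at most $2$, hence a simplex of $\Delta_m^{n,2}$ that contains $\tau$; maximality of $\tau$ forces equality. Substituting the three possible forms of $\tilde\tau$ from \Cref{thm:maximal2} into $\tilde\tau \cap V(\G_m^n)$ yields precisely the three conclusions of the lemma.

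The only real obstacle is the $n = 2$ case, since \Cref{thm:maximal2} is stated for $n \geq 3$; I would dispose of it by either a short adaptation of the Case~2 analysis as indicated above, or equivalently by direct enumeration showing that maximal simplices of $\vr{\Z^2}{2}$ are exactly closed neighborhoods $N[x]$ and unit squares $\{x, x^{i_0}, x^{j_0}, x^{i_0,j_0}\}$. Once this is in place, the rest of the proof is routine bookkeeping via the extension-and-intersection argument.
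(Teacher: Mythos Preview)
Your extend-and-intersect argument is exactly the intended derivation: the paper states only that the lemma ``is a consequence of \Cref{thm:maximal2}'' and gives no further proof, and your observation that $\Delta_m^{n,2}$ is the full subcomplex of $\vr{\Z^n}{2}$ on $V(\G_m^n)$ together with $\tau=\tilde\tau\cap V(\G_m^n)$ is the natural way to make that precise. One small caveat on the $n=2$ remark: adjoining $y^{i_0}$ does not literally work for every Case~2 configuration (e.g.\ $\{(0,0),(1,1),(0,2)\}$ rejects $(1,0)$), but one of $y^{i_0},y^{j_0}$ always does, and your fallback of direct enumeration in $\vr{\Z^2}{2}$ settles it cleanly.
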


We now give a brief description of Forman's discrete Morse theorey \cite{Forman1998}. For more detail, we refer to \cite{Kozlovbook}. 
\begin{definition}\cite[Definition 11.1]{Kozlovbook}
	A {\it partial matching} in a poset $P$ is a subset $\M$ of $P \times P$ such that
	\begin{itemize}
		\item $(a,b) \in \M$ implies $b \gg a$, {\it i.e. $a<b$ and $\not\exists \,c$ such that $a<c<b$}.
		\item Each element  in $P$ belongs to at most one element of $\M$.
	\end{itemize}
\end{definition}
If $\M$ is a  partial matching on a poset $P$,
then there exists  $A \subset P$ and an injective map $f: A
\rightarrow P\setminus A$ such that $f(x)\gg x$ for all $x \in A$.

\begin{definition}\label{d1}
	An {\it acyclic matching} is a partial matching  $\M$ on the poset $P$ such that there does not exist a cycle
	\begin{eqnarray*}
		f(x_1)  \gg x_1 \ll f( x_2) \gg x_2  \ll f( x_3) \gg x_3 \dots   f(x_t) \gg x_t  \ll f(x_1), t\geq 2.
	\end{eqnarray*}
\end{definition}
For an acyclic partial matching on $P$, those elements of $P$ that do not belong to the matching are called
{\it critical}.

\begin{theorem}\cite[Theorem 11.13]{Kozlovbook}\label{theorem:Morse_theory} (Main theorem of Discrete Morse Theory)
	
    Let $X$ be a simplicial complex and $\A$ be an acyclic matching on the face poset of $X$ 
	such that the empty set is not critical. 
	Then, $X$ is homotopy equivalent to a cell complex which has a $d$-dimensional cell for each $d$-dimensional
	critical face of $X$ together with an additional $0$-cell.
\end{theorem}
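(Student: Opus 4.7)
The strategy is to realize the claimed homotopy equivalence as a finite sequence of elementary collapses on $X$, one per matched pair of $\A$, ultimately leaving a CW subcomplex whose cells are in bijection with the critical faces. Recall the standard fact: if $\tau$ is a simplex with a free face $\sigma$ (that is, $\tau$ is the unique proper coface of $\sigma$ currently present), then removing the pair $\{\sigma,\tau\}$ is a strong deformation retraction, an \emph{elementary collapse}. The entire content of the theorem is that acyclicity of $\A$ allows the matched pairs to be scheduled in an order making each such removal legitimate.

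To build the schedule, form a directed graph $D_{\A}$ on the face poset of $X$ by reversing every Hasse edge $\tau \gg \sigma$ whose pair $(\sigma,\tau)$ lies in $\A$, and keeping the other Hasse edges with their downward orientation $\tau \to \sigma$. The acyclic-matching condition of Definition~\ref{d1} is precisely the statement that $D_{\A}$ has no directed cycle involving matched edges, and this produces a linear order $(\sigma_1,\tau_1),(\sigma_2,\tau_2),\dots$ on the matched pairs with the following property: if a face of $\tau_k$ is matched upward to some $\tau_j\neq\tau_k$, then $j<k$. One then verifies by induction on $k$ that after the first $k-1$ collapses have been performed, $\sigma_k$ is a free face of $\tau_k$ in the remaining subcomplex; any would-be obstructing coface of $\sigma_k$ is either critical (ruled out by the ordering via a short cycle argument) or matched upward to a simplex that has already been removed. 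Iterating, the residual complex carries the desired CW model indexed by the critical simplices, with one additional $0$-cell corresponding to the matched empty face, which is needed as a basepoint since the vertex paired with $\emptyset$ is itself non-critical and thus contributes no $0$-cell by the enumeration of critical faces.

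The principal obstacle is verifying the freeness condition at each stage; this is where acyclicity is indispensable. Without it, two matched pairs could mutually obstruct each other's collapse (for instance, pairs $(\sigma_1,\tau_1)$ and $(\sigma_2,\tau_2)$ with $\sigma_2\subsetneq \tau_1$ and $\sigma_1\subsetneq \tau_2$), so no global ordering would keep every collapse valid. Once the cycle-free structure of $D_{\A}$ has been used to eliminate such obstructions, the bookkeeping that matches the remaining simplices with the critical cells of the CW complex is routine, and the homotopy equivalence follows by composing the individual elementary-collapse deformation retracts.
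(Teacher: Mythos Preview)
The paper does not prove this theorem; it is quoted verbatim from Kozlov's book as a tool, with no argument supplied. So there is nothing in the paper to compare against, and your sketch must stand on its own.

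On its merits, your plan has a genuine gap. You assert that when you reach the pair $(\sigma_k,\tau_k)$, any coface of $\sigma_k$ other than $\tau_k$ is ``either critical (ruled out by the ordering via a short cycle argument) or matched upward to a simplex that has already been removed.'' The second alternative is fine, but the first is simply false: a critical simplex $\tau'$ can perfectly well have $\sigma_k$ as a facet, and no cycle in $D_{\A}$ is created thereby (the edge $\tau'\to\sigma_k$ is just an ordinary downward Hasse arrow, and $\tau'$ has no upward arrow out of it since it is unmatched). In that situation $\sigma_k$ is \emph{not} a free face and the elementary simplicial collapse you want to perform is illegal. Concretely, take the boundary of a $2$-simplex and match two of the three edges each with one of their endpoints; the third edge is critical and contains a matched vertex as a face, blocking that vertex's collapse.

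The repair is not a tweak of the ordering but a change of category: one must allow \emph{internal} (CW) collapses that remove the pair $(\sigma_k,\tau_k)$ while simultaneously modifying the attaching maps of all remaining cells that met $\sigma_k$. This is why the resulting object is only a CW complex, not a simplicial subcomplex of $X$, and why the theorem is stated that way. Kozlov's proof proceeds via a poset map to a linear order and the Patchwork Theorem (or equivalently Forman's gradient-flow argument), precisely to handle this bookkeeping; your outline would need to incorporate one of these mechanisms to close the gap.
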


The following remark is an immediate consequence of Theorem \ref{theorem:Morse_theory}.

\begin{remark} \label{remark:wedge of spheres}
	If an acyclic matching on a face poset of a simplicial complex $\Delta$ has critical faces only in a fixed dimension $i$, then $\Delta$ is homotopy equivalent to a wedge of spheres of dimension $i$.
\end{remark}

Let $X$ be a simplicial complex with vertex set $V(X) = \{v_1, v_2, \ldots, v_n\}$. Assume that the vertices of $X$ are linearly ordered as $v_1 < v_2 < \cdots < v_n$.  Let $\mathcal{P}(X)$ denote the face poset of $X$. We define an acyclic matching $\mu^X$ on $\mathcal{P}(X)$ as follows:

Let $S_1^X = \{\sigma \in \cP(X): v_1 \notin \sigma \ \text{and} \ \sigma \cup \{v_1\} \in \cP(X)\}$. Define $$\mu_1^X : S_1^X \to \cP(X) \setminus S_1^X  \ \text{by} \  \mu_1^X(\sigma) = \sigma \cup \{v_1\}.$$
Then observe that $\mu_1^X$ is an acyclic matching on $\cP(X)$. Let $\T_1^X = \cP(X) \setminus (S_1^X \cup \mu_1(S_1^X))$. 
For  $2 \leq i \leq k ,$ define 
\begin{eqnarray}
	&&S_{i}^X = \{\sigma \in \T_{i-1}^X \ | \ v_i \ \notin \sigma  \ \text{and} \ \sigma \, \cup \{v_i \} \ \in \T_{i-1}^X \}, \label{am:1}\nonumber\\
	&&\mu_{i}^X : S_{i}^X  \rightarrow \T_{i-1}^X \setminus S_{i}^X ~\text{by} ~  \mu_{i}^X(\sigma) = \sigma \, \cup \{a_i\}~~\text{and}\label{am:2} \nonumber\\
	&&\T_i^X= \T_{i-1}^X \setminus (S_i^X \cup \mu_i^X(S_i^X)). \label{am:3} \nonumber
\end{eqnarray}

By the above construction, $S_{i}^X \cap S_j^X =\emptyset$ for all  $\ i \neq j$.
Let $S^X = \bigcup \limits_{i=1}^{k} S_{i}^X$. Define 
\begin{align}\label{equation:matching}
	&\mu^X : S^X \rightarrow \cP(X) \setminus S^X \ \text{by} \ \mu^X(\sigma) =\mu_{i}^X (\sigma),  
\end{align}
where $i$ is the unique element such that $\sigma \in S_{i}^X$.

From \cite[Proposition 3.2]{Goyal2021}, the matching $\mu^X$ defined in Equation (\ref{equation:matching}) is an acyclic matching.

Let $\mu^{\Delta_m^{n,2}}$ be the acyclic matching as defined in Equation (\ref{equation:matching}) with respect to the anti-lexicographic order $\prec$ on vertices of $\G_m^n$. In the rest of the section, we consider the matching $\mu^{\Delta_m^{n,2}}$ on $\cP(\Delta_m^{n,2})$, and for the convenience of notation, we denote the matching $\mu^{\Delta_m^{n,2}}$ simply by $\mu$. 
Moreover, $S_{i}^X,\T_{j}^X$, and $S^{X}$ will be denoted as $S_{i},\T_{j}$, and $S$ if the underlying simplicial complex $X$ is clear from the context. We now characterize the critical cells corresponding to the matching $\mu$.

\begin{proposition}\label{smallestvertex}
	
	Let $\sigma \in \Delta_m^{n,2}$ be a simplex. If there exists a vertex $x$ such that $x \prec y$ for all $y \in \sigma$, and $\sigma \cup \{x\} $ is a simplex, then  $\sigma$ is not a critical cell for the matching $\mu$. 
\end{proposition}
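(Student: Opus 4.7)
The plan is to show directly that $\sigma$ lies in the domain of $\mu$; since critical cells are exactly the elements neither in the domain nor the image of $\mu$, this will show $\sigma$ is not critical. Using the enumeration $v_{1} \prec v_{2} \prec \cdots$ of the vertices of $\G_{m}^{n}$, let $j^{*}$ denote the smallest index such that $v_{j^{*}} \prec y$ for every $y \in \sigma$ and $\sigma \cup \{v_{j^{*}}\}$ is a simplex of $\Delta_{m}^{n,2}$. Such a $j^{*}$ exists by hypothesis, since the vertex $x$ provided satisfies both conditions. The goal reduces to verifying that $\sigma \in S_{j^{*}}$, which means exactly $v_{j^{*}} \notin \sigma$ (immediate, as $v_{j^{*}}$ is strictly less than every vertex of $\sigma$), $\sigma \in \T_{j^{*}-1}$, and $\sigma \cup \{v_{j^{*}}\} \in \T_{j^{*}-1}$.

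The two nontrivial containments both follow from a single structural observation. If $\eta$ is any simplex of $\Delta_{m}^{n,2}$ whose vertices all satisfy $v_{j^{*}} \preceq y$, then $\eta$ contains none of $v_{1}, \ldots, v_{j^{*}-1}$. Since every output of an earlier matching step has the form $\tau \cup \{v_{j}\}$ with $j<j^{*}$, and therefore contains $v_{j}$, such an $\eta$ cannot arise as an output of any $\mu_{j}$ with $j < j^{*}$. Hence the only way $\eta$ can leave the poset before step $j^{*}$ is by being an input, i.e.\ by lying in some $S_{j}$ with $j<j^{*}$, and that membership forces $\eta \cup \{v_{j}\} \in \T_{j-1}$, in particular forces $\eta \cup \{v_{j}\}$ to be a simplex of $\Delta_{m}^{n,2}$.

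Specializing this observation to $\eta = \sigma$ shows that any removal of $\sigma$ before step $j^{*}$ would supply some $j < j^{*}$ with $\sigma \cup \{v_{j}\}$ a simplex, violating the minimality of $j^{*}$; so $\sigma \in \T_{j^{*}-1}$. Specializing to $\eta = \sigma \cup \{v_{j^{*}}\}$, whose minimum vertex is $v_{j^{*}}$, shows that any premature removal of this face would supply some $j<j^{*}$ with $\sigma \cup \{v_{j^{*}}, v_{j}\}$ a simplex, whence its subsimplex $\sigma \cup \{v_{j}\}$ is also a simplex, again contradicting minimality of $j^{*}$; so $\sigma \cup \{v_{j^{*}}\} \in \T_{j^{*}-1}$. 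Therefore $\sigma \in S_{j^{*}}$ and $\mu_{j^{*}}$ pairs $\sigma$ with $\sigma \cup \{v_{j^{*}}\}$, confirming that $\sigma$ is not critical. The only care needed is in tracking the recursive definitions of $S_{j}$ and $\T_{j}$; once the principle that outputs always contain the pivot vertex $v_{j}$ is in hand, the entire argument collapses to a clean application of the minimality of $j^{*}$.
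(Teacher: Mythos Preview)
Your proof is correct and follows essentially the same approach as the paper: choose the minimal vertex $z=v_{j^*}$ witnessing the hypothesis and show that $\mu$ pairs $\sigma$ with $\sigma\cup\{z\}$. The paper's proof simply asserts that $\sigma$ and $\sigma\cup\{z\}$ lie in $\T_v$ for all $v\prec z$, whereas you carefully justify this via the observation that outputs of earlier steps must contain the pivot vertex; your version is more detailed but the strategy is identical.
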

\begin{proof}
	Let $z$ be the smallest element such that $z \prec y$ for all $y \in \sigma$ and $\sigma \cup \{z\}$ is a simplex. Clearly, $z \notin \sigma$. Then $\sigma$ and $\sigma \cup \{z\} \in \T_{v}$ for all $v \prec z$. Therefore, by the definition of $\mu$, we get that $\mu(\sigma) = \mu_z(\sigma) = \sigma \cup \{z\}$. Hence, $\sigma$ is not a critical cell.
\end{proof}

\begin{lemma}\label{lemma:critical_1}
	The matching $\mu$ yields no critical cells of dimension $0$ and $1$ in $\cP(\D_m^{n,2})$ for $n\geq 3$.
\end{lemma}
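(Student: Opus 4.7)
Plan. The strategy is to invoke Proposition \ref{smallestvertex} wherever possible, and to handle the residual configurations by showing directly that $\sigma$ is matched via the downward direction of $\mu$ at step $u = \min(\sigma)$. For dimension $0$, the empty set is matched with $\{v_1\}$ at step $1$, so $\{v_1\}$ is not critical. For any other vertex $v$, let $j_0 = \max\{i : v_i > 0\}$ and set $x = v - e_{j_0}$; then $x \prec v$ and $d(x,v) = 1$, so Proposition \ref{smallestvertex} gives that $\{v\}$ is not critical.

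For dimension $1$, fix $\sigma = \{u,v\}$ with $u \prec v$. If $u = v_1$ then $\{v\} \in S_1$ and $\mu_1(\{v\}) = \sigma$, so $\sigma$ is matched. Assume $u \neq v_1$, set $i^{*} = \max\{i : u_i \neq v_i\}$ (so $v_{i^{*}} > u_{i^{*}}$), and split on the shape of $v - u$. If $d(u,v)=1$, take $x = u - e_{j_0}$ where $j_0$ is the largest positive coordinate of $u$. If $d(u,v)=2$ and $v = u + e_{i^{*}} - e_k$, take $x = u - e_k$; if $v = u + e_{i^{*}} + e_k$ with $u_{i^{*}} \geq 1$, take $x = u - e_{i^{*}} + e_k$; if $v = u + 2 e_{i^{*}}$ (resp.\ $v = u + e_{i^{*}} + e_k$) with $u_{i^{*}} = 0$ but some $j > i^{*}$ (resp.\ some $k < j < i^{*}$) satisfies $u_j > 0$, take $x = u + e_{i^{*}} - e_j$ (resp.\ $x = u + e_k - e_j$). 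In every such sub-case a short computation verifies $x \prec u$ and $\{x,u,v\} \in \D_m^{n,2}$, so Proposition \ref{smallestvertex} applies.

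The residual \emph{bad} configurations, in which no $x \prec u$ with $\{x,u,v\} \in \D_m^{n,2}$ exists, are exactly (B1) $v = u + 2 e_{i^{*}}$ with $u_j = 0$ for all $j > i^{*}$, and (B2) $v = u + e_{i^{*}} + e_k$ with $k < i^{*}$ and $u_j = 0$ for all $j > k$. In both cases the plan is to show that $\sigma$ is matched with $\{v\}$ at step $u$; equivalently $\{v\} \in S_i$ where $v_i = u$, which requires $\sigma, \{v\} \in \T_{i-1}$. The containment $\sigma \in \T_{i-1}$ is immediate from the bad-case hypothesis (no $w \prec u$ satisfies $\sigma \cup \{w\} \in \D_m^{n,2}$), while $\{v\} \in \T_{i-1}$ reduces to the key estimate $d(v,w) \geq 3$ for every $w \prec u$. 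This estimate is the main obstacle, and it rests on the observation that the anti-lexicographic relation $w \prec u$ forces $w_j = 0$ for every $j$ strictly above the largest positive coordinate of $u$; consequently $v$'s support at $i^{*}$ (and at $k$, in case (B2)) protrudes beyond the support of any admissible $w$, contributing at least $2$ to $d(v,w)$, while any genuine perturbation of $w$ at the remaining coordinates contributes at least $1$ more. Verifying that (B1) and (B2) exhaust the failures of Proposition \ref{smallestvertex}, and uniformly checking the bound $d(v,w) \geq 3$ in both bad cases, are the most delicate steps.
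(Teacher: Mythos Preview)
Your approach matches the paper's: enumerate edge types, apply Proposition~\ref{smallestvertex} wherever a smaller completing vertex $x\prec u$ exists, and in the residual configurations show $\sigma=\mu_u(\{v\})$ directly by verifying that both $\sigma$ and $\{v\}$ survive to step $u$. Your bad cases (B1) and (B2) are correctly identified, and the key estimate $d(v,w)\ge 3$ for every $w\prec u$ does hold in both, since $w\prec u$ together with $u_j=0$ for all $j$ above the relevant index forces $w_j=0$ there as well.

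One minor exposition issue: your explicit list of ``good'' sub-cases does not quite exhaust the complement of (B1)$\cup$(B2). For $v=u+2e_{i^*}$ you restrict to $u_{i^*}=0$, but the same construction $x=u+e_{i^*}-e_j$ (with $j>i^*$, $u_j>0$) works for any $u_{i^*}$; likewise, for $v=u+e_{i^*}+e_k$ with $u_{i^*}=0$ the index $j$ with $u_j>0$ may lie above $i^*$ rather than in $(k,i^*)$, and then $x=u+e_k-e_j$ (or $x=u+e_{i^*}-e_j$) still gives $x\prec u$ with $d(x,u)=d(x,v)=2$. Drop the $u_{i^*}=0$ restriction in the first and replace ``$k<j<i^*$'' by ``$j>k$, $j\ne i^*$'' in the second, and the good cases then cover everything outside (B1)$\cup$(B2), confirming your assertion that these two are exactly the failures of Proposition~\ref{smallestvertex}.
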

\begin{proof}
	
	Let $v \in \D_m^{n,2}$ be a vertex. If there exists $i \in [n]^{-}$ such that $v^{i} \in \D_m^{n,2}$, then $v^{i} \prec v$ and $\{v,v^{i}\}$ is a simplex. This implies that $v$ is not a critical cell. 
	
	Now, suppose there is no $i \in [n]^{-}$ such that $v^{i} \in \D_m^{n,2}$. Then, $v = (0, \ldots, 0)$, and therefore, 
$v = \mu_{v}(\emptyset)$.   
   Thus, $v$ is not a critical cell. Hence, the matching $\mu$ yields no critical cells of dimension $0$ in $\D_m^{n,2}$.
	
	Let $\gamma \in \Delta_m^{n,2}$ be a $1$-simplex. Then $\gamma$ must be one of the following four types:  
	(i) $\gamma = \{v,v^{i}\}$ for some $v,v^{i} \in V(\G_m^n)$ and $i \in [n]$, 
	(ii) $\gamma = \{v,v^{i,j}\}$ for some $v,v^{i,j} \in V(\G_m^n)$ and $i,j \in [n]$,  
	(iii) $\gamma = \{v,v^{i,j}\}$ for some $v,v^{i,j} \in V(\G_m^n)$ with $i \in [n]$ and $j \in [n]^{-}$, and (iv) $\gamma = \{v,v^{[i;2^{+}]}\}$  for some $v \in V(\G_m^n)$ and $i \in [n]$.

    \vspace{.1cm}
	\noindent \textbf{Case (i)}: Let $\gamma = \{v,v^{i}\}$ for some $v,v^{i} \in V(\G_m^n)$ and $i \in [n]$. If there exists $l \in [n]^{-}$ such that $v^{l} \in V(\mathcal{G}_m^n)$, then $v^{l} \prec x$ for all $x \in \gamma$, and $\gamma \cup \{v^{l}\} \subseteq N[v]$. From Proposition \ref{smallestvertex}, $\gamma$ is not a critical cell. Suppose no such $l \in [n]^{-}$ exists. Then $v = (0, \ldots, 0)$. Clearly, $\gamma = \mu_{v}(\{v^{i}\})$. Therefore, $\gamma$ is not a critical cell.

    \vspace{.1cm}
	\noindent\textbf{Case (ii)}: Let $\gamma = \{v, v^{i,j}\}$ for some $v, v^{i,j} \in V(\G_m^n)$ and $i,j \in [n]$. Then $v \prec v^{i,j}$.
If for some $x \prec v$, the set $\gamma \cup \{x\}$ forms a simplex, then $d(v, x) \leq 2$. Thus, $x = v^k$ for some $k \in [n]^{-}$ or $x = v^{t,l}$ for some $l \in [n]^{-}$ with $|t| < |l|$, or $x = v^{[k; -2^{+}]}$ for some $k\in [n]$. Clearly, if $x = v^k$ or $x = v^{[t; -2^{+}]}$ for some $k \in [n]^{-}$ and $t \in [n]$, then $\{x, v^{i,j}\}$ is not a $1$-simplex in $\D_m^{n,2}$.

Now, if there exists an $s \in [n]^{-}$ such that $v^s \in \G_m^n$ and $|k| < |s|$ for some $k \in {i, j}$, then $\{v, v^{i,j}, v^{k,s}\}$ is a simplex and $v^{k,s} \prec y$ for all $y \in \gamma$. Thus, $\gamma$ is not a critical cell.

If there is no $s \in [n]^{-}$ such that $v^s \in \G_m^n$ and $|k| < |s|$ for some $k \in \{i, j\}$, then there is no $y \prec v$ such that $\{y,v^{i,j}\}$ is a simplex. In this case, we conclude that neither $\gamma$ nor $\gamma \setminus \{v\}$ belongs to $S_y \cup \mu_y(S_y)$ for any $y \prec v$. Hence, $\gamma = \mu_v(v^{i,j})$, which implies that $\gamma$ is not a critical cell.

    \vspace{.1cm}
	\noindent
	\textbf{Case (iii)}: Let $\gamma = \{v,v^{i,j}\}$ for some $v,v^{i,j} \in V(\G_m^n)$ with $i \in [n]$ and $j \in [n]^{-}$. Since $v^{i,j} \in V(\G_m^n)$, we also have $v^{j} \in V(\G_m^n)$. It is clear that $v^{j} \prec v$ and $v^{j} \prec v^{i,j}$. Since $\{v, v^{j}, v^{i,j}\}$ is a $2$-simplex in $\D_m^{n,2}$, it follows from Proposition \ref{smallestvertex} that $\gamma$ is not a critical cell.

    \vspace{.1cm}
	\noindent\textbf{Case (iv)}: Let $\gamma = \{v,v^{[i;2^{+}]}\}$ for some $v \in V(\G_m^n)$ and $i \in [n]$. First, assume that $\gamma = \{v,v^{[i;2^{+}]}\}$ for some $i \in [n]$. If for some  $k\in [n]^{-}$, where $|k|\neq i$, we have $v^k\in V(\G_m^n)$, then $\{v,v^{i,k},v^{[i;2^{+}]}\}$ is a simplex and $v^{i,k}\prec y$ for $y\in\gamma$. Thus, $\gamma$ is not a critical cell. On the other hand, if there is no $k\in [n]^{-}$, $|k|\neq i$ such that $v^k\in V(\G_m^n)$ then $v_t=0$ for $t\neq i$. Thus,  for any $x\prec v$, $\{x,v^{[i;2^{+}]}\}$ is not a simplex. Hence, we conclude that neither $\gamma$ nor $\gamma \setminus \{v\}$ belongs to $S_y \cup \mu_y(S_y)$ for any $y \prec v$. Therefore, $\gamma = \mu_v(v^{[i;2^{+}]})$, which implies that $\gamma$ is not a critical cell.

    \vspace{.1cm}
    
	Hence the matching $\mu$ yields no critical cell of dimension $1$ in $\D_m^{n,2}$.
\end{proof}

\begin{lemma}\label{lemma:critical_2}
	The matching $\mu$ yields no critical cells of dimension $2$ in $\cP(\D_m^{n,2})$.
\end{lemma}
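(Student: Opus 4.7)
The plan is to mirror the case analysis of Lemma \ref{lemma:critical_1}, one dimension higher. Fix a $2$-simplex $\sigma = \{a, b, c\}$ of $\D_m^{n,2}$ with $a \prec b \prec c$. To prove $\sigma$ is not $\mu$-critical it suffices to establish one of: (A) there exists $v \prec a$ with $\sigma \cup \{v\} \in \D_m^{n,2}$, in which case Proposition \ref{smallestvertex} pairs $\sigma$ upward with its four-vertex extension; or (B) the face $\{b, c\}$ is not touched by $\mu_w$ for any $w \prec a$, so that $\sigma = \mu_a(\{b, c\})$ gets paired downward at stage $a$.

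The first step is to enumerate the combinatorial types of $\sigma$ using Lemma \ref{thm:maximalsimplex_m_case}. Every $2$-simplex sits inside a maximal simplex of one of the three listed forms, so after renaming so that the $\prec$-minimum vertex is $v$, the possibilities reduce to a short list of shapes: $\{v, v^i, v^j\}$, $\{v, v^i, v^{i,j}\}$, $\{v, v^i, v^{[i;2^+]}\}$, $\{v, v^{i,j}, v^{i,k}\}$, $\{v, v^{i,j}, v^{j,k}\}$, $\{v, v^{i,j}, v^{[i;2^+]}\}$, $\{v^i, v^j, v^{i,j}\}$, and $\{v^{i,j}, v^{j,k}, v^{i,k}\}$, with indices drawn from $[n]^{\pm}$. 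Requiring $v$ to be the $\prec$-minimum already prunes the admissible signs considerably, since a negative label on one of the forced neighbours typically produces a vertex $\prec v$ inside the simplex, contradicting minimality.

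For each type my default move is: pick some $\ell \in [n]$ with $v_\ell > 0$ such that $\ell$ does not appear (in absolute value) among the labels used in $\sigma$. Then $v' := v^{-\ell}$ lies in $V(\G_m^n)$, satisfies $v' \prec v = a$, and remains within distance $2$ of the other two vertices of $\sigma$, so $\sigma \cup \{v'\}$ is a simplex and route (A) closes the case. This handles every ``non-extremal'' configuration and is in spirit the same trick that drives Cases (i)--(iv) of Lemma \ref{lemma:critical_1}. In the remaining extremal cases, namely those in which every positive coordinate of $v$ is already moved by $\sigma$, no such $\ell$ exists, but then $v$ is so constrained that no $w \prec v$ can be simultaneously within distance $2$ of both $b$ and $c$. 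I would verify this by direct inspection of each extremal shape; once confirmed, $\{b, c\}$ is never in $S_w \cup \mu_w(S_w)$ for any $w \prec a$, so the matching pairs $\sigma = \mu_a(\{b, c\})$ at stage $a$ via route (B).

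The main obstacle is the combinatorial bookkeeping rather than any conceptual difficulty: the subcases branch on the signs of the labels $i, j, k$, on which coordinates of $v$ vanish, and on whether the $v^{\cdot}$ vertices remain inside the cube $V(\G_m^n)$. Particular care is needed in the extremal cases to rule out all potential earlier matching partners of $\{b, c\}$, not only the obvious single-coordinate neighbours. Conceptually the argument is parallel to Lemma \ref{lemma:critical_1}; the bulk of the effort lies in executing the enlarged table of cases cleanly.
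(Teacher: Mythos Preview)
Your overall strategy (either find a smaller vertex to adjoin, or show the face $\{b,c\}$ was never touched earlier so that $\sigma=\mu_a(\{b,c\})$) is the right dichotomy, and it is exactly what the paper uses. But your ``default move'' is broken for almost all of your enumerated types. If $a=v$ is the $\prec$-minimum of $\sigma$ and you step to $v':=v^{-\ell}$ with $\ell$ disjoint from the labels, then $v'$ is at distance $1$ from $v$, hence at distance at most $2$ only from vertices at distance $\le 1$ from $v$. For any vertex of $\sigma$ already at distance $2$ from $v$ (that is, $v^{i,j}$, $v^{[i;2^+]}$, etc.), you get $d(v^{-\ell},v^{i,j})=3$ whenever $\ell\notin\{|i|,|j|\}$, so $\sigma\cup\{v'\}$ is \emph{not} a simplex. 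This kills route (A) for the types $\{v,v^i,v^{i,j}\}$, $\{v,v^{i,j},v^{i,k}\}$, $\{v,v^{i,j},v^{[i;2^+]}\}$, and the others containing a distance-$2$ neighbour of $v$; these are not ``extremal'' in your sense, yet your argument does not cover them.

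The missing idea is the one the paper exploits: do \emph{not} anchor at the $\prec$-minimum of $\sigma$, but instead first observe that every $2$-simplex $\gamma$ of $\D_m^{n,2}$ lies in $N[w]$ for an appropriate center $w$ (this is the content of reducing the type-(ii) and type-(iii) faces of Lemma~\ref{thm:maximalsimplex_m_case} back to type (i); e.g.\ $\{v,v^i,v^{i,j}\}\subseteq N[v^i]$ and $\{v,v^{i,j},v^{i,k}\}\subseteq N[v^i]$). The candidate vertex to adjoin is then some $w^{l_0}$ with $l_0\in[n]^-$, which is automatically within distance $2$ of \emph{everything} in $N[w]\supseteq\gamma$. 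Only when no such $w^{l_0}$ sits below the minimum of $\gamma$ do you fall back to route (B), and at that point the constraints on $w$ are strong enough to rule out all earlier matching partners. Re-centering at the right $w$, rather than at $a$, is what makes the case analysis go through.
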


\begin{proof}
	Let $\gamma \in \Delta_m^{n,2}$ be a $2$-simplex. Then $\gamma$ is a face of a maximal simplex of the three types given in Lemma \ref{thm:maximalsimplex_m_case}.
	
    \vspace{.1cm}
	\noindent\textbf{Case (a):} Let  $\gamma$ be a face of a maximal simplex of type $\sigma=N[v]$ for some vertex $v$. Then there exist $i_0, j_0, k_0 \in [n]^{\pm}$ such that   $\gamma=\{v,v^{i_0},v^{j_0}\}$, or $\gamma=\{v^{i_0},v^{j_0},v^{k_0}\}$. We have the following subcases:

    \vspace{.1cm}
	\noindent\textbf{Subcase (i):} $\gamma=\{v,v^{i_0},v^{j_0}\}$, or $\gamma=\{v^{i_0},v^{j_0},v^{k_0}\}$, where $i_0,j_0,k_0\in[n]$. 

    \smallskip
    
	\noindent If $\gamma=\{v^{i_0},v^{j_0},v^{k_0}\}$, then $v\prec x$ for all $x\in \gamma$  and $\gamma\cup \{v\} \subseteq N[v]$. Since $v \in V(\G_m^n)$, from Proposition \ref{smallestvertex}, $\gamma$  is not a critical cell.
    
    Let us now assume that $\gamma=\{v,v^{i_0},v^{j_0}\}$. If there exists a $l_0 \in [n]^{-}$ such that $v^{l_0} \in V(\mathcal{G}_m^n)$, then $v^{l_0}\prec x$ for all $x\in \gamma$ and $\gamma\cup \{v^{l_0}\} \subseteq N[v]$. From Proposition \ref{smallestvertex}, $\gamma$ is not a critical cell. 
	Suppose there exists no $l_0 \in [n]^{-}$ such that $v^{l_0} \in V(\mathcal{G}_m^n)$.  Then $v = (0, \ldots, 0)$, and it follows that $\gamma = \mu_{v}(\{v^{i_0}, v^{j_0}\})$. Therefore, $\gamma$ is not a critical cell.

    \smallskip
    
	\noindent \textbf{Subcase (ii):} $\gamma=\{v,v^{i_0},v^{j_0}\}$ or $\gamma=\{v^{i_0},v^{j_0},v^{k_0}\}$, where  $i_0\in [n]^{-}$, $j_0,k_0\in [n]$.

    \smallskip
    
	\noindent 
	Here, $v^{i_0} \prec v^{j_0}, v^{k_0}$.
	If there exists an $l_0 < i_0$ such that $v^{l_0} \in V(\mathcal{G}_m^n)$, then  $v^{l_0} \prec x$ for all $x\in\gamma$ and $\gamma\cup \{v^{l_0}\} \subseteq N[v]$. From Proposition \ref{smallestvertex}, $\gamma$ is not a critical cell. So, assume that there exists no $l_0 < i_0$ such that $v^{l_0} \in V(\mathcal{G}_m^n)$. This implies that $v(l)=0$ for every $l>|i_0|$.
	
If $\gamma \setminus \{v^{i_0}\} \cup \{x\}$ is a simplex and $x \prec v^{i_0}$ for some $x \in V(\mathcal{G}_m^n)$, then $d(v,x)\leq 2$, and thus one of the following holds:
	\begin{itemize}
		\item $x(|i_0|) = v(|i_0|) - 1$, and $x(s) = v(s) - 1$ for some $s \in [n]$ with $s < |i_0|$ and $x(j)=v(j)$ for $j\notin \{i_0,s\}$, 
		\item $x(|i_0|) = v(|i_0|) - 2$, and $x(j)=v(j)$ for $j\neq i_0$.
	\end{itemize}
	If $x(|i_0|) = v(|i_0|) - 1$ and $x(s) = v(s) - 1$ for some $s \in [n]$ with $s < |i_0|$ and $x(j)=v(j)$ for $j\notin \{i_0,s\}$, then $d(v^{j_0}, x) \geq 3$. Similarly, if $x(|i_0|) = v(|i_0|) - 2$ and $x(j)=v(j)$ for $j\neq i_0$, then also $d(v^{j_0}, x) \geq 3$. Hence for any $x\prec v^{i_0}$, $\gamma \setminus \{v^{i_0}\} \cup \{x\}$  is not a simplex. Thus, we conclude that both $\gamma $ and $\gamma\setminus \{v^{i_0}\}$, do not belong to     $S_y \cup \mu_y(S_y)$ for all $y \prec v^{i_0}$. Therefore, by definition, we get that  $\gamma = \mu_{v^{i_0}}(\gamma \setminus \{v^{i_0}\})$. Hence $\gamma$ is not a critical cell. 
	\smallskip

	
	

    \smallskip 
    
	\noindent\textbf{Subcase(iii):} $\gamma=\{v,v^{i_0},v^{j_0}\}$ or $\gamma=\{v^{i_0},v^{j_0},v^{k_0}\}$, where $i_0,j_0,k_0\in [n]^{-}$.
	
    \smallskip 
    
	\noindent
	Observe that $v^{i_0,j_0}\in V(\mathcal{G}_m^n), \{v,v^{i_0},v^{j_0}, v^{i_0,j_0}\} \in \Delta_m^{n,2}$,  and $v^{i_0,j_0}\prec v,v^{i_0},v^{j_0}$.  Thus, from Proposition \ref{smallestvertex}, $\{v,v^{i_0},v^{j_0}\}$ is not a critical cell. Similarly, if $\gamma=\{v^{i_0},v^{j_0},v^{k_0}\}$, then $v^{i_0,j_0,k_0}\in V(\mathcal{G}_m^n), \{v^{i_0},v^{j_0},v^{k_0}\}\cup \{v^{i_0,j_0,k_0}\} \in \Delta_m^{n,2}$, and $v^{i_0,j_0,k_0}\prec x$ for all $x\in \{v^{i_0},v^{j_0},v^{k_0}\}$.  Therefore, $\{v^{i_0},v^{j_0},v^{k_0}\}$ is not a critical cell. 
	
    \smallskip

	\noindent\textbf{Subcase (iv):} $\gamma=\{v^{i_0},v^{j_0},v^{k_0}\}$, where $i_0,j_0\in[n]^{-}$ and $k_0\in[n]$.

    \smallskip 
    
	\noindent
	Without loss of generality, we assume that $|j_0|<|i_0|$. Then   $v^{i_0} \prec v^{j_0}, v^{k_0}$.
	If there exists a vertex $v^{l_0}\in V(\mathcal{G}_m^n)$ for some $l_0<i_0$, then $v^{l_0}\prec x$ for all $x\in\gamma$ and $\gamma \cup \{v^{l_0}\} \subseteq N[v]$. Hence, 
	$\gamma$ is not a critical cell by Proposition \ref{smallestvertex}. 
	
	We now assume that there is no vertex in $V(\mathcal{G}_m^n)$ of type $v^{l_0}$ for $l_0<i_0$. This implies that for any $l>|i_0|$, $v(l)=0$. Clearly, $v^{i_0,j_0,k_0}\in V(\mathcal{G}_m^n)$. 
	If $k_0<|j_0|<|i_0|$, then  $v^{i_0,j_0,k_0}\prec x$ for all $x\in \gamma$ and $\gamma\cup\{v^{i_0,j_0,k_0}\}$ is a simplex in $\D_m^{n,2}$. Therefore, from Proposition \ref{smallestvertex}, $\gamma$ is not a critical cell. Thus, we also assume that $k_0 \geq |j_0|$.
	
	Now,  we claim that $\gamma=\mu(\gamma\setminus \{v^{i_0}\})$.
	If $\gamma \setminus \{v^{i_0}\} \cup \{x\}$ is a simplex and $x \prec v^{i_0}$ for some $x \in V(\mathcal{G}_m^n)$, then one of the following holds:
	\begin{itemize}
		\item $x(|i_0|) = v(|i_0|) - 1$, and $x(s) = v(s) - t$ for some $s \in [n]$ with $s < |i|$ and $t \geq 1$, 
		\item $x(|i_0|) = v(|i_0|) - r$ for some $r \geq 2$.
	\end{itemize}
	If $x(|i_0|) = v(|i_0|) - 1$ and $x(s) = v(s) - t$ for some $s \in [n]$ with $s < |i|$ and $t \geq 1$, then $d(v^{k_0}, x) \leq 2$ if and only if $x(k_0)=v(k_0)+1$, $k_0\neq s$, and $t=1$. If $x(k_0)=v(k_0)+1$, $k_0\neq s$, and $t=1$ then $k_0<s$ and thus $|j_0|\neq s$. Therefore, $d(v^{j_0},x)\geq 3$.
    
    Similarly, if $x(|i_0|) = v(|i_0|) - r$ for some $r \geq 2$, then also $d(v^{k_0}, x) \leq 2$ if and only if  $x(k_0)=v(k_0)+1$, $k_0\neq |i_0|$, and $r=2$. If $x(k_0)=v(k_0)+1$, $k_0\neq |i_0|$, and $r=2$ then from the fact that $|j_0|\leq k_0<|i_0|$, we find that $d(v^{j_0},x)\geq 3$.

Hence for any $x \prec v^{i_0}$, $\gamma \setminus \{v^{i_0}\} \cup \{x\}$  is not a simplex.

	Thus, both $\gamma $ and $\gamma\setminus \{v^{i_0}\}$, do not belong to  $S_x \cup \mu_x(S_x)$ for all $x \prec v^{i_0}$.
	Therefore, by definition, $\gamma = \mu_{v^{i_0}}(\gamma \setminus \{v^{i_0}\}) =\mu(\gamma \setminus \{v^{i_0}\}) $. Hence $\gamma$ is not a critical cell.

	\smallskip
	
	\noindent\textbf{Case (b):} If $\gamma$ is a face of a maximal simplex of type $\sigma=\{v,v^{i_0},v^{j_0},v^{i_0,j_0}\}$ for some vertex $v$ and $i_0,j_0,k_0\in [n]^{\pm}$, then the possible types of $2$-simplices in $\sigma$ are $\{v,v^{i_0},v^{j_0}\}$, $\{v,v^{i_0},v^{i_0,j_0}\}$, $\{v,v^{j_0},v^{i_0,j_0}\}$, and $\{v^{i_0},v^{j_0}, v^{i_0,j_0}\}$. Observe that  $\{v,v^{i_0},v^{j_0}\} \subseteq N[v]$, $\{v,v^{i_0},v^{i_0,j_0}\} \subseteq N[v^{i_0}]$, $\{v,v^{j_0},v^{i_0,j_0}\} \subseteq N[v^{j_0}]$ and $\{v^{i_0},v^{j_0},v^{i_0,j_0}\}\subseteq N[v^{i_0,j_0}]$. 
	Thus, from case $(a)$ above, $\gamma$ is not a critical cell.

	\smallskip
	\noindent\textbf{Case (c):} If $\gamma$ is a face of a maximal simplex of type $\sigma=\{v,v^{i_0,j_0},v^{i_0,k_0},v^{j_0,k_0}\}$ for some vertex $v$ and $i_0,j_0,k_o\in [n]^{\pm}$, then the possible types of $2$-simplices in $\sigma$ are $\{v,v^{i_0,j_0},v^{i_0,k_0}\}$, $\{v,v^{i_0,j_0},v^{j_0,k_0}\}$, $\{v,v^{i_0,k_0},v^{j_0,k_0}\}$ and $\{v^{i_0,j_0},v^{j_0,k_0},v^{i_0,k_0}\}$. Observe that $\{v,v^{i_0,j_0},v^{i_0,k_0}\}\subseteq N[v^{i_0}]$, $\{v,v^{i_0,j_0},v^{j_0,k_0}\} \subseteq N[v^{j_0}]$, $\{v,v^{i_0,k_0},v^{j_0,k_0}\}\subseteq N[v^{k_0}]$ and $\{v^{i_0,j_0},v^{j_0,k_0},v^{i_0,k_0}\}\subseteq N[v^{i_0,j_0,k_0}]$. 
	Thus, from case $(a)$ above, $\gamma$ is not a critical cell.
\end{proof}

\begin{lemma}\label{lemma:critical_4 or more}
	The matching $\mu$ yields no critical cells of dimension $4$ or more in $\cP(\D_m^{n,2})$.
\end{lemma}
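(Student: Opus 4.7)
The plan rests on the sharp classification of maximal simplices in Lemma \ref{thm:maximalsimplex_m_case}. Types $(ii)$ and $(iii)$ each contain only $4$ vertices, so any simplex $\gamma$ with $|\gamma| \geq 5$ must sit inside a type-$(i)$ maximal simplex $N[x] \cap V(\G_m^n)$ for some $x \in V(\Z^n)$. An elementary count of common neighbors in the lattice shows $|N[x] \cap N[x']| \leq 2$ whenever $x \neq x'$, so such an $x$ is uniquely determined by $\gamma$. This is the geometric framework I would set up first.

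The key technical ingredient is a rigidity claim that I would state and prove as a sublemma: if $\tau \subseteq N[x]$ and $|\tau| \geq 4$, then any vertex $w \in V(\G_m^n)$ with $\tau \cup \{w\} \in \D_m^{n,2}$ must itself lie in $N[x]$. The proof is a short case analysis on the vector $w - x$ when $w \notin N[x]$: the cases $w = x^{p,q}$, $w = x^{[l;\pm 2^+]}$, $d(w,x) = 3$ (with $w - x$ being a sum of three distinct unit vectors, a $2{+}1$ split, or $\pm 3e_l$), and $d(w,x) \geq 4$ together cover every possibility, and in each a direct distance computation gives $|\{y \in N[x] : d(w,y) \leq 2\}| \leq 3$, contradicting $|\tau| \geq 4$.

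With the sublemma in hand, set $z = \min(\gamma)$ under $\prec$ and split into two cases. In Case 1, there exists $w \in N[x] \cap V(\G_m^n)$ with $w \prec z$ and $w \notin \gamma$; since $\gamma \cup \{w\}$ is still a simplex in $N[x]$, Proposition \ref{smallestvertex} immediately gives that $\gamma$ is not critical. In Case 2, $z = \min(N[x] \cap V(\G_m^n))$; here I would show directly that $\mu$ pairs $\gamma$ with $\gamma \setminus \{z\}$ at step $z$. For every $v \prec z$, the choice of $z$ forces $v \notin N[x]$, and the sublemma applied to $\gamma$ and to $\gamma \setminus \{z\}$ (both of size $\geq 4$) then rules out that either $\gamma \cup \{v\}$ or $(\gamma \setminus \{z\}) \cup \{v\}$ is a simplex. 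Consequently at no earlier step does either $\gamma$ or $\gamma \setminus \{z\}$ enter $S_v \cup \mu_v(S_v)$, so both reach $\T_{|z|-1}$ and are paired off by $\mu_z$.

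The main obstacle will be the rigidity sublemma, since its case analysis has to touch every shape a vertex $w \notin N[x]$ can assume. Once it is established, Case 1 reduces to a direct appeal to Proposition \ref{smallestvertex} and Case 2 to a clean bookkeeping argument of the same flavor as those already used in Lemma \ref{lemma:critical_2}.
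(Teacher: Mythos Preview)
Your proposal is correct and follows essentially the same approach as the paper: both arguments observe that a simplex of dimension at least $4$ must lie in some $N[x]$, then either dominate it by a smaller vertex of $N[x]\cap V(\G_m^n)$ (Proposition~\ref{smallestvertex}) or pair it explicitly with the face obtained by deleting its minimum. The only difference is organizational---you abstract the key fact that no $w\notin N[x]$ can extend a $4$-element subset of $N[x]$ into a standalone rigidity sublemma, whereas the paper carries out that case analysis inline within its Case~(ii); the underlying distance computations and the resulting case split are the same.
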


\begin{proof}
	Since  $\mathrm{Card}(N[x]) \leq 2n+1$ for any $x \in V(\mathcal{G}_m^n)$, using  Lemma \ref{thm:maximalsimplex_m_case}, we get that for every $\gamma \in \D_m^{n,2}$, $\dim(\gamma) \leq 2n$. This implies that there is no critical cell of dimension $2n+1$ or higher.
	
	Let $\sigma$ be a simplex in $\D_m^{n,2}$ with $4 \leq \dim(\sigma) \leq 2n$. Then, from Lemma \ref{thm:maximalsimplex_m_case}, we have  $\sigma \subseteq N[v]$ for some $v$. We claim that $\sigma$ is not a critical cell.
	
	\smallskip
	\noindent\textbf{Case (i):} Let $v^{i_0}\notin \sigma$ for every $i_0\in[n]^{-}$. If $v\notin\sigma$, then for any $x\in \sigma$, we have $v\prec x$ and $\sigma\cup \{v\}\subseteq N[v]$. Moreover, since dim$(\sigma) \geq 4$, we have $v \in V(\G_m^n)$. Thus, it follows from Proposition \ref{smallestvertex} that $\sigma$ is not a critical cell. 
	Now, assume that $v\in \sigma$. If there exists a $l_0 \in [n]^{-}$ such that $v^{l_0} \in V(\mathcal{G}_m^n)$, then $v^{l_0}\prec x$ for all $x\in \sigma$ and $\sigma\cup \{v^{l_0}\} \subseteq N[v]$. From Proposition \ref{smallestvertex}, $\sigma$ is not a critical cell. 
	Suppose there exists no $k \in [n]^{-}$ such that $v^{k} \in V(\mathcal{G}_m^n)$.  Then $v = (0, \ldots, 0)$. Clearly $\sigma = \mu(\sigma\setminus \{v\})$. Therefore, $\sigma$ is not a critical cell.

	\smallskip
	
	\noindent\textbf{Case (ii):} Assume that, there exists $i_0\in[n]^{-}$ such that $v^{i_0}\in\sigma$. Let $v^{j_0} \in \sigma$ be the minimal such vertex in $\sigma$, i.e., $v^{j_0} \prec x$ for all $x \in \sigma \setminus \{v^{j_0}\}$, where $j_0 \in [n]^-$.
	
	
	Suppose there exists a vertex $v^{l_0} \in V(\mathcal{G}_m^n)$ with $l_0 < j_0$ such that $\sigma \cup \{v^{l_0}\} \subseteq N[v]$. Then $v^{l_0} \prec x$ for all $x \in \sigma$. Thus, from Proposition \ref{smallestvertex}, $\sigma$ is not a critical cell. So, we assume that there is no $l_0 < j_0$ with $v^{l_0} \in V(\mathcal{G}_m^n)$ and $\sigma \cup \{v^{l_0}\} \subseteq N[v]$. This implies that $v(k) = 0$ for all $k > |j_0|$. We claim that $\sigma = \mu_{v^{j_0}}(\sigma \setminus \{v^{j_0}\})$.
	
	Suppose $\sigma \setminus \{v^{j_0}\} \cup \{x\}$ is a simplex and $x \prec v^{j_0}$ for some $x \in V(\mathcal{G}_m^n)$. Then one of the following holds:
	\begin{itemize}
		\item $x(|j_0|) = v(|j_0|) - 1$, and $x(s) = v(s) - t$ for some $s \in [n]$ with $s < |j_0|$ and $t \geq 1$,
		\item $x(|j_0|) = v(|j_0|) - r$ for some $r \geq 2$.
	\end{itemize}
	
	Let $x(|j_0|) = v(|j_0|) - 1$ and $x(s) = v(s) - t$ for some $s \in [n]$ with $s < |j_0|$ and $t \geq 1$.
Since $\sigma \subseteq N[v]$ and $4 \leq \dim(\sigma) \leq 2n$, there exists a vertex $v^{p} \in \sigma$, where $p \in [n]^{\pm} \setminus \{j_0, s\}$. Suppose $d(v^{p}, x) \leq 2$. Then $t=1$ and $x(|p|) = v(|p|) \pm 1$, according to the sign of $p$. However, since $\mathrm{Card}(\sigma) \geq 5$, there exists a $q \in [n]^{\pm} \setminus \{j_0, p, s\}$ such that $v^{q} \in \sigma$. Then $d(v^{q}, x)$ must be at least $3$.

Similarly, if $x(|j_0|) = v(|j_0|) - r$ for some $r \geq 2$, then $d(v^{p}, x) \leq 2$ for some $v^{p} \in \sigma$ with $p \in [n]^{\pm} \setminus \{j_0\}$ implies that $x(|p|) = v(|p|) \pm 1$, according to the sign of $p$, and $r = 2$. However, since $\mathrm{Card}(\sigma) \geq 5$, there exists a $q \in [n]^{\pm} \setminus \{j_0, p\}$ such that $v^{q} \in \sigma$. Then $d(v^{q}, x)$ must be at least $3$.    Hence, for any $x \prec v^{j_0}$, $\sigma \setminus \{v^{j_0}\} \cup \{x\}$ is not a simplex.
	
	Thus, both $\sigma $ and $\sigma \setminus \{v^{i_0}\}$, do not belong to  $S_x \cup \mu_x(S_x)$ for all $x \prec v^{i_0}$. Therefore, by definition, $\sigma = \mu_{v^{j_0}}(\sigma \setminus \{v^{j_0}\}) = \mu(\sigma \setminus \{v^{j_0}\})$. Hence, $\sigma$ is not a critical cell. This completes the proof. 
\end{proof}

\begin{lemma}\label{lemma:3_critical_cells}
	Let $m\geq 3$. The matching $\mu$ yields at least $(m-2)^3$ critical cells of dimension $3$ in $\cP(\D_m^{3,2})$.
\end{lemma}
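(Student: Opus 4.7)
The plan is to exhibit an explicit family of $(m-2)^3$ distinct critical $3$-simplices, indexed by $v = (a,b,c)$ with $a,b,c \in \{1, \dots, m-2\}$. Namely, I would set
$$
\sigma_v \;=\; \{v,\; v^1,\; v^2,\; v^{1,2}\},
$$
a maximal $3$-simplex of type (ii) in the classification of Lemma~\ref{thm:maximalsimplex_m_case}. Since $v$ is the anti-lex minimum of $\sigma_v$, the assignment $v \mapsto \sigma_v$ is injective, yielding $(m-2)^3$ distinct candidates. Because $\sigma_v$ is maximal, it cannot be matched upward by $\mu$. Criticality therefore reduces to showing that for every $w \in \sigma_v$, the $2$-face $\sigma_v \setminus \{w\}$ has been matched strictly before stage $w$ in the anti-lex order $\prec$.

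To handle this, for each $w \in \sigma_v$ I propose an explicit \emph{partner} vertex $x_w$ obtained by shifting the third coordinate down by $1$ off one corner of the ``unit square'' $\sigma_v$: take $x_v = (a+1, b+1, c-1)$, $x_{v^1} = (a, b+1, c-1)$, $x_{v^2} = (a+1, b, c-1)$, and $x_{v^{1,2}} = (a, b, c-1)$. Direct distance computations confirm that each $\tau_w := (\sigma_v \setminus \{w\}) \cup \{x_w\}$ is a $3$-simplex of $\D_m^{3,2}$, that each $x_w$ lies in $\G_m^3$ (using $c \geq 1$), and that $x_w \prec w$ (each $x_w$ has a strictly smaller third coordinate than the corresponding $w$). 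The claim is then that $\mu$ matches $\sigma_v \setminus \{w\}$ with $\tau_w$ at stage $x_w$ for each $w$, which prevents $\sigma_v$ from being matched downward at stage $w$; combined with maximality, this makes $\sigma_v$ critical.

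The main obstacle is verifying that each of the four intended matches actually takes place, i.e.,~that both $\sigma_v \setminus \{w\}$ and $\tau_w$ are still in $\T_{x_w - 1}$ (unmatched at every earlier stage $y \prec x_w$). This is done inductively along $\prec$. At each potential earlier stage $y \prec x_w$, one enumerates all $z$ making either $(\sigma_v \setminus \{w\}) \cup \{z\}$ or $\tau_w \cup \{z\}$ a simplex of $\D_m^{3,2}$ and checks, using the distance-bound techniques that drive Lemmas~\ref{lemma:critical_1}--\ref{lemma:critical_4 or more}, that no such $z$ is $\prec x_w$: the admissible extensions are forced to have a third coordinate that is $\geq c$, i.e.,~$\succeq x_w$. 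The buffer $1 \le a,b,c \le m-2$ ensures all intermediate and partner vertices live inside $\G_m^3$ so the analysis goes through uniformly. Finally, since the partner pairs $\{\sigma_v \setminus \{w\}, \tau_w\}$ for distinct $(v,w)$ are pairwise disjoint as simplex pairs (each $\tau_w$ straddles two consecutive third-coordinate levels in a unique way), the intended matches do not conflict across different $v$, and we obtain the required $(m-2)^3$ critical $3$-cells.
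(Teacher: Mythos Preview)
Your family of critical $3$-cells and your partner vertices $x_w$ coincide with the paper's (under the reparametrization $k_1=a+1$, $k_2=b+1$, $k_3=c$), so the approach is essentially the same. The one place where you do more work than necessary is the verification step you flag as the ``main obstacle'': you propose to show that $\mu$ matches $\sigma_v\setminus\{w\}$ \emph{specifically} with $\tau_w$ at stage $x_w$, which forces you to also check that $\tau_w$ survives unmatched through all earlier stages. The paper avoids this entirely. Since each $x_w$ has third coordinate $c-1$, it is anti-lex smaller than \emph{every} vertex of $\sigma_v$; hence Proposition~\ref{smallestvertex}, applied to the face $\sigma_v\setminus\{w\}$, already guarantees that this face is matched at some stage $\preceq x_w\prec w$, so $\sigma_v\setminus\{w\}\notin\T_{w-1}$ and therefore $\sigma_v\neq\mu_w(\sigma_v\setminus\{w\})$. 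No enumeration of extensions of $\tau_w$ is needed, and your final paragraph about ``conflicts across different $v$'' is superfluous: $\mu$ is a fixed global matching, so criticality of each $\sigma_v$ is a standalone property, not something that can be disrupted by how other faces get matched.
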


\begin{proof} 
	Let $(k_1,k_2,k_3)\in \mathcal{G}_m^3$ be  such that $k_1,k_2,k_3\geq 2$.
	Then $\sigma = \{(k_1,k_2,k_3), (k_1,$ $k_2-1,k_3), (k_1-1,k_2,k_3), (k_1-1,k_2-1,k_3)\}$ is a simplex of dimension three in $\D_m^{3,2}$. We show that $\sigma$ is a critical cell. 
	From Lemma \ref{thm:maximalsimplex_m_case} $(ii)$, $\sigma$ is a maximal simplex in $\D_m^{3,2}$, and thus $\sigma \cup \{x\} \notin \D_m^{3,2}$ for any $x \notin \sigma$. Thus, the only possibility for $\sigma$ to not be a critical cell is that  $\sigma = \mu(\sigma \setminus \{v\})$ for some vertex $v \in \sigma$.  We now have the following cases:

    \smallskip
    
	\noindent\textbf{Case (i):} $v = (k_1-1, k_2-1, k_3)$. In this case, we find that $\sigma \setminus \{(k_1-1,k_2-1,k_3)\} \cup \{(k_1,k_2,k_3-1)\}$ is a simplex, and $(k_1,k_2,k_3-1) < y$ for all $y \in \sigma$.
    
	\smallskip

	\noindent
	\textbf{Case (ii):} $v = (k_1-1,k_2,k_3)$. In this case,  $\sigma \setminus \{(k_1-1,k_2,k_3)\} \cup \{(k_1,k_2-1,k_3-1)\}$ is a simplex, and $(k_1,k_2-1,k_3-1) < y$ for all $y \in \sigma$.

    \smallskip

    \noindent
	\textbf{Case (iii):} $v = (k_1,k_2-1,k_3)$. In this case,  $\sigma \setminus \{(k_1,k_2-1,k_3)\} \cup \{(k_1-1,k_2,k_3-1)\}$ is a simplex, and $(k_1-1,k_2,k_3-1) < y$ for all $y \in \sigma$.

  \smallskip

	\noindent
	\textbf{Case (iv):} $v = (k_1,k_2,k_3)$. In this case,  $\sigma \setminus \{(k_1,k_2,k_3)\} \cup \{(k_1-1,k_2-1,k_3-1)\}$ is a simplex, and $(k_1-1,k_2-1,k_3-1) < y$ for all $y \in \sigma$.
	
	\smallskip
    
	Therefore, there is no $v \in \sigma$ such that $\sigma = \mu_v(\sigma \setminus \{v\})$. Hence, $\sigma$ is a critical cell. Since number of such $3$-tuples in $V(\mathcal{G}_m^3)$ is  $(m-2)^3$, the result follows.
\end{proof}

\begin{proposition}\label{prop:retract}
	For each $n \geq 3$, there exists a retraction  $r: \D_m^{n, 2} \to \D_m^{3, 2}$.  
\end{proposition}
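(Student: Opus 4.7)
The plan is to exhibit the desired retraction as a simplicial map induced by coordinate projection. The natural embedding of $\Delta_m^{3,2}$ into $\Delta_m^{n,2}$ is on the vertex level by padding with zeros: $(x_1,x_2,x_3) \mapsto (x_1,x_2,x_3,0,\ldots,0)$. So the obvious candidate retraction is projection onto the first three coordinates, namely
$$r(x_1,x_2,\ldots,x_n) = (x_1,x_2,x_3),$$
and the whole proof should just verify that (a) this extends to a well-defined simplicial map on $\Delta_m^{n,2}$ and (b) it restricts to the identity on the subcomplex $\Delta_m^{3,2}$.

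For (a), the key point is that coordinate projection is $1$-Lipschitz with respect to the Manhattan metric, since for any $x,y \in V(\mathcal{G}_m^n)$,
$$d(r(x),r(y)) = \sum_{i=1}^3 |x_i - y_i| \leq \sum_{i=1}^n |x_i - y_i| = d(x,y).$$
Therefore if $\sigma = \{v_0,\ldots,v_k\}$ is a simplex of $\Delta_m^{n,2}$, meaning $d(v_i,v_j) \leq 2$ for all $i,j$, then $d(r(v_i),r(v_j)) \leq 2$ for all $i,j$, so $\{r(v_0),\ldots,r(v_k)\}$ is a simplex of $\Delta_m^{3,2}$ (recalling that Vietoris--Rips complexes are flag). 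Hence $r$ extends to a simplicial map $r:\Delta_m^{n,2} \to \Delta_m^{3,2}$.

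For (b), the verification is immediate from the formula: for any vertex $(x_1,x_2,x_3) \in V(\mathcal{G}_m^3)$ embedded in $V(\mathcal{G}_m^n)$ as $(x_1,x_2,x_3,0,\ldots,0)$, one has $r(x_1,x_2,x_3,0,\ldots,0) = (x_1,x_2,x_3)$. Thus $r \circ i = \mathrm{id}_{\Delta_m^{3,2}}$, where $i: \Delta_m^{3,2} \hookrightarrow \Delta_m^{n,2}$ is the inclusion, and $r$ is a simplicial retraction. There is no real obstacle here; the only content is the observation that Manhattan distance is monotone under dropping coordinates, which in turn is the reason the natural inclusion of $\Delta_m^{3,2}$ fits so nicely inside $\Delta_m^{n,2}$.
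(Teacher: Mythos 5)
Your proposal is correct and follows essentially the same route as the paper: both define the retraction via projection onto the first three coordinates, observe that this map is $1$-Lipschitz for the Manhattan metric (hence simplicial on the flag complexes), and note that it restricts to the identity on $\D_m^{3,2}$. Your write-up is in fact slightly more explicit than the paper's about verifying $r \circ i = \mathrm{id}$.
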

\begin{proof}
	
	Define $r_{1}: V(\G_m^n) \to  V(\G_m^3)$ by $r_1((v_1, \ldots, v_n)) = (v_1, v_2, v_3)$. 
	We  extend the map $r_{1}$ to $r : \D_m^{n, 2} \to \D_m^{3, 2}$ by 
	$r(\sigma) := \{r_1(v) : v \in \sigma\}$ for all $\sigma \in \D_m^{n, 2}$. Since $d(r_1(v), r_1(w)) \leq d(v, w)$ for all $v , w \in V(\G_m^n)$, the map $r_1$ is a surjective simplicial map.  Hence $r$ is a retraction map. This completes the proof. 
\end{proof}

\begin{theorem}\label{theorem:finitecase}
	
	For $m \geq 3$, $\D_m^{n,2} \simeq \bigvee^{\nu_m} \Sp^3$, where $\nu_m \geq (m-2)^3$.  
\end{theorem}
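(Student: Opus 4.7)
The plan is to assemble the critical-cell count from Lemmas \ref{lemma:critical_1}--\ref{lemma:3_critical_cells} via Forman's discrete Morse theory, and then transfer the lower bound on $3$-dimensional critical cells from the case $n=3$ to all $n \geq 3$ using the retraction from Proposition \ref{prop:retract}.

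First I would combine Lemmas \ref{lemma:critical_1}, \ref{lemma:critical_2}, and \ref{lemma:critical_4 or more}, which together ensure that for every $n \geq 3$ the only critical cells of $\mu$ on $\cP(\D_m^{n,2})$ occur in dimension exactly $3$; moreover $\emptyset$ is paired with the vertex $(0,\ldots,0)$ under $\mu_1$ and so is not critical. Applying Theorem \ref{theorem:Morse_theory} together with Remark \ref{remark:wedge of spheres} therefore yields
\[
\D_m^{n,2} \;\simeq\; \bigvee\nolimits^{\nu_m} \Sp^3,
\]
where $\nu_m$ denotes the number of critical $3$-cells of $\mu$ (a quantity that depends on both $m$ and $n$). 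In particular $H_3(\D_m^{n,2};\Z) \cong \Z^{\nu_m}$ and all other reduced homology groups of $\D_m^{n,2}$ vanish.

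Next I would establish $\nu_m \geq (m-2)^3$. For $n=3$ this is immediate from Lemma \ref{lemma:3_critical_cells}. For $n \geq 4$, Proposition \ref{prop:retract} provides a simplicial retraction $r : \D_m^{n,2} \to \D_m^{3,2}$, and the natural simplicial inclusion $\iota : \D_m^{3,2} \hookrightarrow \D_m^{n,2}$ defined on vertices by $(v_1,v_2,v_3) \mapsto (v_1,v_2,v_3,0,\ldots,0)$ is a section of $r$; $\iota$ is simplicial because the Manhattan distance is preserved on its image, and $r \circ \iota = \mathrm{id}_{\D_m^{3,2}}$ by construction of $r$. Applying $H_3(-;\Z)$ gives $H_3(r) \circ H_3(\iota) = \mathrm{id}$, so $H_3(\D_m^{3,2};\Z)$ is a direct summand of $H_3(\D_m^{n,2};\Z)$. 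Since both groups are free abelian (both complexes being wedges of $3$-spheres), taking ranks gives
\[
\nu_m \;=\; \mathrm{rk}\,H_3(\D_m^{n,2};\Z) \;\geq\; \mathrm{rk}\,H_3(\D_m^{3,2};\Z) \;\geq\; (m-2)^3,
\]
as required.

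I do not expect a substantive obstacle in this step: the analytical work of counting and classifying critical cells has already been carried out in Lemmas \ref{lemma:critical_1}--\ref{lemma:3_critical_cells}, and the role of the theorem is essentially to combine them with the explicit retract of Proposition \ref{prop:retract}. The only minor point to verify is that $\iota$ is a simplicial section of $r$, which is immediate since $d(\iota(u),\iota(v)) = d(u,v)$ for all $u,v \in V(\G_m^3)$ under the Manhattan metric.
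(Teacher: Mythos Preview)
Your proposal is correct and follows essentially the same route as the paper: both combine Lemmas \ref{lemma:critical_1}, \ref{lemma:critical_2}, \ref{lemma:critical_4 or more} with Remark \ref{remark:wedge of spheres} to get a wedge of $3$-spheres, and then use Proposition \ref{prop:retract} together with Lemma \ref{lemma:3_critical_cells} to bound the number of spheres from below. Your explicit construction of the simplicial section $\iota$ and the direct-summand argument on $H_3$ make precise what the paper leaves implicit in its appeal to the retraction.
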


\begin{proof}
	Using Theorem \ref{theorem:Morse_theory}, and  Lemmas \ref{lemma:critical_1}, \ref{lemma:critical_2}, and  \ref{lemma:critical_4 or more},  we obtain $\tilde{H}_i(\D_m^{n,2}; \Z) = 0$ if  $i \neq 3$. Further, using Proposition \ref{prop:retract} and Lemma \ref{lemma:3_critical_cells}, we have $\tilde{H}_3(\D_m^{n,2}, \Z) \neq  0$.  From Remark \ref{remark:wedge of spheres}, we conclude that $\D_m^{n,2} \simeq \bigvee^{\nu_m} \Sp^3$, where $\nu_m$ is the number of $3$-dimensional  critical cells corresponding to the matching $\mu$ defined above.  Then   the rank of $\tilde{H}_3(\D_m^{n,2}; \Z)$ is $\nu_m$.  Now, the result follows from  Proposition \ref{prop:retract} and Lemma \ref{lemma:3_critical_cells}. 
\end{proof}

\begin{theorem}\label{theorem:simplyconnected_inside}
	The complex $\vr{\Z^n}{r}$ is simply connected for all $r \geq 2$. 
\end{theorem}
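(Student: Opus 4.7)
The plan is to reduce the assertion for arbitrary $r \geq 2$ to the case $r=2$, which is already controlled by our earlier analysis, and then bridge the two using a simple edge-subdivision argument inside $\vr{\Z^n}{r}$.

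First I would verify that $\vr{\Z^n}{2}$ itself is simply connected for every $n$. For $n=1$ this is classical (and also follows from Virk's theorem); for $n=2$ it follows from Theorem~\ref{maintheorem} applied with $r=n=2$, which gives $\vr{\Z^2}{2}$ contractible. For $n \geq 3$, Theorem~\ref{thm:r=2_Intro} shows that $\vr{\Z^n}{2}$ is homotopy equivalent to a wedge of $3$-spheres, and any such wedge is $2$-connected, so in particular $\pi_1(\vr{\Z^n}{2}) = 0$.

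The main new ingredient is the following edge-subdivision observation. Let $\{x,y\}$ be an edge of $\vr{\Z^n}{r}$ with $d(x,y) = k \leq r$, and fix a Manhattan-geodesic $x = z_0, z_1, \ldots, z_k = y$ in which consecutive vertices differ by a single standard basis vector. For any $0 \leq i < j \leq k$ one has $d(z_i, z_j) = j-i \leq k \leq r$, so the whole set $\{z_0, \ldots, z_k\}$ spans a simplex of $\vr{\Z^n}{r}$. Inside this contractible simplex, the edge $\{x,y\}$ is path-homotopic (rel endpoints) to the polygonal path $x \to z_1 \to \cdots \to z_{k-1} \to y$, which consists of unit-length edges only, and therefore lies in the subcomplex $\vr{\Z^n}{2}$.

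Applying this subdivision edge-by-edge converts an arbitrary simplicial loop $\gamma$ in the $1$-skeleton of $\vr{\Z^n}{r}$ into a loop $\gamma'$ in the $1$-skeleton of $\vr{\Z^n}{2}$ that is homotopic to $\gamma$ inside $\vr{\Z^n}{r}$. Since $\vr{\Z^n}{2}$ is simply connected, $\gamma'$ bounds a disk there, and the same disk witnesses nullhomotopy in the larger complex $\vr{\Z^n}{r}$. This gives $\pi_1(\vr{\Z^n}{r}) = 0$. I do not foresee any genuine obstacle here: the simplex spanned by the geodesic is exactly the combinatorial device that makes the edge replacement work, and everything else is formal use of the containment $\vr{\Z^n}{2} \subseteq \vr{\Z^n}{r}$ together with the previously established homotopy type at scale $r=2$.
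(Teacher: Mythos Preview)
Your edge-subdivision strategy is exactly the paper's approach: homotope an arbitrary simplicial loop in $\vr{\Z^n}{r}$ to one whose edges all have length $1$ (hence lie in $\vr{\Z^n}{2}$), and then invoke the scale-$2$ analysis. Your observation that the entire Manhattan geodesic $\{z_0,\ldots,z_k\}$ spans a simplex of $\vr{\Z^n}{r}$ (since $d(z_i,z_j)=j-i\le k\le r$) makes the replacement homotopy more explicit than the paper's terse treatment.

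There is, however, a circularity in your appeal to Theorem~\ref{thm:r=2_Intro} for $n\geq 3$. In the paper's logical order, Theorem~\ref{thm:r=2_Intro} (that is, Theorem~\ref{thm:(Z^n;2)}) is proved \emph{after} Theorem~\ref{theorem:simplyconnected_inside}, and its proof \emph{uses} the simple connectivity of $\vr{\Z^n}{2}$: knowing only the homology of $\vr{\Z^n}{2}$ does not determine its homotopy type unless one already knows $\pi_1=0$ (this is precisely the Moore-space uniqueness step). So you cannot cite it here. The paper avoids this by passing to the finite case: the subdivided loop $\gamma'$ has finitely many vertices and hence sits inside some finite box $\Delta_m^{n,2}=\vr{\{0,\ldots,m\}^n}{2}$, and Theorem~\ref{theorem:finitecase} (proved purely by discrete Morse theory, with no appeal to $\pi_1$) shows that this finite complex is a wedge of $3$-spheres, hence simply connected. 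Replace your citation of Theorem~\ref{thm:r=2_Intro} by Theorem~\ref{theorem:finitecase} together with this compactness step, and your proof becomes correct and essentially identical to the paper's.
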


\begin{proof}
	Let $\sigma: S^1 \to \vr{\Z^n}{r}$ be  a  closed path in $\Delta_n$. Since $\Delta_n$ is a simplicial complex, $\sigma$ is  homotopic to a closed path $c= x_1, x_2,\ldots,  x_1$, where  $\{x_i, x_{i+1}\} \in \Delta_n$ for ecch $i$. If for some $i$, $d(x_i, x_{i+1}) =  k \geq 2$, then there exist vertices  $ z_1,  \ldots, z_{k-1} \in \Z^n$ such  that $d(x_i, z_1) = 1 = d(z_1, z_2) = \ldots =  d(z_{k-1}, x_{i+1})$.   Clearly, the path $c_1 = x_1, \ldots, x_i, z_1, \ldots, z_{k-1}, x_{i+1}, \ldots,  x_1$ is homotopic to $\delta$.  Hence, by inserting a new vertices between each such pair of vertices of distance $\geq 2$, we can assume that $d(x_i, x_{i+1}) = 1 $ for all $i$. Using the compactness of $S^1$, we see that $c_1$ consists of finitely many edges of $\Z^n$. Hence $c_1$ is a closed edge path in $\D_m^{n,2}$ for some sufficiently large $m$.  Result follows from Theorem \ref{theorem:finitecase}.
	\end{proof}

\begin{theorem}\label{thm:(Z^n;2)}
	For  $n \geq 3$, $\vr{\Z^n}{2}$ is homotopy equivalent to the wedge sum of countably infinite copies of $\Sp^3$'s.
\end{theorem}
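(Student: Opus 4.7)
The plan is to realize $\vr{\Z^n}{2}$ as a directed colimit of finite subcomplexes, each of which is already known to be a wedge of $3$-spheres by Theorem \ref{theorem:finitecase}, and then promote this to a global homotopy equivalence using Whitehead's theorem together with the simple-connectedness from Theorem \ref{theorem:simplyconnected_inside}.

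For each $m\geq 2$ set $Y_m=\vr{\{-m,-m+1,\dots,m\}^n}{2}$. Translating $\{-m,\dots,m\}$ by $m$ in each coordinate exhibits an isometry with $\{0,1,\dots,2m\}^n$, so $Y_m\cong \D_{2m}^{n,2}$, and Theorem \ref{theorem:finitecase} gives $Y_m\simeq \bigvee^{\nu_{2m}}\Sp^3$ with $\nu_{2m}\geq (2m-2)^3$. Any finite simplicial subcomplex of $\vr{\Z^n}{2}$ sits inside some $Y_m$ (its vertex set is finite), so $\vr{\Z^n}{2}=\bigcup_m Y_m$ as a CW complex, hence as a topological space.

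The key step is that the inclusions $Y_m\hookrightarrow Y_{m+1}$ are split. Define $\rho_m:\Z^n\to\{-m,\dots,m\}^n$ coordinatewise by $x\mapsto\max(-m,\min(m,x))$. Each coordinate map is $1$-Lipschitz on $\Z$, so $\rho_m$ is $1$-Lipschitz in the Manhattan metric and therefore extends to a simplicial retraction $\tilde\rho_m:\vr{\Z^n}{2}\to Y_m$; restriction gives retractions $Y_{m+1}\to Y_m$. Since singular homology commutes with direct limits of CW complexes,
\[
H_i(\vr{\Z^n}{2})\;=\;\varinjlim_m H_i(Y_m),
\]
and the retractions force each transition $H_\ast(Y_m)\to H_\ast(Y_{m+1})$ to be a split injection of abelian groups. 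In particular $H_i(\vr{\Z^n}{2})=0$ for $i\notin\{0,3\}$, and $H_3(\vr{\Z^n}{2})$ is the direct limit of finite-rank free abelian groups under split injections, hence itself free abelian. Its rank is at most countable (a countable union of finitely generated groups) and at least $\sup_m\nu_{2m}\geq\sup_m(2m-2)^3=\infty$, so $H_3(\vr{\Z^n}{2})$ is free abelian of countably infinite rank.

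To finish, combine simple-connectedness (Theorem \ref{theorem:simplyconnected_inside}) with Hurewicz to identify a countable basis of $H_3$ with elements of $\pi_3(\vr{\Z^n}{2})$, and assemble the representing maps into $\bigvee_{i\in\N}\Sp^3\to\vr{\Z^n}{2}$. This map induces an isomorphism on all integral homology, so Whitehead's theorem (applied to simply connected CW complexes) upgrades it to a homotopy equivalence. The main obstacle here is the freeness of $H_3(\vr{\Z^n}{2})$: without the coordinatewise truncation retractions $\rho_m$ the direct limit could a priori be a non-free abelian group, so verifying that $\rho_m$ is genuinely $1$-Lipschitz (and therefore simplicial on the Vietoris--Rips complex) is the crucial technical ingredient.
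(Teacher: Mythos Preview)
Your argument is correct and essentially parallel to the paper's: both exhaust $\vr{\Z^n}{2}$ by finite boxes, use Theorem~\ref{theorem:finitecase} to compute the homology of the colimit, and then combine simple-connectedness (Theorem~\ref{theorem:simplyconnected_inside}) with a Whitehead-type argument---the paper packages this last step as uniqueness of simply connected Moore spaces rather than an explicit Hurewicz--Whitehead construction. The one place you are more careful is in justifying that $H_3$ of the colimit is \emph{free} abelian: the paper asserts this directly from Theorem~\ref{theorem:finitecase}, whereas your coordinatewise truncation retractions $\rho_m$ make the transition maps split injections and thereby force the direct limit of the finite-rank free groups to remain free.
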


\begin{proof}
	
	Since any homology class of  $\vr{\Z^n}{2}$ lies in $\D_m^{n,2}$  for sufficiently large $m$, we conclude that $\tilde{H}_i(\vr{\Z^n}{2}, \Z) \neq 0$ if and only if $i = 3$. Further, using Theorem \ref{theorem:finitecase}, we see that $\tilde{H}_3(\vr{\Z^n}{2}, \Z)$ is free abelian and is of countably infinite rank. Suppose rank of $\tilde{H}_3(\vr{\Z^n}{2}, \Z)$ is $\nu$. Then $\tilde{H}_3(\vr{\Z^n}{2}, \Z) \cong \Z^{\nu}$.

	For an abelian group $G$ and a positive integer $k$, let  $M(G, k)$ denote a Moore  space, {\it i.e., } $\tilde{H}_k(M(G, k);\Z) \cong G$ and $\tilde{H}_i(M(G, k);\Z) =0 $ for $i \neq k$ (for more details on Moore space, see \cite{Hatcher02}).  Then  $\vr{\Z^n}{2}$ is a Moore space  $M(\Z^{\nu}, 3)$. 
	Since $\vr{\Z^n}{2}$ is simply connected (Theorem \ref{theorem:simplyconnected_inside}), and the fact that $\bigvee^{\nu} \Sp^3$ is a $M(\Z^{\nu}, 3)$, by uniqueness (upto homotopy equivalence for simply connected CW complexes) of Moore space, we conlcude that   $\vr{\Z^n}{2}\simeq \bigvee^{\nu} \Sp^3$.
	\end{proof}

	

	



\section{Conclusion and Future Directions}\label{Conclusion}

In this article, we investigated the Vietoris-Rips complex of the Cayley graph (with respect to the standard generator) of the abelian group $\Z^n$ with the word metric. Building on earlier work, we confirmed Zaremsky’s conjecture (Conjecture \ref{conj:main}), for $n \leq 5$ and established the contractibility of $\vr{\Z^6}{r}$ for $r \geq 10$. Using discrete Morse theory, we further characterized the homotopy type of $\vr{\Z^n}{2}$ for $n \geq 3$, proving that it is homotopy equivalent to the wedge sum of countably infinite copies of  $3$-spheres.  

These results contribute to the growing understanding of Vietoris-Rips complexes beyond hyperbolic groups, highlighting the rich combinatorial and topological structure of $\Z^n$ under the word metric. Our findings not only extend the validity of Zaremsky’s conjecture but also provide new insights into the homotopy types of Vietoris-Rips complexes arising  from discrete spaces.  

Several questions remain open, particularly the Conjecture $\ref{conj:main}$, which is still open for $n\geq 7$. We believe that the Lemmas \ref{sumofthree},  \ref{sumoffour}, and \ref{sumoffour2} can be generalized for more coordinates. For a fix $m > 0$, let $\G_m^n$, $\Delta_m^{n, r}$, and  $\Gamma_{n}^{\alpha,r}$ be the same as defined in  the beginning of Section \ref{sec:vr1}. Then, we propose the following conjecture.

\begin{conj}\label{conj:future}
	Let $r\geq n\geq 2$ . Then $\Gamma_n^{\alpha, r}$ is homotopy equivalent to the induced subcomplex $\Delta$ of $\Gamma_n^{\alpha, r}$, where every $x \in V(\Delta)$ satisfies the following: (i) $|x_i|< \f $ for all $i \in [n]$ and (ii) for any set $S\subseteq [n]$ such that $\text{Card}(S)=n-2$, $\sum_{i\in S}|x_i|\leq r-2$.
\end{conj}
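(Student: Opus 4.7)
The plan is to follow the domination-based reduction strategy developed in Lemmas~\ref{r/2}--\ref{sumoffour2}, extending the pair, triple, and quadruple sum bounds to arbitrary $(n-2)$-tuples of coordinates. Starting from the conclusion of Lemma~\ref{sumoffour2} (or the analogous induced subcomplex produced at the previous stage), I would iteratively remove every remaining vertex $x$ for which some $S\subseteq[n]$ with $\mathrm{Card}(S)=n-2$ satisfies $\sum_{i\in S}|x_i|\geq r-1$, without changing the homotopy type. Since $d(\mathbf{0},x)\leq r$ forces $\sum_{j\notin S}|x_j|\leq 1$ for such a violating $x$, there are only two subcases to treat: either $x_j=0$ for both indices $j\notin S$ (with $\sum_{i\in S}|x_i|\in\{r-1,r\}$), or exactly one $j\notin S$ satisfies $|x_j|=1$ (forcing $\sum_{i\in S}|x_i|=r-1$). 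These subcases should be handled sequentially, in direct analogy with Steps~I and~II of Lemma~\ref{sumoffour2}.

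For each violating $x$ in the first subcase, the natural candidate dominator is $\lambda:=\lambda^{[x;T]}$, where $T:=\{i\in S:x_i\neq 0\}$; the pair bound $|x_i|+|x_k|\leq\cil$ combined with $|x_i|<\f$ forces $|T|\geq 3$, and the facts $\lambda\prec x$ and $\lambda\in V(\D)$ can be verified exactly as in Lemma~\ref{sumoffour}. For the key containment $\n[x,\D]\subseteq \n[\lambda,\D]$, fix $y\in \n[x,\D]$ and partition $T$ into the \emph{bad set} $B:=\{i\in T:\mathrm{sgn}(y_ix_i)=1 \text{ and } |y_i|\geq|x_i|\}$ and its complement $G:=T\setminus B$. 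Because $\lambda_i=x_i$ for $i\notin T$, a direct calculation gives the identity $d(y,\lambda)=d(y,x)+|B|-|G|=d(y,x)+2|B|-|T|$, so when $|B|\leq |T|/2$ the bound $d(y,\lambda)\leq r$ is immediate from $d(y,x)\leq r$.

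The hard part will be the case $|B|>|T|/2$, where the "bad" increments outnumber the "good" savings. Here one must exploit the tightness $\sum_{i\in S}|x_i|\in\{r-1,r\}$ together with the inequality $\sum_{i\in B}|y_i|\geq\sum_{i\in B}|x_i|$, which forces $\sum_{i\notin B}|y_i|\leq r-\sum_{i\in B}|x_i|$, and the pair bound $|y_i|+|y_j|\leq\cil$, to conclude that $y$ must agree with $x$ on enough coordinates of $B$ to keep $d(y,\lambda)\leq r$. A parity obstruction analogous to the one in Case~1.2 of Lemma~\ref{sumoffour2} should appear and likely impose a lower bound on $r$; pushing this lower bound all the way down to $r\geq n$ --- rather than something like $r\geq c\cdot n$ for a nontrivial constant $c$ --- is the real crux. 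The second subcase, with exactly one $|x_j|=1$ outside $S$, would be treated after the first by replacing $\lambda^{[x;T]}$ with $\lambda^{[x;T']}$ for a suitable proper subset $T'\subsetneq T$ of size $|T|-1$, mirroring the transition from Step~I to Step~II of Lemma~\ref{sumoffour2}. Once both subcases are dispatched, Proposition~\ref{flag} applied successively yields $\Gamma_n^{\alpha,r}\simeq\D$; the principal combinatorial difficulty is that, for large $n$, both the number of candidate subsets $S$ and the possible sizes of $B$ grow, so the analysis must be organized uniformly in $|B|$ rather than enumerated.
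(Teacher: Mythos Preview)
This statement is Conjecture~\ref{conj:future}, not a theorem: the paper does \emph{not} prove it. It is presented in Section~\ref{Conclusion} as the natural extension of Lemmas~\ref{sumofthree}, \ref{sumoffour}, and \ref{sumoffour2} to $(n-2)$-tuples, and the paper only shows that \emph{if} this conjecture holds then Conjecture~\ref{conj:main} follows. So there is no proof in the paper for you to compare against.

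Your proposal is a plan, not a proof, and you are candid about this yourself. The overall strategy --- extending the domination argument from pairs/triples/quadruples to $(n-2)$-tuples, choosing the dominator $\lambda^{[x;T]}$, and splitting $T$ into ``good'' and ``bad'' indices relative to a neighbor $y$ --- is exactly the template the paper's lemmas establish, and the identity $d(y,\lambda)=d(y,x)+2|B|-|T|$ is correct and useful. But the decisive case $|B|>|T|/2$ is where the argument in Lemmas~\ref{sumoffour} and \ref{sumoffour2} already required the extra hypothesis $r\geq 10$ (when $n=5,6$), and you correctly flag that forcing the bound down to $r\geq n$ for general $n$ is ``the real crux.'' You do not supply an argument for that case; you only assert that the pair bound and the tightness $\sum_{i\in S}|x_i|\in\{r-1,r\}$ ``should'' force enough agreement between $y$ and $x$ on $B$. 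In the paper's own proofs this step produced genuine parity obstructions and numerical constraints (e.g.\ the case analyses ending in ``$r\leq 9$, a contradiction'' in Lemma~\ref{sumoffour2}), and those obstructions are precisely why the authors stopped at $n\leq 6$ and posed the general statement as a conjecture.

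In short: your outline matches the intended approach, but the step you label as the hard part is genuinely open, and nothing in the proposal closes it.
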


Assuming that the above conjecture is true, we can prove Conjecture \ref{conj:main} in the following way.

\noindent\textit{Proof of Conjecture \ref{conj:main}:} Recall that to establish that $\vr{\mathbb{Z}^n}{r}$ is contractible, it is sufficient to prove that $\Gamma_{n}^{\alpha,r}$ is contractible. 
From Theorem \ref{maintheorem}, $\Gamma_2^{\alpha, r}$ is contractible for $r \geq 2$. From Conjecture~\ref{conj:future}, $\Gamma_n^{\alpha, r}$ is homotopy equivalent to the induced subcomplex $\Delta_n$ of $\Gamma_n^{\alpha, r}$, where every $x \in V(\Delta_n)$ satisfies the following: $|x_i| < \f$ for all $i \in [n]$, and for any set $S \subseteq [n]$ with $\text{Card}(S) = n-2$, we have $\sum_{i \in S} |x_i| \leq r-2$.

    \smallskip
	\noindent \textbf{Case 1:} There exists an element in $V(\Delta_n)$ with a positive $n$-th coordinate.

Let $p=(0,0,\ldots,0,1) \in \Z^n$. Then observe that $p \in \Delta_n$.  Suppose there exists an element in $V(\Delta_n)$ with a positive $(n-1)$-th coordinate. Then $q = (0,0,\ldots,0,1,0) \in V(\Delta_n)$. Consider $w = (0,0,\ldots,0,1,1)$. We have $w \succ {\bf{0}}$ and $w \in V(\Delta_n)$. Let $x \in V(\Delta_n)$. 
\begin{itemize}
	\item If $x_n \geq 1$, then $d(x,w) = \sum_{i \in [n-2]} |x_i| + |x_{n-1}-1| + |x_n-1| \leq \sum_{i \in [n-2]} |x_i| + |x_{n-1}| + |x_n| \leq r$.
	\item If $x_n = 0$ and $x_{n-1} \neq 0$, then since $x \succ {\bf{0}}$, we have $x_{n-1} \geq 1$. Thus, $d(x,w) = \sum_{i \in [n-2]} |x_i| + |x_{n-1}-1| + 1 \leq r$.
	\item If $x_{n-1} = x_n = 0$, then $\sum_{i \in [n-2]} |x_i| \leq r-2$. Hence, $d(x,w) = \sum_{i \in [n-2]} |x_i| + 1 + 1 \leq r$.
\end{itemize}

Therefore, $d(x,w)\leq r$ for all $x\in V(\Delta_n)$. Thus, $\D_n$ is a cone with apex  $w$. This implies that $\D_n$ is contractible and hence 
$\Gamma_n^{\alpha, r}$ is contractible.

If there is no element in $V(\Delta_n)$ with a positive $(n-1)$-th coordinate, then for any $y \in V(\Delta_n)$, either $y_n\geq 1$ or $y_{n-1} = y_n = 0$. Since $\sum_{i\in [n-2]} |y_i| \leq r-2$,   we conclude that $d(y,p) \leq r$. Therefore, $\Delta_n$ is a cone with apex  $p$. Hence, $\Delta_n$ is contractible, and thus $\Gamma_n^{\alpha, r}$ is contractible.

\smallskip
\noindent
\textbf{Case 2:} For every $y \in V(\Delta_n)$, $y_n = 0$.

In this case, $\Delta_n$ is homotopy equivalent to an induced subcomplex $\Delta_{n-1}$ of $\Gamma_{n-1}^{\beta,r}$, for some $1 \leq \beta \leq \text{Card}(V(\mathcal{G}_m^{n-1}))$, where every $x \in V(\Delta_{n-1})$ satisfies the following: $|x_i| < \f$ for all $i \in [n-1]$, and for any set $S \subseteq [n-1]$ with $\text{Card}(S) = n-3$, we have $\sum_{i \in S} |x_i| \leq r-2$.

Now, we have the following subcases:

    \smallskip
	\noindent \textbf{Subcase (i):} There exists an element in $V(\Delta_{n-1})$ with a positive $(n-1)$-th coordinate.

Suppose there exists an element in $V(\Delta_{n-1})$ with a positive $(n-2)$-th coordinate. Let $u =  (0,0,\ldots,0, 1) \in V(\Delta_{n-1})$. Consider $v = (0,0,\ldots,0,1,1) \in \Z^{n-1}$. We have $v \succ {\bf{0}}$ and $v \in V(\Delta_{n-1})$.  Using a similar computation as in Case (1), we get that $\D_{n-1}$ is a cone with apex at $v$. Therefore, $\D_{n-1}$ is contractible. Since $\Gamma_n^{\alpha, r}\simeq \D_n\simeq \D_{n-1}$, $\Gamma_n^{\alpha, r}$ is contractible.



If there is no element in $V(\Delta_{n-1})$ with a positive $(n-2)$-th coordinate, then for any $y \in V(\Delta_{n-1})$, either $y_{n-1}  \geq 1$ or $y_{n-2} = y_{n-1} = 0$. Thus, we conclude that $d(y,u) \leq r$ for all $y \in V(\Delta_{n-1})$. Therefore, $\Delta_{n-1}$ is a cone with apex at $u$. Hence, $\Delta_{n-1}$ is contractible, and thus $\Gamma_n^{\alpha, r}$ is contractible.

    \smallskip
	\noindent 
\textbf{Subcase (ii):} For every $z \in V(\Delta_{n-1})$, $z_{n-1} = 0$.

In this subcase, we proceed similarly to  Case~(2) again. We get that $\Delta_{n-1}$ is homotopy equivalent to an induced subcomplex $\Delta_{n-2}$ of $\Gamma_{n-2}^{\gamma,r}$, for some $1 \leq \gamma \leq \text{Card}(V(\mathcal{G}_m^{n-2}))$, where every $x \in V(\Delta_{n-2})$ satisfies the following: $|x_i| < \f$ for all $i \in [n-2]$, and for any set $S \subseteq [n-2]$ with $\text{Card}(S) = n-4$, we have $\sum_{i \in S} |x_i| \leq r-2$.

By repeatedly applying Case~(1) and Case~(2) a finite number of times, we obtain
$$
\Gamma_n^{\alpha, r}\simeq \Delta_n \simeq \Delta_{n-1} \simeq \cdots \simeq \Delta_2,
$$
Where, for every $x \in V(\Delta_2)$, we have $|x_i| < \f$ for all $i \in [2]$. Since $\Delta_2$ is contractible from  Case (i) of the proof of Theorem~\ref{maintheorem}, $\Delta_n$ is contractible, and thus $\Gamma_n^{\alpha, r}$ is contractible. This completes the proof.

 Theorem~\ref{thm:(Z^n;2)} is a  generalization from $\vr{\{0, 1\}^n}{2}$ to $\vr{\mathbb{Z}^n}{2}$. It establishes that while $\vr{\{0, 1\}^n}{2}$ is homotopy equivalent to a wedge sum of finitely many $\Sp^3$'s (\cite{Adamaszek2021}), the complex $\vr{\mathbb{Z}^n}{2}$ is homotopy equivalent to a wedge sum of countably infinite copies of $\Sp^3$'s.

An extension from $\vr{\{0, 1\}^n}{2}$ to $\vr{\{0, 1\}^n}{3}$ was carried out in \cite{Shukla2023, Ziqin2024}. The authors in \cite{Ziqin2024} proved that $\vr{\{0, 1\}^n}{3}$ is homotopy equivalent to a wedge sum of finite copies of $\Sp^4$'s and $\Sp^7$'s. This naturally leads to the following questions.  

\begin{ques} Fix $n\geq 4$. 
	Is $\vr{\mathbb{Z}^n}{3}$ homotopy equivalent to a wedge sum of countably infinite copies of $\Sp^4$'s and $\Sp^7$'s? 
\end{ques}
\begin{ques}
Let $2 < r < n$.  Is $\vr{\Z^n}{r}$ is homotopy equivalent to  wedge sum of spheres ?
\end{ques}

It is known that the inclusion $\{0,1\}^n \hookrightarrow \Z^n$ induces an injective homomorphism 
$\tilde{H}_i(\vr{\{0, 1\}^n}{r}; \Z) \longrightarrow \tilde{H}_i(\vr{\Z^n}{r}; \Z)$,
and $\tilde{H}_i(\vr{\{0, 1\}^n}{r}; \Z) \neq 0$ for $r<n$. 
We conjecture the following:

\begin{conj}\label{conj:homoloyof_lattices_and_cube}
$\tilde{H}_i(\vr{\Z^n}{r}; \Z) \neq 0$ if and only if 
$\tilde{H}_i(\vr{\{0, 1\}^n}{r}; \Z) \neq 0.$

\end{conj}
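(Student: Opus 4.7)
The ``if'' direction is immediate from the injection $\tilde{H}_i(\vr{\{0,1\}^n}{r};\Z) \hookrightarrow \tilde{H}_i(\vr{\Z^n}{r};\Z)$ already noted in the introduction. The real content is the converse: whenever $\tilde{H}_i(\vr{\{0,1\}^n}{r};\Z)=0$, one must show $\tilde{H}_i(\vr{\Z^n}{r};\Z)=0$. For $r\geq n$ both groups are trivial (assuming Conjecture~\ref{conj:main}, whose cubical analogue holds since $\operatorname{diam}(\{0,1\}^n)=n$), so the content is really for $r<n$. My plan is to establish the stronger statement that $\vr{\Z^n}{r}$ is homotopy equivalent to a (countably infinite) wedge of copies of $\vr{\{0,1\}^n}{r}$, modelled directly on the proof of Theorem~\ref{thm:(Z^n;2)} in the $r=2$ case.

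First I would reduce to a finite-window computation: any homology class of $\vr{\Z^n}{r}$ is supported in $\Delta_m^{n,r}=\vr{\mathcal{G}_m^n}{r}$ for some $m$, so it suffices to prove, for each $m$, that $\Delta_m^{n,r}$ is homotopy equivalent to a finite wedge of copies of $\vr{\{0,1\}^n}{r}$ (with the number of summands growing with $m$) and then pass to the colimit $m\to\infty$. For the finite computation, the main tool would be discrete Morse theory. One extends the matching $\mu$ of Section~\ref{sec:vr2}, built from the anti-lexicographic order on $\mathcal{G}_m^n$, to arbitrary scale $r$: pair each simplex $\sigma$ with $\sigma\cup\{v\}$ for the smallest admissible vertex $v$. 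The aim is to characterize the critical cells combinatorially and show that they can occur only in those dimensions $i$ for which $\tilde{H}_i(\vr{\{0,1\}^n}{r};\Z)\neq 0$, with the count of critical cells in each such dimension growing as expected with $m$. A retraction $\Delta_m^{n,r}\to\vr{\{0,1\}^n}{r}$ analogous to Proposition~\ref{prop:retract} would supply matching lower bounds on the relevant homology ranks.

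The principal obstacle is the analysis of critical cells for general $r$, which for $r=2$ depended crucially on the classification of maximal simplices in Lemma~\ref{thm:maximal2}. No such clean classification is presently available for $r\geq 3$; already in the cubical setting $r=3$ requires two distinct sphere dimensions (see \cite{Ziqin2024}), so the case analysis should be substantially more intricate. To sidestep this, one could first apply the vertex-reduction lemmas of Section~\ref{sec:vr1} (Lemmas~\ref{r/2}, \ref{xi+xj}, \ref{lessthanfloor}, \ref{sumofthree}, \ref{sumoffour}, \ref{sumoffour2}) to cut each link $\Gamma_n^{\alpha,r}$ down to a bounded-range subcomplex whose maximal simplices admit a tractable description, and only then invoke discrete Morse theory. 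A complementary geometric route is a Mayer--Vietoris spectral sequence associated to the cover of $\Z^n$ by overlapping translates of $\{0,1,\ldots,M\}^n$ for $M\gg r$; the hard step there would be to verify sufficient connectivity of the pairwise (and higher) intersections of such bricks, which is essentially a local version of the conjecture itself.
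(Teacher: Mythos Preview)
This statement is Conjecture~\ref{conj:homoloyof_lattices_and_cube}, which the paper poses as an \emph{open problem}; there is no proof in the paper to compare against. The only accompanying remark is that Conjecture~\ref{conj:homoloyof_lattices_and_cube} implies Conjecture~\ref{conj:main} (via simple connectivity and Whitehead's theorem), so a proof would in particular settle the paper's main open question.

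Your proposal is, as you yourself say, a research outline rather than a proof, and the obstacle you name is the right one: the discrete Morse argument for $r=2$ rests entirely on the explicit classification of maximal simplices in Lemma~\ref{thm:maximal2}, for which no analogue is known when $r\ge 3$. Two further points are worth flagging. First, the reduction lemmas from Section~\ref{sec:vr1} that you propose to invoke---specifically Lemmas~\ref{sumofthree}, \ref{sumoffour}, \ref{sumoffour2}---all carry the hypothesis $r\ge n$, so they are unavailable precisely in the range $r<n$ where the conjecture has content; only Lemmas~\ref{r/2} through~\ref{lessthanfloor} survive for small $r$, and those alone do not cut the link down far enough to make the maximal simplices tractable. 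Second, your strengthened target, that $\vr{\Z^n}{r}$ be a countable wedge of copies of $\vr{\{0,1\}^n}{r}$, is strictly stronger than the conjecture and may be false even if the conjecture holds: it would force the entire homotopy type of the lattice complex to be governed by a single finite model, whereas the conjecture only compares which reduced homology groups vanish. In short, the plan does not yet constitute a proof, and it aims at a statement that is likely harder than what is actually being asked.
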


Since the complexes $\vr{\Z^n}{r}$ are simply connected for $r \geq 2$ (Theorem~\ref{thm:simplyconnected_intro}), and  the complexes $\vr{\{0, 1\}^n}{r}$ are contractible  for $r \geq n$ (they are just a simplex), the Conjecture \ref{conj:homoloyof_lattices_and_cube} implies Conjecture \ref{conj:main}.
\section{Acknowledgement}

The authors would like to thank Henry Adams and Anurag Singh for conversations related to Section \ref{sec:vr2}.

 {

\bibliographystyle{alpha}

%
%
%
%

\end{document}